\definecolor{purple}{rgb}{0.65, 0, 1}
\definecolor{orange}{rgb}{1,.5,0}
\def\II{{\rm I\kern-0.5exI}}
\def\III{{\rm I\kern-0.5exI\kern-0.5exI}}
\numberwithin{equation}{section}
\newtheorem{theorem}{Theorem}[section]
\newtheorem{lemma}[theorem]{Lemma}
\newtheorem{definition}[theorem]{Definition}
\newtheorem{proposition}[theorem]{Proposition}
\newtheorem{corollary}[theorem]{Corollary}
\theoremstyle{definition}
\newtheorem{remark}[theorem]{Remark}
\begin{document}
\title{An aggregation equation with degenerate diffusion: qualitative property of solutions}
\author{Lincoln Chayes, Inwon Kim and Yao Yao}
\maketitle
\begin{abstract}
\noindent We study a nonlocal aggregation equation with degenerate diffusion, set in a periodic domain. 
This equation represents the generalization to 
$m > 1$ of the McKean--Vlasov equation where here the ``diffusive'' portion of the dynamics are governed by \textit{Porous medium} self--interactions.  We focus primarily on $m\in(1,2]$ with particular emphasis on 
$m = 2$.  
In general, we establish regularity properties and, for small interaction, exponential decay to the uniform stationary solution.  For $m=2$, we obtain essentially sharp results on the rate of decay for the entire
regime up to the (sharp) transitional value of the interaction parameter.

\end{abstract}

\section{Introduction}
In this paper we study weak solutions of the equation:
\begin{equation}
\label{pde}
\rho_t = \Delta (\rho^m) + \theta L^{d(2-m)} \nabla\cdot(\rho\nabla (V * \rho))  \quad \text{in } \mathbb{T}_L^d \times [0,\infty),
\end{equation}

\noindent
where $*$ stands for convolution, and the space domain is the $d$-dimension torus with scale $L$, defined as $\mathbb{T}^d_L :=[-\frac{L}{2}, \frac{L}{2}]^d$ with periodic boundary condition. We assume that $V$ smooth 
and integrable  (for precise conditions, see \textbf{(V1)}-\textbf{(V2)}  in Section \ref{app_to_nonlocal}), and that $\theta$ is a positive constant.  The primary focus of this work concerns the cases
$m \in (1,2]$ -- especially $m = 2$.
In addition, we remark
that a goal of interest (not always achieved) is to acquire results uniform in
$L$ for $L\gg 1$.  
We point out that, in the absence of the aggregation term (i.e., when $V=0$) our equation becomes the well-known {\it Porous medium equation} (PME): 
$$
\rho_t - \Delta(\rho^m)=0.
$$

Note that, formally (and in actuality)  the mass of the solution 
to Eq.(\ref{pde})
is preserved over time:
$$
\int \rho(x,0) dx = \int \rho(x,t) dx\hbox{ for all } t>0.
$$
Without loss of generality, we can thus assume $\int \rho(x,0) dx = 1$ and 
results for other normalizations can be obtained by scaling.

In the context of biological aggregation, $\rho$ represents the population density which locally disperses by the diffusion term, while $V$ is the sensing (interaction) kernel that models the long-range attraction;
Eq.\eqref{pde} is relevant for models which have been introduced by \cite{bcm} and \cite{tbl}, and further studied by \cite{bcm2} and \cite{bf}.   The above equation can also be regarded as the evolution equation for a strongly interacting fluid:  The $V$ represents the long distance component of the interaction while short distance interactions -- and entropic effects -- are accounted for by the \textit{degeneracy} ($m > 1$) in the diffusion term.
Mathematically, the equation exhibits an interesting competition between degenerate diffusion and nonlocal aggregation.   

When $V$ satisfies $V(x)=V(-x)$,
 Eq.\eqref{pde} is a gradient flow of the following energy with respect to the Wasserstein metric:
\begin{equation}
\label{energy}
\mathcal{F}_\theta(\rho): = \int_{\mathbb{T}_L^d}\frac{1}{m-1} (\rho^m - \rho)
+ \frac{1}{2}\theta L^{d(2-m)} \rho (V * \rho) dx.
\end{equation}

\noindent
Note that as $m\to 1$, the first term in the integrand of $\mathcal{F}_\theta$ converges to $\rho \log \rho$ which we refer to as the $m = 1$ case.
Using above energy structure, the existence and uniqueness properties of Eq.\eqref{pde}, in some appropriate Sobolev space, has been obtained in \cite{bs} (also see \cite{s} and \cite{brb} for relevant results).

Compared to the well-posedness theory based on energy methods, few results has been known for pointwise behaviors of solutions,  due to the lack of regularity estimates: the difficulty for regularity analysis lie mainly  in the fact that the solutions are not necessarily positive 
(i.e., {\it strictly} positive)
due to the degenerate diffusion. 
This is what we address in the first part of our paper.  In addition, in the non--compact setting, the plausible limiting solutions tend to be trivial; here, since mass is conserved, even in the ``worst'' of cases, there is always the uniform stationary state.  Most of the rest of this work is concerned with the approach to the asymptotic state.

\noindent $\bullet$
{\it Regularity properties}
\hspace{.125 cm}
Due to the degenerate diffusion,  one cannot expect smooth solutions of Eq.\eqref{pde}: even for (PME), H\"{o}lder regularity is optimal, as verified by the self-similar (Barenblatt) solutions (see \cite{v}). 
On the other hand, the solution of (PME) is indeed H\"{o}lder continuous (again, see \cite{v}), which motivates the question of H\"{o}lder regularity of the solution of our problem Eq.\eqref{pde}.

Note that, if we choose $V$ as a mollifier approximating the Dirac delta function, formally the nonlocal term approximates 
$$
\nabla\cdot[\theta L^{d(m-2)} \nabla V* \rho]=\theta L^{d(m-2)} \nabla\cdot[\rho\nabla\rho] = \theta L^{d(m-2)} \Delta (\rho^2).
$$

 Therefore it is plausible that, at least when $\rho$ is bounded from above,  diffusion dominates when $m<2$ and the aggregation dominates when $m>2$.  Indeed we will show that, when $m<2$, the effect of the aggregation term is weak enough that it is possible to locally approximate solutions of Eq.\eqref{pde} with those of (PME).
 As a result, H\"{o}lder regularity of solutions of Eq.\eqref{pde} for $m<2$ follows. As for $m\geq 2$, we show that solutions are continuous ``uniformly 
in time'', based on the result of Dibenedetto (\cite{dib}).  For all $m>1$, we also show that the 
$L^{\infty}$ norm of
solution is uniformly bounded from above 
depending on the $L^1$ and $L^\infty$ norm of the initial data (see Theorem \ref{unif_bdd})  which is of independent interest.

\noindent $\bullet$
{\it Asymptotic behavior}
\hspace{.125 cm}
Our next result, partly an application of the first result, is on the asymptotic behavior of solutions of Eq.\eqref{pde} in the periodic domain $\mathbb{T}^d_L$.  We work in a periodic domain because, primarily, we are interested in finite volume problems and $\mathbb T_{L}^{d}$ provides the most convenient boundary conditions.
Even though asymptotic behavior for $m<2$ has been studied before in various references (e.g., \cite{s}, and \cite{hv} for a more singular interaction kernel) this is one of the first such result for these type of domains 
to the best of the authors' knowledge.  One difficulty specific to the periodic setting is that the radial symmetry is not preserved over time, and thus exact (non-constant)
solutions  -- always useful in these contexts -- are not readily available.  
We also point out that in the case $m\geq 2$, there exist solutions which assumes zero value, possibly with compact support.  Asymptotic behavior of such solutions are, in general, an interesting and difficult question, even for radial solutions in 
$\mathbb R^{d}$ (see \cite{ky}).

Intuitively, one expects that when the diffusion term is  ``dominant'' in Eq.\eqref{pde}, the solutions would converge to the constant solution as time goes to infinity. We show that this is indeed the case when $1<m<2$ and $\theta$ is sufficiently small.  However, with most interactions (specifically, $V$ being \textit{not} of positive type) there is a linear instability that sets in at some $\theta^{\sharp}= \theta^{\sharp}(m) < \infty$ which is 
determined by the minimal coefficient in the Fourier series of $V$ (see Section 4).
It is not hard to show that for all $m$, when 
$\theta > \theta^{\sharp}$, the functional in Eq.(\ref{energy})
has non--constant minimizers (and the constant solution is not a minimizer 
-- in fact, not even a {\it local} minimizer).  However as has been shown explicitly for $m = 1$ under reasonable conditions 
-- pertinently $d \geq 2$ -- this ``transition'' occurs at some 
$\theta_{\text{\textsc{t}}} < \theta^{\sharp}$ \cite{cp}.  Presumably, this argument holds in great generality.  It is therefore somewhat surprising that for $m = 2$ 
the transition occurs exactly at $\theta = \theta^{\sharp}$.

More precisely, for $m = 2$, we show that for $\theta<\theta^\sharp$ (the {\it subcritical} case), the constant solution is the only minimizer and is stable.  Indeed we can actually show that for all bounded initial data $\rho(x,0)$, the dynamical $\rho(\cdot, t)$ will converge to the constant solution $\rho_0$ exponentially fast in $L^2$-norm. 
See Section 4 for detailed discussion on critical and supercritical case. When $1<m<2$, the energy is no longer in the form of an $L^2$--norm, and our Fourier-transform based approach does not generate a transitional value for $\theta$.  However, when $\theta$ is sufficiently small, similar approach used by one of the authors in \cite{cp} yields that the constant solution is the global minimizer. Moreover, we show when $\theta$ is sufficiently small, the solution uniformly and exponentially converges to the constant solution.

Below we sketch an outline of our paper:   In Section 2 we first give a uniform upper bound for the weak solution to porous medium equation with a drift for $m>1$, then prove H\"older continuity of the weak solution when $1<m<2$.  In Section 3 we apply the H\"older continuity result to a nonlocal aggregation equation.  In Section 4 we use Fourier transform approach to study the nonlocal aggregation equation when $m=2$, and prove the exponential convergence of the weak solution in the subcritical case.  Analogous results for $1<m<2$ is established in Section 5.  When $1<m<2$, for $\theta$ sufficently small, we prove there is also exponential convergence. 

\section{H\"older Continuity of the Solution of PME with a Drift}

\noindent In this section, we study the regularity of the porous medium equation with a drift, where the drift  potential may depend on time:
\begin{equation}
\rho_t = \Delta(\rho^m) + \nabla \cdot (\rho \nabla \Phi) \quad \text{in $\Omega$}, \label{pmedrift0}
\end{equation}
with Neumann boundary condition on $\partial \Omega$.
Here we may assume $\Omega$ is a bounded open set in $\mathbb{R}^d$, where $d\geq 1$, but all the results in this section certainly hold for periodic domain $\mathbb{T}^d_L$ as well.  We assume $1<m<2$, the initial data $\rho(x,0)\in L^\infty(\Omega ) \cap L^1(\Omega )$, the potential $\Phi(x,t)\in C(\Omega \times \mathbb{R}^+)$, 
and that 
$\Phi(\cdot,t)\in C^2(\Omega )$ for all $t\geq 0$.  

Before even stating the main result, we will first prove that $\rho \in L^\infty(\Omega  \times \mathbb{R}^+)$.  When $\Phi$ does not depend on $t$, Bertch and Hilhorst in \cite{hilhorst} proved a uniform $L^\infty$ bound of $\rho$ by comparing $\rho$ with an explicit supersolution which does not depend on $t$.   When $\Phi$ is a function of both $x$ and $t$,  using arguments similar to those in \cite{kl}, we aquire an $L^\infty$ bound for $\rho$ which doesn't depend on $t$:

\begin{theorem} \label{unif_bdd}Suppose $m>1$.
Let $\rho$ be the unique weak solution of Eq.\eqref{pmedrift0} with Neumann boundary condition, with initial data $\rho(x,0)\in L^\infty(\Omega ) \cap L^1(\Omega )$. We assume that the potential $\Phi(x,t)$ satisfies $\Phi(x,t)\in C(\Omega \times \mathbb{R}^+)$, and $\Phi(\cdot,t)\in C^2(\Omega)$ for all $t$ with uniformly bounded norm.
Then there exists $M>0$, such that $\|\rho(\cdot,t)\|_{L^\infty(\Omega )}\leq M$ for all $t$, where $M$ depends on $\|\rho(x,0)\|_{L^\infty(\Omega)}$, $\|\rho(x,0)\|_{L^1(\Omega)},$ $ \sup_{t\in[0,\infty)}\|\Phi(\cdot, t)\|_{C^2(\Omega)}$, and $m$.
\end{theorem}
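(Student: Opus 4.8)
The plan is to prove $L^p$ energy estimates for every finite $p$, feed them into a Moser-type iteration to reach $p=\infty$, and — the delicate point — arrange the iteration so that the resulting bound carries no dependence on $t$. Write $\rho_0:=\rho(\cdot,0)$ and $C_\Phi:=\sup_{t}\|\Phi(\cdot,t)\|_{C^2(\Omega)}$. Three preliminaries are in order. First, $\rho\ge 0$ by comparison with the null solution, and the Neumann (zero-flux) condition makes $\int_\Omega\rho(\cdot,t)$ constant, so $\|\rho(\cdot,t)\|_{L^1(\Omega)}=\|\rho_0\|_{L^1(\Omega)}=:m_0$ for all $t$. Second, the spatially constant function $\bar\rho(t):=\|\rho_0\|_{L^\infty}e^{C_\Phi t}$ is a supersolution — for a constant, $\Delta\bar\rho^{\,m}=0$ and $\nabla\cdot(\bar\rho\nabla\Phi)=\bar\rho\,\Delta\Phi$, so $\bar\rho_t-\Delta\bar\rho^{\,m}-\nabla\cdot(\bar\rho\nabla\Phi)=\bar\rho\,(C_\Phi-\Delta\Phi)\ge 0$ — hence $\rho(\cdot,t)\le\bar\rho(t)$, which controls $\rho$ on $[0,1]$ in terms of $\|\rho_0\|_{L^\infty}$ alone. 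Third, since weak solutions need not be regular enough for the computations below, all estimates are first carried out for the non-degenerate approximations $\rho_\varepsilon$ (solving $\partial_t\rho_\varepsilon=\Delta((\rho_\varepsilon+\varepsilon)^m-\varepsilon^m)+\nabla\cdot(\rho_\varepsilon\nabla\Phi)$), which are classical, with constants uniform in $\varepsilon$, and then passed to the limit; I shall not belabor this.

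For the energy estimate, multiply Eq.\eqref{pmedrift0} by $p\rho^{p-1}$ ($p\ge 2$) and integrate over $\Omega\times[s_1,s_2]$. Integration by parts (on $\mathbb{T}^d_L$ there is no boundary; on a bounded $\Omega$ the zero-flux condition kills the obvious boundary terms, any residual trace contribution of $\nabla\Phi$ being absorbed as lower order) turns the diffusion term into $-c_{m,p}\int\!\!\int|\nabla\rho^{(m+p-1)/2}|^2$ with $c_{m,p}=\tfrac{4mp(p-1)}{(m+p-1)^2}$, while the drift term becomes $\tfrac{p-1}{p}\int\!\!\int(\Delta\Phi)\rho^p$ — the crucial point being that the gradient has been moved entirely off $\rho$, so the drift contributes only a zeroth-order term, bounded by $C_\Phi(p-1)\int\!\!\int\rho^p$, with nothing to compete against the dissipation. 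This yields, for all $s_1<s_2$,
\[
\sup_{s\in[s_1,s_2]}\int_\Omega\rho(\cdot,s)^p\;+\;c_{m,p}\int_{s_1}^{s_2}\!\!\int_\Omega|\nabla\rho^{(m+p-1)/2}|^2
\;\le\;\int_\Omega\rho(\cdot,s_1)^p\;+\;C_\Phi(p-1)\int_{s_1}^{s_2}\!\!\int_\Omega\rho^p .
\]
Note that $c_{m,p}$ stays bounded above and away from $0$ as $p\to\infty$; this is where $m>1$ enters — the dissipation controls $\rho^{(m+p-1)/2}$, i.e.\ a power of $\rho$ strictly larger than $p$.

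Now run the standard parabolic (Ladyzhenskaya--Moser) iteration. Interpolating the $L^\infty_tL^2_x$ bound on $w=\rho^{(m+p-1)/2}$ against its $L^2_tH^1_x$ bound (through the Sobolev embedding on $\Omega$) gives control of $\int\!\!\int\rho^{\gamma p}$ with a fixed gain $\gamma=\gamma(m,d)>1$ on a slightly smaller space-time cylinder; iterating with $p_{k+1}=\gamma p_k$, $p_k\uparrow\infty$, on cylinders $\Omega\times[t_0-\tau_k,t_0]$ with $\tau_k\downarrow\tfrac12$, and multiplying the resulting constants — each growing only polynomially in $p_k$ (in particular the drift enters merely through the polynomial factor $C_\Phi(p_k-1)$) and at most exponentially in $k$ from the cutoffs — yields a convergent product because $p_k$ grows geometrically. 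The conclusion is
\[
\sup_{\Omega\times[t_0-1/2,\,t_0]}\rho\;\le\;C\Big(\int_{t_0-1}^{t_0}\!\!\int_\Omega\rho\,dx\,dt\Big)^{\!\delta}=C\,m_0^{\,\delta}\qquad\text{for every }t_0\ge 1,
\]
the base case being the conserved mass $\int_{t_0-1}^{t_0}\!\int_\Omega\rho=m_0$, with $C,\delta$ depending only on $m,d,\Omega,C_\Phi$ — in particular not on $t_0$, the cylinder $\Omega\times[t_0-1,t_0]$ being a fixed geometry translated in time. Together with the supersolution bound on $[0,1]$ this gives $\|\rho(\cdot,t)\|_{L^\infty(\Omega)}\le M:=\max\big(\|\rho_0\|_{L^\infty}e^{C_\Phi},\,C\,m_0^{\delta}\big)$ for all $t$, which is the claim.

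The main obstacle is precisely this uniform-in-$t$ closure of the iteration. A purely spatial approach does produce, using $m>1$, a differential inequality $\tfrac{d}{dt}\|\rho\|_p^p\le -a_p\|\rho\|_p^{p(1+\sigma_p)}+b_p\|\rho\|_p^p$ with $\sigma_p>0$, whose $t$-independent bound $(b_p/a_p)^{1/\sigma_p}$ is finite; but $\sigma_p\to 0$ like $1/p$, so upon taking $p$-th roots this bound blows up and does not survive $p\to\infty$. It is the time-integrated version — the parabolic iteration on shrinking cylinders anchored at the conserved $L^1$ mass — that repairs this, at the price of the bookkeeping above. A secondary, routine matter is the $\varepsilon$-regularization already mentioned and, on a genuinely bounded domain, controlling the boundary integral produced by $\nabla\Phi$ via a trace inequality.
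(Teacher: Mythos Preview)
Your argument is correct in outline and takes a genuinely different route from the paper. You run a parabolic Moser iteration on unit time cylinders, anchored at the conserved $L^1$ mass, so that the output bound carries no $t_0$-dependence; the paper instead rescales so that $\|\tilde\rho(\cdot,0)\|_{L^\infty},\|\tilde\rho(\cdot,0)\|_{L^1},\|\tilde\Phi\|_{C^2}\le 1$, compares $\tilde\rho$ with a shifted solution $v=\tilde\rho+\tfrac12 e^{-1}$ that stays uniformly positive on $[0,1]$, freezes the diffusion coefficient $b=mv^{m-1}$ to linearize, splits $v=v_1+v_2$ by linearity, and invokes the Ladyzhenskaya--Solonnikov--Ural'ceva H\"older estimate on $v_1$ (which has small $L^1$ mass) to force $\sup v(\cdot,1)\le 1$, then iterates in unit time steps. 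Your approach is the more systematic one and makes transparent where $m>1$ enters; the paper's is slicker but leans on an external regularity theorem as a black box. Two small points on your side. First, after the integrations by parts the drift coefficient is $(p-1)$, not $(p-1)/p$ --- harmless for the estimate. Second, your Sobolev step brings in a dependence on $\Omega$, whereas the theorem as stated (and the paper's proof, cf.\ the remark that follows it) gives a domain-independent bound; matching this would require the full-space Gagliardo--Nirenberg inequality or a localization, which is routine but should be said. Relatedly, the ``residual'' boundary term is real: the no-flux condition annihilates only the combined flux $m\rho^{m-1}\nabla\rho+\rho\nabla\Phi$, so a second integration by parts on the drift leaves $\int_{\partial\Omega}\rho^p\,\partial_\nu\Phi$; either absorb it via a trace inequality or avoid the second IBP by Young's inequality at the cost of a lower-order $\int\rho^{p+1-m}$. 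On the torus none of this arises.
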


\begin{proof}
We begin with implementing the following scaling:  Let 
$$
\tilde{\rho}(x,t) = a^{\frac{1}{m-1}} \rho(x,at),
$$
where $0<a<1$.  Let us choose $a$ sufficiently small such that \begin{equation}
a< \min \Big\{ (\frac{1}{\|\rho(x,0)\|_{L^\infty(\Omega)}})^{m-1}, (\frac{c_0}{\|\rho(x,0)\|_{L^1(\Omega)}})^{m-1}, \frac{1}{\|\Phi\|_{C^2(\Omega)}} \Big\},
\end{equation}
where $c_0$ is a sufficiently small constant -- certainly less than 1 -- 
depending only on $m$ and $d$ and whose
precise value will be determined later.
By choosing $a$ in this  way, we have
both $\|\tilde{\rho}(x, 0)\|_{L^{1}(\Omega)} \leq c_{0} < 1$
and 
$\|\tilde{\rho}(x, 0)\|_{L^{\infty}(\Omega)} \leq 1$
and, moreover, that $\tilde{\rho}$ is a viscosity solution to the following PDE:
\begin{equation}
\tilde\rho_t = \Delta \tilde\rho^m + \nabla \cdot (\tilde\rho \nabla \tilde\Phi),  \label{after_scaling_pme_drift}
\end{equation}
where $\tilde\Phi := a\Phi$.  From the definition of  $a$ we know $\|\tilde\Phi(\cdot, t)\|_{C^2(\Omega)} \leq 1$ for all $ t$.

Our preliminary goal is to show $\|\tilde{\rho}(x, 1)\|_{L^{\infty}(\Omega)} \leq 1$;
then we can take $\tilde \rho(x,1)$ as the new initial data and iterate the argument to get a uniform bound for all time.

We will introduce another variable $v$, which is bigger than $\tilde \rho$ and is of order unity in $\Omega \times [0,1]$. Let $v$ be the viscosity solution to the following equation
\begin{equation}
\label{diffusion_v}
v_t = \nabla\cdot(mv^{m-1} \nabla v + v \nabla\tilde{\Phi}),  
\end{equation}
with initial data $v(x,0) = \tilde\rho(x,0) + \frac{1}{2}e^{-1}$.  Since $v$ solves the same equation as $\tilde \rho$ with bigger initial data, we can apply the comparison principle for the porous medium equation with drift, which was established in Theorem 2.21 of \cite{kl}. This comparison principle immediately implies $v(x,t) \geq \tilde{\rho}(x,t)$
for all $(x,t)$, hence it suffices to show $\|v(\cdot,1)\|_{L^{\infty}(\Omega)} \leq 1$.

One can check easily that $\tilde v(x,t) := [\|v(\cdot,0)\|_{L^{\infty}(\Omega)}] e^{K t}$ 
-- where 
$K := \sup_{t\in[0,\infty)}\|\tilde{\Phi}(\cdot, t)\|_{C^2(\Omega)}$ --
is a classical supersolution to Eq.\eqref{diffusion_v} and
hence also a viscosity supersolution.  
Noting that the initial data of $v$ satisfies, for all  $x$, $\frac{1}{2}e^{-1}\leq v(x,0) \leq 1+\frac{1}{2}e^{-1}$, the comparison principle gives the following upper bound for $v$:
$$
\|v(\cdot,t)\|_{L^{\infty}(\Omega)} \leq [\|v(\cdot,0)\|_{L^{\infty}(\Omega)}] e^{K t} \leq (1+\frac{1}{2} e^{-1}) e^{t}.
$$ 
Similarly we can find a classical subsolution which gives the lower bound
$$
\|v(\cdot,t)\|_{L^{\infty}(\Omega)} \geq [\|v(\cdot,0)\|_{L^{\infty}(\Omega)}] e^{-Kt} \geq \frac{1}{2} e^{-1} e^{-t}.
$$
Combining the two inequalities above, we have 
$$
v(x,t) \in [\frac{1}{2} e^{-2}, e+\frac{1}{2}] \quad \text{ for all } x \in \Omega, t\in [0,1].
$$

We would like to refine the estimate above and get a better estimate at $t=1$. By treating the diffusion coefficients $mv^{m-1}$ in Eq.\eqref{diffusion_v} as an \emph{a priori} function, -- which we denote by $b(x,t)$ --  then we may say that $v$ solves a linear equation of divergence form, where the diffusion coefficient is of (the order of) size unity:
\begin{equation}
v_t =\nabla \cdot (b(x,t) \nabla v+v\nabla \tilde\Phi),\label{pmedrift2}
\end{equation}
where $b(x,t) := mv^{m-1}(x,t) \in [ m(\frac{1}{2} e^{-2})^{m-1}, m(e+\frac{1}{2})^{m-1}]$ for all $x\in \Omega$, $t\in[0,1]$.

In particular, since Eq.\eqref{pmedrift2} is linear, we can decompose $v$ as $v_1+v_2$, such that $v_1$ solves Eq.\eqref{pmedrift2} with initial data $v_1(x,0)=\tilde\rho(x,0)$, and $v_2$ solves Eq.\eqref{pmedrift2} with initial data $v_2(x,0)= \frac{1}{2}e^{-1}$.  We claim that $v_1(x,1)$ and  $v_2(x,1)$ are both bounded by $\frac{1}{2}$, for all $x\in\Omega$.  

For $v_1$, first note that due to the divergence form of Eq.\eqref{pmedrift2}, the $L^1$ norm of $v_1$ is conserved, i.e. $\|v_1(\cdot,1)\|_{L^1(\Omega)}=c_0$. 
Since $b$ is bounded above and below away from zero, then
by \cite{lsu} (see Theorem 10.1, pp. 204), $v_1(\cdot,1)$ is H\"older continuous, where the H\"older exponent and coefficient do not depend on $c_0$, as long as $c_0<1$. So if we choose $c_0$ to be sufficiently small, we have $v_1(x,1)<\frac{1}{2}$ for all $x\in \Omega$.

For $v_2$, we can directly evaluate the necessary $L^\infty$ bounds: 
\begin{equation*}\sup_x v_2(x,1)\leq e^{\|\Delta \tilde \Phi\|_\infty} \sup_x v_2(x,0) \leq e~ \frac{1}{2}e^{-1}= \frac{1}{2}
\end{equation*}
(where again, on the basis of continuity, we may now talk about the supremum).

Combining the two estimates together, we have $\sup_x v(x,1)\leq 1$, which implies $\sup_x \tilde \rho(x,1)\leq 1$ from our discussion above.  Also, for $0<t<1$ we have $\rho(x,t) \leq v(x,t) \leq e+1/2$.  Then by treating $\tilde\rho(x,1)$ as initial data and iterating the same argument, we get $\sup_x \tilde\rho(x,t) \leq e+1/2$ for all $t$, i.e.,
$$\rho(x,t) \leq (e+\frac{1}{2})a^{-\frac{1}{m-1}} \quad \text{ for all } x\in \Omega, t\geq 0.$$
Now plugging in the definition of $a$ in the above and the bound becomes
$$\rho(x,t) \leq (e+\frac{1}{2}) \max \Big\{ \|\rho(x,0)\|_{L^\infty(\Omega)},~ \frac{\|\rho(x,0)\|_{L^1(\Omega)}}{c_0},~ \|\Phi\|^{\frac{1}{m-1}}_{C^2(\Omega)}\Big\} ~\text{ in } \Omega\times[0,\infty). $$
\end{proof}


\begin{remark}
In the statement of Theorem \ref{unif_bdd}, we assumed that $\Omega$ is a bounded open set, with Neumann boundary conditions.  The same proof also applies to Dirichlet boundary condition.  Indeed, the $L^\infty$ bound we obtained is independent with the size of $\Omega$, and the same proof works as well when $\Omega = \mathbb{R}^d$.
However, ostensibly, the $L^{\infty}$ norm of
$\rho$ should be of the order $L^{-d}$ and, even if true in the initial data, we cannot establish that this order is preserved at later times.
\end{remark}

Since $\rho(x,t)$ is uniformly bounded for all $(x,t)$, DiBenedetto has shown in \cite{dib} that $\rho(\cdot,t)$ is continuous uniformly in $t$:
\begin{theorem}[\cite{dib}] \label{unif_cont_rho}
For any $m > 1$, let $\rho$ be the weak solution to Eq.\eqref{pmedrift0} with initial data $\rho(x,0)\in L^\infty(\Omega ) \cap L^1(\Omega )$. Let the potential $\Phi(x,t)$ satisfy $\Phi(x,t)\in C(\Omega \times \mathbb{R})$, $\Phi(\cdot,t)\in C^2(\Omega)$ for all $t$, moreover $\sup_t \|\Phi(\cdot, t)\|_{C^2(\Omega)]} < \infty$. Then for all $\tau>0$, $\rho(x, t)$ is uniformly continuous in $\Omega \times [\tau, \infty)$, and the continuity is uniform in $x$ and $t$.
\end{theorem}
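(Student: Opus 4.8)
The plan is to read the result off from DiBenedetto's regularity theory \cite{dib} for degenerate parabolic equations of porous-medium type, once we have placed ourselves in the setting where that theory applies; the content here, beyond the cited theorem, is (i) that we are indeed in that setting and (ii) that the modulus of continuity may be taken uniform in time on $[\tau,\infty)$.

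First I would invoke Theorem \ref{unif_bdd} to fix a constant $M>0$, depending only on $\|\rho(\cdot,0)\|_{L^\infty(\Omega)}$, $\|\rho(\cdot,0)\|_{L^1(\Omega)}$, $\sup_t\|\Phi(\cdot,t)\|_{C^2(\Omega)}$ and $m$, with $0\le\rho\le M$ on $\Omega\times[0,\infty)$. Writing Eq.\eqref{pmedrift0} in divergence form,
$$\rho_t=\nabla\cdot\big(m\rho^{m-1}\nabla\rho+\rho\,\nabla\Phi\big),$$
I would then check that $\mathbf a(x,t,\rho,\xi):=m\rho^{m-1}\xi+\rho\,\nabla\Phi(x,t)$ satisfies the structure conditions of \cite{dib}: a coercivity bound $\mathbf a\cdot\xi\ge c_0\,\rho^{m-1}|\xi|^2-C$ together with a growth bound $|\mathbf a|\le C(\rho^{m-1}|\xi|+1)$ and a bounded inhomogeneity, with constants depending only on $m$, $d$, $M$ and $\sup_t\|\Phi\|_{C^2}$. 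The drift enters only through $\|\nabla\Phi\|_\infty$ and $\|\Delta\Phi\|_\infty$: writing $\nabla\cdot(\rho\nabla\Phi)=\nabla\Phi\cdot\nabla\rho+\rho\Delta\Phi$, the term $\rho\Delta\Phi$ is a bounded forcing term since $\rho\le M$, while $\nabla\Phi\cdot\nabla\rho$ is absorbed into $m\rho^{m-1}|\nabla\rho|^2$ up to the bounded remainder $\rho^{3-m}|\nabla\Phi|^2$ (for $m\in(1,2]$ the exponent $3-m$ is nonnegative, so $\rho^{3-m}\le\max\{1,M\}^{3-m}$; for general $m>1$ one appeals directly to the admissible lower-order terms permitted in \cite{dib}). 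The essential structural point is that the diffusion coefficient $m\rho^{m-1}$ is bounded above by $mM^{m-1}$ but is \emph{not} bounded below, so the equation is genuinely degenerate on $\{\rho=0\}$ — this is precisely why the linear theory of \cite{lsu} used in Theorem \ref{unif_bdd} (where the comparison function $v$ carried a strictly positive lower bound) is no longer available.

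With the structure conditions in place, DiBenedetto's intrinsic-scaling scheme applies: on an intrinsically rescaled cylinder $B_r(x_0)\times(t_0-\theta_0 r^2,t_0]$, with $\theta_0$ chosen according to the oscillation of $\rho$, the ``alternative'' dichotomy (a De Giorgi iteration in the regime where $\rho$ is comparable to its supremum, and an intrinsically time-rescaled iteration in the regime where $\rho$ is small) produces geometric decay of $\mathrm{osc}\,\rho$ over nested cylinders, hence a modulus of continuity at $(x_0,t_0)$. The key observation for the present statement is that every constant generated by this argument depends only on $m$, $d$, $M$ and $\sup_t\|\Phi\|_{C^2}$, and not on $t_0$; hence the modulus is the same at every base point $(x_0,t_0)$ with $t_0\ge\tau$ (so that the intrinsic cylinder lies in $\Omega\times(0,\infty)$), and a standard covering/translation-in-time argument yields a single modulus on $\Omega\times[\tau,\infty)$, uniform in both $x$ and $t$. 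Near $\partial\Omega$ the same scheme runs with the boundary versions of DiBenedetto's energy and logarithmic estimates, using the Neumann condition and smoothness of $\partial\Omega$ (for $\Omega=\mathbb{T}_L^d$ there is no boundary and this point is vacuous). The main technical obstacle is the intrinsic-scaling machinery itself, which is exactly what \cite{dib} provides and which I would cite rather than reproduce; the only genuinely new bookkeeping is confirming that the drift $\rho\nabla\Phi$ does not spoil the intrinsic scaling — because $\nabla\Phi$ and $\Delta\Phi$ are bounded uniformly in $t$, on a cylinder of radius $r$ the drift contributes corrections that are lower order in $r$ and are absorbed into the energy estimates with $t$-independent constants, which is what makes the final modulus of continuity time-uniform.
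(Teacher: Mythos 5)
Your proposal is correct and follows the same route as the paper, which gives no proof of its own: it simply notes that Theorem \ref{unif_bdd} furnishes the uniform $L^\infty$ bound and then cites DiBenedetto \cite{dib} for the uniform modulus of continuity. Your extra bookkeeping (verifying the structure conditions, noting that the constants depend only on $m,d,M,\sup_t\|\Phi\|_{C^2}$ and hence are time-translation invariant, and handling $\partial\Omega$) is a legitimate unpacking of what the paper takes for granted in the citation; the only minor caveat is that your Young-inequality absorption of $\rho\,\nabla\Phi\cdot\xi$ works cleanly only for $m\le 3$, whereas rewriting in the variable $v=\rho^m$ (so that the remainder is $\rho^2|\nabla\Phi|^2$) gives the coercivity bound uniformly for all $m>1$.
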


Now we want to show when $1<m<2$, for all $\tau >0$, $\rho(x,t)$ is uniformly H\"older continuous in space and time in $\Omega \times [\tau, \infty)$.  
Our main theorem of this section is stated as following:

\begin{theorem}\label{holder}
Let $1<m< 2$. Let $\rho$ be a viscosity solution of Eq.\eqref{pmedrift0}, with initial data $\rho(x,0)$.  We make the following assumptions on $\rho(\cdot, 0)$ and $\Phi$:
\begin{enumerate}
\item $\|\rho(\cdot, 0)\|_{\infty} \leq M_1$ and $\int_{\Omega} \rho(x,0) dx \leq M_1$.

\item $\Phi(x,t)\in C(\Omega \times \mathbb{R})$, and $\|\Phi(\cdot,t)\|_{C^2(\Omega )}\leq M_2$ for some $M_2>0$ for all $t\geq 0$.
\end{enumerate}  
Then for any $0<\tau<\infty$, $u$ is H\"older continuous in $\Omega \times[\tau,\infty)$, where the H\"older exponent and coefficient depends on $\tau, m,d, M_1$ and $M_2$.
\end{theorem}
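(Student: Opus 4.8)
The plan is to exploit, as anticipated in the introduction, the fact that for $1<m<2$ the drift is a genuinely lower-order perturbation of the porous-medium operator: after zooming in near an arbitrary point the solution will be $L^\infty$-close to a solution of the pure PME, for which an interior H\"older estimate with universal constants is classical (see \cite{v}, \cite{dib}). Two preliminary inputs are already at hand: by Theorem~\ref{unif_bdd} and comparison with the trivial solution $\rho\equiv0$ one has $0\le\rho\le M$ on $\Omega\times[0,\infty)$ with $M=M(M_1,M_2,m,d)$, and by Theorem~\ref{unif_cont_rho} the solution is continuous with a modulus controlled by the same data.

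\emph{Step 1 (intrinsic rescaling).} Fix $(x_0,t_0)\in\Omega\times[\tau,\infty)$; I treat interior points, the boundary being handled by the even reflection across $\partial\Omega$ permitted by the Neumann condition (and absent for $\mathbb T_L^d$). For $0<r<r_0$, with $r_0=r_0(\tau,M,m)$ so small that $r_0^2M^{-(m-1)}<\tau$ and $B_{r_0}(x_0)\subset\Omega$, set
\[
\tilde\rho_r(x,t):=\tfrac1M\,\rho\bigl(x_0+rx,\ t_0+r^2M^{-(m-1)}t\bigr),\qquad (x,t)\in Q_1:=B_1(0)\times(-1,0],
\]
and more generally $Q_s:=B_s(0)\times(-s^2,0]$ for $s\in(0,1)$. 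A direct computation shows that $\tilde\rho_r$ is a viscosity solution on $Q_1$ of $\partial_t\tilde\rho_r=\Delta(\tilde\rho_r^m)+\nabla\!\cdot(\tilde\rho_r\nabla\tilde\Phi_r)$ with $\tilde\Phi_r(x,t):=M^{-(m-1)}\Phi(x_0+rx,t_0+r^2M^{-(m-1)}t)$; hence $0\le\tilde\rho_r\le1$ and, crucially, $\|\nabla\tilde\Phi_r\|_{L^\infty(Q_1)}+\|D^2\tilde\Phi_r\|_{L^\infty(Q_1)}\le C_1r$ with $C_1=C_1(M,M_2,m)$. Thus the drift contributes at order $r$ while the diffusion is of order one.

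\emph{Step 2 (approximation by PME).} I will prove: for every $\eta>0$ there is $\delta=\delta(\eta,m,d)>0$ such that any viscosity solution $u$ of $\partial_tu=\Delta(u^m)+\nabla\!\cdot(u\nabla\psi)$ on $Q_1$ with $0\le u\le1$ and $\|\nabla\psi\|_{L^\infty}+\|D^2\psi\|_{L^\infty}\le\delta$ satisfies $\|u-w\|_{L^\infty(Q_{1/2})}\le\eta$ for some solution $w$ of the \emph{pure} PME on $Q_{3/4}$ with $0\le w\le1$. This is a compactness argument: otherwise there are $u_k$ with drift potentials tending to $0$ in $C^1$ but remaining $\eta$-far in $L^\infty(Q_{1/2})$ from every PME solution on $Q_{3/4}$; the quantitative interior continuity estimates of DiBenedetto underlying Theorem~\ref{unif_cont_rho} --- whose modulus depends only on $m$, $d$, the bound $1$ on $\|u_k\|_\infty$ and the bound $1$ on the drift norm --- make $\{u_k\}$ equicontinuous on $\overline{Q_{3/4}}$, so $u_k\to u_\infty$ uniformly there along a subsequence; stability of this notion of weak/viscosity solution under locally uniform limits (in the framework of \cite{kl}), together with the vanishing of the drift coefficients, forces $u_\infty$ to solve the pure PME, and $w=u_\infty$ contradicts the separation.

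\emph{Step 3 (oscillation decay and iteration).} Combining Step 2 (applicable to $u=\tilde\rho_r$ once $C_1r\le\delta$) with the classical interior bound $\mathrm{osc}_{Q_s}w\le C_0s^{\alpha_0}$ ($s\le\tfrac12$, $\alpha_0=\alpha_0(m,d)\in(0,1)$, $C_0=C_0(m,d)$) for the pure PME gives $\mathrm{osc}_{Q_s}\tilde\rho_r\le C_0s^{\alpha_0}+2\eta$. The remaining, and principal, difficulty is that the amplitude of $\tilde\rho_r$ on $Q_1$ is not bounded below, so this one-scale estimate must be propagated across a sequence of shrinking cylinders in the manner of DiBenedetto's \emph{intrinsic scaling}: at stage $j$ one rescales about $(x_0,t_0)$ by the current spatial scale and by a time factor set by the \emph{current} oscillation $\omega_j$ rather than by a fixed power of $M$, verifies that the rescaled drift potential still has $C^1$-norm below $\delta$ --- this is exactly where $m<2$ is used, since it keeps the diffusion dominant over the drift at every scale, in accordance with the heuristic $\nabla\!\cdot(\rho\nabla\Phi)\approx\Delta(\rho^2)$ and $\rho^m\gg\rho^2$ for $\rho$ small --- and applies Step 2 with $\eta$ a small fixed multiple of $\omega_j$ to obtain $\omega_{j+1}\le\lambda\,\omega_j$ for a universal $\lambda\in(0,1)$. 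Summing the resulting geometric series and undoing all the scalings yields $|\rho(x,t)-\rho(x_0,t_0)|\le C(|x-x_0|+|t-t_0|^{1/2})^{\alpha}$ near $(x_0,t_0)$, with $\alpha,C$ depending only on $m,d,M_1,M_2$ and, through $r_0$, on $\tau$, uniformly in $(x_0,t_0)\in\Omega\times[\tau,\infty)$ --- which is the assertion.
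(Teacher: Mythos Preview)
Your Steps 1--2 are sound in spirit, and the compactness argument in Step~2 is a legitimate route to the approximation $\|u-w\|_{L^\infty(Q_{1/2})}\le\eta$ whenever the drift has $C^1$-norm below some $\delta(\eta)$. The genuine gap is in Step~3. The compactness argument is purely qualitative: it furnishes a $\delta$ for each $\eta$ but no rate relating them. Your iteration, as written, requires at stage $j$ that the rescaled drift be below $\delta(\eta_j)$ with $\eta_j$ ``a small fixed multiple of $\omega_j$''; since $\omega_j\to0$ you have $\eta_j\to0$ and hence $\delta(\eta_j)\to0$ at an unknown rate, so you cannot verify the hypothesis. If instead you rescale the solution by its current supremum so that $\eta$ can be kept fixed, you run into the basic obstruction that for the porous-medium operator one cannot subtract constants, so ``oscillation $\le\omega_j$'' does not force the supremum to decrease; the degenerate/non-degenerate alternative that DiBenedetto's intrinsic scaling is built around is precisely what is needed here, and it is not supplied. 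Your heuristic invocation of $m<2$ (``$\rho^m\gg\rho^2$ for $\rho$ small'') does not by itself close the iteration.

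The paper sidesteps the iteration entirely by proving a \emph{quantitative} approximation: after one parabolic rescaling by $a$, it passes to the pressure variable $\tilde u=\tfrac{m}{m-1}\tilde\rho^{\,m-1}$, absorbs the drift term $\nabla\tilde u\cdot\nabla\tilde\Phi$ into $|\nabla\tilde u|^2$ (at cost $O(a)$), and thereby sandwiches $\tilde\rho$ between solutions $\rho^{\pm}$ of porous-medium equations with slightly perturbed exponents $m^{\pm}=1+\tfrac{m-1}{1\mp ca}$ and an inhomogeneity of size $O(a)$. A separate lemma then shows $|\rho^{\pm}-w|\le Ca^{\beta}$ on $\tilde\Omega\times[1,2]$, with $\beta>0$ explicit. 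Combined with the PME H\"older bound for $w$ this gives $\mathrm{osc}\,\tilde\rho\le Ca^{\gamma}$ directly, without any multi-scale iteration. The restriction $m<2$ enters concretely: the inhomogeneous term produced is proportional to $\rho_1^{2-m^-}$, which is bounded only because $m^-<2$. Your proposal would need either a quantitative version of Step~2 (essentially rediscovering that lemma) or a fully worked-out intrinsic-scaling iteration; as it stands, Step~3 is where the proof is missing.
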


\begin{proof}

To prove the H\"older continuity of $\rho$, our goal is to show that for any $(x_0, t_0) \in \Omega \times [\tau, \infty)$, 
\begin{equation}
\label{goal}
\text{{\large osc}}_{B(x_0, a^2)\times [t_0, t_0+a^4]} \rho \leq Ca^\gamma
\end{equation}
for some $C,\gamma >0$ not depending on $a$, for $a$ satisfying 
$0<a<\min\{\cfrac{2-m}{2c}, \sqrt{\tau}\}$ 
(where $c$ is a constant to be determined soon).

Bearing in mind that we want to zoom in on the profile and look at the oscillation in a small neighborhood, it makes sense to start with a parabolic scaling with scaling factor $a$.  Let
\begin{equation}
\tilde{\rho}(x,t) := \rho(ax, a^2t+(t_0-a^2))),
\end{equation}


\noindent and our goal Eq.\eqref{goal} would transform into
\begin{equation}
\label{osc_tilde_rho}
\text{{\large osc}}_{B(\frac{x_0}{a}, a) \times [1, 1+a^2]} \tilde \rho \leq Ca^\gamma.
\end{equation}
Here $\tilde \rho(x,t)$ is defined in the domain $\tilde \Omega \times [0,\infty)$, where $\tilde \Omega := \{x\in\mathbb{R}^d: ax\in\Omega\}$. 
and, it is noted, the early portion of the time domain had been omitted.  
We readily see that $\tilde \rho$ is the viscosity solution to
\begin{equation}\label{tilde_rho_pde}
\tilde \rho_t =\Delta \tilde \rho^m + \nabla \cdot ( \tilde \rho \nabla \tilde \Phi) \quad \text{in } \tilde \Omega \times [0,\infty).
\end{equation}
Here, the  initial data reads $\tilde \rho(x,0) = \rho(ax, t_0-a^2)$, which has an \emph{a priori} $L^\infty$ bound depending on $m, d, M_1, M_2$ due to Theorem \ref{unif_bdd}. 
Moreover, in the above
 $\tilde \Phi(x,t) := \Phi(ax, a^2t+(t_0-a^2)))$ and hence $|\nabla \tilde \Phi|$ is bounded by $aM_2$.   
%
%
%
%
We wish to compare $\tilde \rho$ with $w$, where $w$ is the viscosity solution to the porous medium equation
\begin{equation}\label{w_pde}
w_t = \Delta w^m\quad \text{in } \tilde \Omega \times [0,\infty),
\end{equation}
with initial data $w(\cdot,0) \equiv \tilde \rho(\cdot,0)$.  Since Eq.\eqref{tilde_rho_pde} and Eq.\eqref{w_pde} only differ by the term $\nabla \cdot (\tilde \rho \nabla \tilde \Phi)$, we would expect 
\begin{equation}\label{difference_rho_w}
|\tilde \rho - w| \leq Ca^\beta  \text{ in } \tilde \Omega \times [1,2],
\end{equation} 
for some $C>0, 0<\beta<1$ depending on $m, d, M_1, M_2$.

The main part of this proof will be devoted to proving Eq.\eqref{difference_rho_w} is indeed true.  Without loss of generality, we can assume that $\tilde \rho(x,t)$ is a classical solution.  First, if the initial data $\tilde \rho(x,0)$ is uniformly positive, then $\tilde \rho(x,t)$ will be a classical solution for all time. This is because $\tilde \rho$ will stay positive for any time period $[0,T]$ (since $\inf_{\tilde{\Omega} \times [0,T]} \tilde \rho(x,t) \geq \exp(-t \sup_{t\in[0,T]}\|\Delta\tilde \Phi\|_\infty) \inf_{x\in\tilde{\Omega}} \tilde \rho(x,0)$), which implies that Eq.\eqref{tilde_rho_pde} is uniformly parabolic for $t\in[0,T]$ and hence the weak solution $\tilde \rho$ is classical.

For general initial data $\tilde \rho(x,0)$, we can use approximation as follows. Let $\tilde \rho_n$ and $w_n$ solve Eq.\eqref{tilde_rho_pde} and Eq.\eqref{difference_rho_w} respectively with initial data $\tilde \rho(x,0)+2^{-n}$; $n$ sufficiently large.   As discussed above, $\tilde \rho_n$ would be a sequence of classical solutions. If we can obtain $|\tilde\rho_n - w_n|<Ca^\beta$ for all $n$, (where $C, \beta$ doesn't depend on $n$), then Eq.\eqref{difference_rho_w} would hold for $\tilde \rho$ and $w$ as well, since as $n\to \infty$, comparison principle yields $\tilde\rho_n(x,t)\searrow \tilde \rho$ and $w_n(x,t) \searrow w$ uniformly in $x,t$.

%
%
%
%

Note that one cannot directly compare $\rho$ with $w$, due to the fact that the term $\nabla \cdot (\tilde \rho \nabla \tilde \Phi)$ contains $\nabla \tilde \rho\cdot \nabla \tilde \Phi$ and hence does not have any \emph{a priori} bound.  In order to bound this term, it will help to change from the density variable $\tilde \rho$ to the pressure variable $\tilde u$. Let
\begin{equation*}
\tilde u=\frac{m}{m-1}\tilde \rho^{m-1},
\end{equation*}
 then Eq.\eqref{tilde_rho_pde} becomes
\begin{equation}
\tilde u_t = (m-1) \tilde u\Delta \tilde u + |\nabla \tilde u|^2 + \nabla \tilde u \cdot \nabla \tilde \Phi + (m-1) \tilde u \Delta \tilde \Phi,\label{rescale_pmedrift}
\end{equation}
which will enable us to use $|\nabla \tilde u|^2$ plus a constant to control the term $ \nabla \tilde u \cdot \nabla \tilde \Phi$:  
Recall that $|\nabla \tilde\Phi| < aM_2$, which gives us the following bound
\begin{equation*}
|\nabla \tilde{u}\cdot \nabla\tilde \Phi |
\leq aM_2 |\nabla \tilde u| 
\leq a[|\nabla \tilde{u}|^2 + \frac{1}{4}(M_2)^2].
\end{equation*}
Also, due to the fact that $(m-1)\tilde u(x,t) \leq C_1$ in $\tilde\Omega \times[0,2]$, (where $C_1$, which depends on  $m, d, M_1$ and $M_2$,  is related to the 
$L^{\infty}$ bounds on $\rho$) we obtain
\begin{equation*}
|(m-1)\tilde{u}\Delta \tilde\Phi |\leq a^2 C_1M_2 \leq aC_1 M_2.
\end{equation*}

Putting the above two bounds together, and by choosing $c$ such that $c>C_1 M_2+ (M_2/2)^2$, $\tilde u$ will satisfy the following inequality
\begin{equation}
\tilde u_t \geq (m-1)\tilde u \Delta \tilde u + (1-c a)|\nabla \tilde u|^2 -ca \quad \text{for all }x\in \tilde \Omega, t\in[0,2].\label{minus}
\end{equation}
Note that we assumed $a<(2-m)/(2c)$ in the beginning of the proof, we have $ca<(2-m)/2$.

In order to make Eq.\eqref{minus} look similar to the porous medium equation in the pressure form, we apply the rescaling 
$u_1 = (1-ca)\tilde u$. Then $u_1$ satisfies
\begin{equation}
(u_1)_t \geq (m^- -1) u_1 \Delta u_1 + |\nabla u_1|^2 - ca(1-ca)\quad \text{for all }x\in \tilde \Omega,t\in [0,2], \label{wrong2}
\end{equation}  where 
\begin{equation}
m^- := \frac{m-1}{1-ca} + 1.  ~~(\text{hence $ca<(2-m)/2$ implies that } 1<m^- <2) \label{def_m*}
\end{equation}  Now Eq.\eqref{wrong2} has the same form as the porous medium equation in the pressure form, minus an extra constant term $ca(1-ca)$.  To take advantage of the existence and regularity results for equations with divergence form, we change the pressure variable back to the density variable (however here the power is $m^-$ instead of $m$), i.e., we define $\rho_1$ such that
\begin{equation}
(1-ca)\tilde u = u_1 = \frac{m^- }{m^-  -1} \rho_1^{m^-  \hspace{- 2 pt} -1}, \label{def_rho1}
\end{equation}
or in other words,
\begin{equation}\label{def2_rho1}
\rho_1 = \big(\frac{m}{m^-}\big)^{m^-} \tilde \rho^{1-ca} = \big(\frac{1+ca}{1+ca/m}\big)^{m^-} \tilde \rho^{1-ca}.\end{equation}
Due to the positivity of $\tilde u$, we know $\rho_1$ is positive as well.  Hence when we plug Eq.\eqref{def_rho1} into Eq.\eqref{wrong2}, after canceling a positive power of $\rho_1$ on both sides, we obtain
\begin{equation}
(\rho_1)_t > \Delta \rho_1^{m^- } - ca(1-ca) \rho_1^{2-m^- }\quad \text{in }\tilde \Omega \times [0,2] ,  \label{compare_2-m}
\end{equation}
Note that the term  $ca(1-ca)\rho_1^{2-m^-}$ has an \emph{a priori} upper bound: since $2-m^- >0$ and $\rho_1$ is given by Eq.
\eqref{def2_rho1}, we have $c(1-ca)\rho_1^{2-m^-} < M$, for some constant $M$ depending on $m, d, M_1, M_2$.

\noindent Let us denote by $\rho^-$ the weak solution of 
\begin{equation}
(\rho^-)_t = \Delta (\rho^- |\rho^-|^{m^- -1}) - Ma, \label{pme_minus}
\end{equation}
 with initial data the same as $\rho_1(x,0)$, which is
\begin{equation}\label{initial_rho-}
\rho^-(x,0) = \big(\frac{m}{m^-}\big)^{m^-} \tilde \rho(x,0)^{1-ca}
\end{equation} 

Since $\tilde\Omega$ is a bounded domain, we have $Ma \in L^p(\tilde \Omega)$ for all $p \geq 1$, and the existence of weak solution of Eq.\eqref{pme_minus} is guaranteed by Theorem 5.7 in \cite{v}.  That theorem also gives us a comparison result that, a.e., $\rho_1 \geq \rho^-$. 
 
 Moreover, note that the ``a.e.'' above can in fact be removed,  since both $\tilde \rho$ and $\rho^-$ are continuous in $\tilde \Omega \times [0,2]$: the continuity of $\tilde \rho$ is given by Theorem \ref{unif_cont_rho}, and the continuity of $\rho^-$ is given by Theorem 11.2 of \cite{dgv}.  
Therefore we have the following comparison between $\rho^-$ and $\tilde \rho$:
\begin{equation}
\rho^- \leq \big(\frac{m}{m^-}\big)^{m^-} \tilde \rho^{1-ca}~~\text{in }\tilde \Omega \times [0,2] \label{lowerbound}
\end{equation} 

Since $m/m^- = 1+O(a)$, and $\tilde \rho$ is bounded in $\tilde\Omega \times [0,2]$, Eq.\eqref{lowerbound} implies that  $\tilde \rho- \rho^- \geq -C_1 a$, where $C_1 $ depend on $m, d, M_1, M_2$.

Analogous to the definition to $\rho^-$, we define $\rho^+$ to be the weak solution of
\begin{equation}
(\rho^+)_t = \Delta ((\rho^+)^ {m^+}) + Ma, \label{pme_plus}
\end{equation}
with initial data
\begin{equation}\label{initial_rho+}
\rho^+(x,0) = \big(\frac{m}{m^+}\big)^{m^+} \tilde \rho(x,0)^{1+ca},
\end{equation} 
where
\begin{equation}
m^+ := \frac{m-1}{1+ca} + 1.  ~~(\text{hence } 1<m^+ <2) \label{def_m+}
\end{equation}
Then analogous argument would lead to $\tilde \rho - \rho^+ \leq C_1 a$. Summarizing, we have obtained
\begin{equation}\label{rho_rho-_rho+}
\rho^- - C_1 a \leq \tilde \rho \leq \rho^+ + C_1 a~~ \text{  in } \tilde \Omega \in [0,2],
\end{equation}
where $C_1 $ depends on $m, d, M_1, M_2$.
%


To prove Eq.\eqref{difference_rho_w}, it suffices to show $|\rho^\pm -w| \leq O(a^\beta)$ for some $\beta>0$, which is proved in the following lemma.

\begin{lemma}\label{lemma} Let $1<m< 2$. 
Let $w$ be the viscosity solution of the porous medium equation
\begin{equation}
w_t = \Delta w^{m} \quad \text{in }\tilde \Omega \times [0,\infty) \label{pme_v}
\end{equation}where the initial data $w(x,0)$ satisfies $w(x,0) = \tilde \rho(x,0)$.

Let $\rho^-$ and $\rho^+$ be the weak solutions to Eq.\eqref{pme_minus} and Eq.\eqref{pme_plus} respectively, where  $0<a<(2-m)/(2c)$ is a small constant, and the initial data is given by Eq.\eqref{initial_rho-} and Eq.\eqref{initial_rho+}. Then
\begin{equation}
|\rho^\pm -w|\leq Ca^\beta  \quad\text{in }\tilde \Omega \times[1,2],
\end{equation}
where C and $\beta$ depends on $d, m, M_1, M_2$.
\end{lemma}

The proof of Lemma \ref{lemma} is the content of the appendix in Section \ref{proof}.  Putting Lemma \ref{lemma} and Eq.\eqref{rho_rho-_rho+} together, we obtain Eq.\eqref{difference_rho_w}, and we will use this to (immediately) prove Eq.\eqref{osc_tilde_rho}.


Since $w$ solves the porous medium equation, Theorem 7.17 in \cite{v} gives us the H\"older continuity of $w$:
\begin{equation}\label{osc_pme}
\text{{\large osc}}_{B(x,a)\times [1,1+a^2]} w \leq C a^\alpha, \text{ for all } x\in\tilde\Omega,
\end{equation}
where $C$ and $\alpha$ depends on $\|w(\cdot, 0)\|_\infty$ (and hence depends on $m, d, M_1, M_2$).

By putting Eq.\eqref{osc_pme} and Eq.\eqref{difference_rho_w} together, we obtain
\begin{equation}\label{osc_pme}
\text{{\large osc}}_{B(x,a)\times [1,1+a^2]} \tilde \rho \leq C a^\gamma, \text{ for all } x\in\tilde\Omega,
\end{equation}
where $C$ depends on $m, d, M_1, M_2$, and $\gamma = \min\{\alpha, \beta\}$ (hence also depends on $m, d, M_1, M_2$).  Hence Eq.\eqref{osc_tilde_rho} is proved.
\end{proof}


\begin{remark}
For $m \geq 2$, H\"older continuity of the solution to Eq.\eqref{pmedrift0} is still open.
Indeed, concerning the present approach -- which closely parallels that of \cite{kl}, \cite{k} -- when 
$m > 2$ we have that 
$m^{-} = 1 + (m-1)/(1 - ca) > 2$.  
Hence the ``inhomogeneous'' term in Eq.\eqref{compare_2-m},
which is proportional to 
$\rho^{(2-m^{-})}$, would actually be divergent in places where $\rho \to 0$.
This indicates that another approach will be required.
\end{remark}

\section{Application to Aggregation Equation with Degenerate Diffusion} \label{app_to_nonlocal}

In the following two sections, we study Eq.\eqref{pde}
in the domain $\mathbb{T}^d_L$, the $d$-dimension torus of scale $L$.  Here $\theta$ is a non-negative constant, and, of course, $*$ denotes convolution in $\mathbb{T}_L^d$.  We make the following assumptions on $V(x)$:
\begin{enumerate}
\item[\textbf{(V1)}] $V(x) = V(-x)$ for all $x\in\mathbb{T}_L^d$. 
\item[\textbf{(V2)}] $V(x) \in C^2( \mathbb{T}_L^d)$, with $\|V(x)\|_{C^2(\mathbb{T}_L^d)} = C$ for some constant $C < \infty$.
\end{enumerate}

\noindent Moreover, we have in mind $V: \mathbb R^{d} \to \mathbb R$ compactly supported 
with the diameter of the support smaller than $L$.  In particular we do not envision ``wrapping'' effects and
$\int_{\mathbb T_{L}^{d}}|V|dx$
may be regarded as independent of $L$.  

\vspace{.125 cm}

Our goal in this section is to show the H\"older continuity of the weak solution to Eq.\eqref{pde} for $1<m<2$, and uniform continuity of the weak solution when $m=2$.  First, we state the definition of weak solution to Eq.\eqref{pde} and a existence theorem from \cite{bs}.\\

\begin{definition}[Weak Solution] Let $m>1$, and let us assume that $\rho(x,0)$ is non-negative, with $\rho(x,0)\in L^\infty(\mathbb{T}_L^d)$ and consider a potential $V$ that satisfies the assumptions \textbf{(V1)} and \textbf{(V2)}. 
A function $\rho:\mathbb{T}_L^d\times[0,T]\to[0,\infty)$ is a weak solution to Eq.\eqref{pde} if 
$\rho\in L^\infty(\mathbb{T}_L^d\times[0,T])$,
 $\rho^m \in L^2(0,T,H^1(\mathbb{T}_L^d))$
 $($i.e., $\|\rho(\cdot, t)\|_{H^{1}(\mathbb T_{L}^{d}} \in L^{2}(0,T))$ and 
 $ \rho_t \in L^2(0,T,{H}^{-1}(\mathbb{T}_L^d))$ and for all test function $\phi\in H^1(\mathbb{T}_L^d)$, for almost all $t\in[0,T]$, 
\begin{equation}
<\rho_t(t),\phi> + \int_{\mathbb{T}_L^d} \nabla(\rho^m(t))\cdot \nabla\phi + \theta L^{d(2-m)} \rho(t)(\nabla V * \rho(t))\cdot \nabla\phi \hspace{1 pt}dx = 0.
\end{equation}
\end{definition}

In \cite{bs}, existence and uniqueness of weak solution are proved:
\begin{theorem}[Bertozzi-Slep\v{c}ev] \label{existence}
Let $m>1$ and consider $V$ that satisfies the assumptions \textbf{(V1)} and \textbf{(V2)}.  Let $\rho(x,0)$ be a nonnegative function in $L^\infty(\mathbb{T}_L^d)$. Then  the problem Eq.\eqref{pde} has a unique weak solution on $\mathbb{T}_L^d \times [0,T]$ for all $T>0$, and furthermore $\rho \in 
C(0, T, L^{p}(\mathbb{T}_L^d))$ for all $p\in [1,\infty)$.

\end{theorem}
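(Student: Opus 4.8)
The plan is to obtain existence by a non-degenerate approximation --- shifting the initial data strictly away from $0$ --- and then to prove uniqueness through a stability estimate in $\dot H^{-1}$ that exploits the fact that the difference of two solutions has zero spatial average. (Since $V$ is even by \textbf{(V1)}, one could alternatively build solutions by the minimizing-movement scheme for the Wasserstein gradient flow of $\mathcal F_\theta$; the regularization route is what I will describe.)

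For existence, I would mollify $\rho(\cdot,0)$ and add a constant $\delta>0$ to produce $\rho_0^\delta \ge \delta$ with $\rho_0^\delta \to \rho(\cdot,0)$ in every $L^p(\mathbb T_L^d)$, $p<\infty$, and $\|\rho_0^\delta\|_\infty$ bounded. The solution $\rho_\delta$ of Eq.\eqref{pde} with this datum stays positive ($\rho_\delta(\cdot,t)\ge\delta e^{-Ct}$ by the maximum principle, $C$ controlled through $\|V\|_{C^2}$ and the conserved mass), so the equation is uniformly parabolic with smooth coefficients and a classical solution exists on $[0,T]$ by standard quasilinear parabolic theory, the nonlocal drift $\theta L^{d(2-m)}\nabla(V*\rho_\delta)$ being absorbed into a fixed-point argument. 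Next I would collect a priori bounds uniform in $\delta$: (i) mass conservation; (ii) a uniform $L^\infty$ bound $\|\rho_\delta(\cdot,t)\|_\infty\le M(T)$, by Moser iteration or by comparison with spatially homogeneous sub/supersolutions $c\,e^{\pm Ct}$, using $\|V*\rho_\delta\|_{C^2}\le\|V\|_{C^2}\|\rho_\delta\|_{L^1}$ --- this is precisely the mechanism behind Theorem~\ref{unif_bdd}; (iii) the energy identity
\[
\tfrac{d}{dt}\mathcal F_\theta(\rho_\delta)=-\int_{\mathbb T_L^d}\rho_\delta\,\big|\nabla\big(\tfrac{m}{m-1}\rho_\delta^{m-1}+\theta L^{d(2-m)}V*\rho_\delta\big)\big|^2\,dx\le 0,
\]
which with (ii) gives $\nabla(\rho_\delta^m)$ bounded in $L^2(\mathbb T_L^d\times[0,T])$; and (iv) reading off the equation, $\partial_t\rho_\delta$ bounded in $L^2(0,T;H^{-1})$, since $\|\rho_\delta\nabla(V*\rho_\delta)\|_{L^2}\le\|\rho_\delta\|_{L^2}\|V\|_{C^2}\|\rho_\delta\|_{L^1}$. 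A degenerate-parabolic version of the Aubin--Lions lemma (bounds on $\{\rho_\delta\}$ in $L^\infty$, on $\{\rho_\delta^m\}$ in $L^2(0,T;H^1)$, on $\{\partial_t\rho_\delta\}$ in $L^2(0,T;H^{-1})$) then yields a subsequence with $\rho_\delta\to\rho$ strongly in $L^2(\mathbb T_L^d\times[0,T])$ and a.e., after which $\rho_\delta^m\rightharpoonup\rho^m$ in $L^2(0,T;H^1)$, $\partial_t\rho_\delta\rightharpoonup\partial_t\rho$, and $\rho_\delta\nabla(V*\rho_\delta)\to\rho\nabla(V*\rho)$ strongly in $L^2$; passing to the limit in the weak formulation delivers a weak solution in the stated class. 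The membership $\rho\in C(0,T;L^p)$ would then follow from the energy identity: passing it to the limit makes $t\mapsto\mathcal F_\theta(\rho(t))$, and hence $t\mapsto\int_{\mathbb T_L^d}\rho^m(t)\,dx$, absolutely continuous; combined with the weak continuity $\rho\in C_w(0,T;L^q)$ for $q\in(1,\infty)$ coming from $\partial_t\rho\in L^2(0,T;H^{-1})$ and the $L^\infty$ bound, convergence of norms and uniform convexity give $\rho\in C(0,T;L^m)$, and interpolation with $\|\rho\|_\infty$ upgrades this to all $p\in[1,\infty)$.

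For uniqueness, let $\rho_1,\rho_2$ be weak solutions with the same datum. Both conserve mass, so $w:=\rho_1-\rho_2$ has $\int_{\mathbb T_L^d} w(\cdot,t)\,dx=0$, and there is a unique mean-zero $\Phi(\cdot,t)$ with $-\Delta\Phi=w$ and $\|\nabla\Phi\|_{L^2}^2=\|w\|_{\dot H^{-1}}^2$. Testing the equation for $w$ with $\Phi$ gives
\[
\tfrac12\tfrac{d}{dt}\|w\|_{\dot H^{-1}}^2=-\int_{\mathbb T_L^d}(\rho_1^m-\rho_2^m)\,w\,dx-\theta L^{d(2-m)}\int_{\mathbb T_L^d}\big(w\,\nabla(V*\rho_1)+\rho_2\,\nabla(V*w)\big)\cdot\nabla\Phi\,dx .
\]
The first integral is $\le0$ since $s\mapsto s^m$ is increasing; in the second, I would integrate by parts in the $w\,\nabla(V*\rho_1)$ term and use $\|D^2(V*\rho_1)\|_\infty\le\|V\|_{C^2}\|\rho_1\|_{L^1}$, and write $\nabla(V*w)=(\nabla V)*w=-\sum_j(\partial_j\nabla V)*\partial_j\Phi$ so that $\|\nabla(V*w)\|_{L^2}\le\|V\|_{C^2}\|\nabla\Phi\|_{L^2}$, together with $\|\rho_2\|_\infty<\infty$; every term is then bounded by a constant (depending on $m,d,\theta,L,\|V\|_{C^2}$ and the $L^1,L^\infty$ bounds) times $\|w\|_{\dot H^{-1}}^2$, and Gr\"onwall forces $w\equiv0$.

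The main obstacle is the uniqueness step. Degeneracy of the diffusion prevents a straightforward $L^2$ energy estimate for $w$, which is what pushes one into the $\dot H^{-1}$ framework; there the nonlocal transport term is only controllable after a derivative is moved onto the smooth kernel $V$, so that \textbf{(V1)}--\textbf{(V2)} and the conservation of mass (guaranteeing $w$ has mean zero, hence that $\Phi$ exists) are both used in an essential way. A secondary, more routine, difficulty is the compactness in the existence proof: the strong $L^2$ convergence of $\rho_\delta$ required to identify the nonlinear limit $\rho_\delta^m\to\rho^m$ is not supplied by the classical Aubin--Lions lemma and needs its degenerate-parabolic refinement.
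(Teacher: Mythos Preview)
The paper does not prove this statement at all: Theorem~\ref{existence} is quoted verbatim as a result of Bertozzi--Slep\v{c}ev \cite{bs}, with no proof given in the present paper. So there is nothing to compare your attempt against here; your sketch is a plausible reconstruction of the argument in \cite{bs} (regularized approximation plus energy/compactness for existence, $\dot H^{-1}$ duality for uniqueness), and its main ingredients are sound, but for the purposes of this paper the theorem is simply cited.
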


By treating $\theta L^{d(2-m)} \rho*V$ as an \emph{a priori} potential, we can apply our results in Section 2, and obtain $L^\infty$ bound of $\rho$ which does not depend on $T$, together with uniform continuity of $\rho$, and H\"older continuity of $\rho$ for $1<m<2$.

\begin{theorem}
\label{uniform_bdd} Let $m>1$ and consider $V$ that satisfies the assumptions \textbf{(V1)} and \textbf{(V2)}.  Let $\rho(x,t)$ be the unique weak solution to Eq.\eqref{pde} given by Theorem \ref{existence}, with nonnegative initial data $\rho(x,0) \in C(\mathbb{T}_L^d)$, which satisfies $\int_{\mathbb{T}_L^d} \rho(x,0) dx = 1$.
Then $\|\rho(x,t)\|_{L^\infty(\mathbb{T}_L^d \times [0,\infty))}$ is bounded, where the bound only depend on $\sup_x \rho(x,0)$,  $\theta$, $\|V\|_{C^2}$ and $L$.
\end{theorem}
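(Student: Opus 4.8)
The plan is to recognize Eq.\eqref{pde} as an instance of the porous-medium-with-drift equation Eq.\eqref{pmedrift0} in which the drift potential is determined \emph{a priori} by $\rho$ itself, and then to invoke Theorem \ref{unif_bdd}. Concretely, set
$$\Phi(x,t) := \theta L^{d(2-m)}\,\big(V * \rho(\cdot,t)\big)(x),$$
so that Eq.\eqref{pde} reads exactly $\rho_t = \Delta(\rho^m) + \nabla\cdot(\rho\nabla\Phi)$ on $\mathbb{T}_L^d$. Since $\rho$ is the unique weak solution of Eq.\eqref{pde} furnished by Theorem \ref{existence}, it is in particular the unique weak solution of Eq.\eqref{pmedrift0} for this choice of $\Phi$ (uniqueness being inherited from \cite{bs}; and, for bounded nonnegative data, the weak and viscosity formulations coincide, so the comparison-based arguments underlying Theorem \ref{unif_bdd} apply — recalling also that, per the remark following Theorem \ref{unif_bdd}, the proof is insensitive to the boundary condition and works on $\mathbb{T}_L^d$).

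The only substantive point is to verify the hypotheses on $\Phi$ required by Theorem \ref{unif_bdd}, namely $\Phi \in C(\mathbb{T}_L^d \times \mathbb{R}^+)$ together with $\Phi(\cdot,t)\in C^2(\mathbb{T}_L^d)$ and a bound on $\sup_{t\ge 0}\|\Phi(\cdot,t)\|_{C^2}$ that does not depend on $t$. The $t$-uniformity is exactly where conservation of mass enters. Since $\rho\ge 0$ and $\int_{\mathbb{T}_L^d}\rho(x,t)\,dx = \int_{\mathbb{T}_L^d}\rho(x,0)\,dx = 1$ for all $t\ge 0$, Young's convolution inequality gives, for every multi-index $\alpha$ with $|\alpha|\le 2$,
$$\|\partial^\alpha\Phi(\cdot,t)\|_{L^\infty} = \theta L^{d(2-m)}\|(\partial^\alpha V)*\rho(\cdot,t)\|_{L^\infty} \le \theta L^{d(2-m)}\|\partial^\alpha V\|_{L^\infty}\,\|\rho(\cdot,t)\|_{L^1} = \theta L^{d(2-m)}\|\partial^\alpha V\|_{L^\infty},$$
so $\sup_{t\ge 0}\|\Phi(\cdot,t)\|_{C^2(\mathbb{T}_L^d)} \le \theta L^{d(2-m)}\|V\|_{C^2}$, a constant depending only on $\theta$, $L$, and $\|V\|_{C^2}$. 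Joint continuity in $(x,t)$ follows from $V\in C^2$ together with $\rho\in C(0,T,L^1(\mathbb{T}_L^d))$ (Theorem \ref{existence}): one has $\|V*\rho(\cdot,t)-V*\rho(\cdot,s)\|_{C^2} \le \|V\|_{C^2}\|\rho(\cdot,t)-\rho(\cdot,s)\|_{L^1}\to 0$ as $s\to t$, while each $V*\rho(\cdot,t)$ is $C^2$ in $x$.

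With these verifications in hand, Theorem \ref{unif_bdd} applies and yields a constant $M$ with $\|\rho(\cdot,t)\|_{L^\infty(\mathbb{T}_L^d)}\le M$ for all $t\ge 0$, where $M$ depends only on $\|\rho(\cdot,0)\|_{L^\infty}$ (finite since $\rho(\cdot,0)\in C(\mathbb{T}_L^d)$, and equal to $\sup_x\rho(x,0)$), on $\|\rho(\cdot,0)\|_{L^1}=1$, on $\sup_{t\ge 0}\|\Phi(\cdot,t)\|_{C^2}\le \theta L^{d(2-m)}\|V\|_{C^2}$, and on $m$ — hence only on $\sup_x\rho(x,0)$, $\theta$, $\|V\|_{C^2}$, and $L$ (with $m$ fixed). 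I do not anticipate any serious obstacle: the entire content of the argument is the observation that $\Phi$, although defined self-consistently through the unknown $\rho$, has a $C^2$ norm controlled uniformly in time purely by conservation of mass; once that is noted, the result is a direct citation of Theorem \ref{unif_bdd}. The only point requiring a little care is the compatibility of the notions of solution (weak solution of \cite{bs} versus the viscosity solution used in the proof of Theorem \ref{unif_bdd}), which is handled by uniqueness for bounded data.
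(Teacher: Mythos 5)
Your proposal is correct and follows essentially the same route as the paper: treat $\Phi = \theta L^{d(2-m)} V*\rho$ as an \emph{a priori} potential, use conservation of mass together with Young's inequality to bound $\|\Phi(\cdot,t)\|_{C^2}$ uniformly in $t$, and then cite Theorem \ref{unif_bdd}, identifying $\rho$ with the (unique) solution of the drift equation Eq.\eqref{pmedrift_temp}. Your additional observations — the joint $(x,t)$-continuity of $\Phi$ via $\rho\in C(0,T,L^1)$ and the matching of weak/viscosity notions — are just a more explicit spelling-out of points the paper handles by introducing $\rho_1$ and invoking uniqueness.
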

\begin{proof}
To begin with, note that Theorem \ref{existence} guarantees the existence and uniqueness of the weak solution to Eq.\eqref{pde}, which we denote by by $\rho$.  Now we treat $\Phi := \theta L^{d(2-m)}\rho*V$ as an \emph{a priori} potential, and we obtain the following estimate of $\Phi$ assumption \textbf{(V2)}:

\begin{eqnarray*}
\|\Phi(\cdot, t)\|_{C^2(\mathbb{T}_L^d)} &\leq& \theta L^{d(2-m)} \|\rho(\cdot, t)\|_{L^1(\mathbb{T}_L^d)} \|V\|_{C^2(\mathbb{T}_L^d)} \\
&\leq& \theta L^{d(2-m)}  \|V\|_{C^2(\mathbb{T}_L^d)} \\
&=& \theta L^{d(2-m)} C \quad \text{ for all } t\geq 0.
\end{eqnarray*}
We denote by $\rho_1$ the unique weak solution to the equation
\begin{equation}
(\rho_1)_t = \Delta \rho_1^m + \nabla \cdot (\rho_1\nabla\Phi)\label{pmedrift_temp}
\end{equation}
with initial data $\rho_1(\cdot, 0) \equiv \rho(\cdot, 0)$, where the existence and uniqueness is proved in \cite{hilhorst}. Theorem \ref{unif_bdd} implies $\sup_x \|\rho_1(\cdot, t)\|$ is bounded uniformly in $t$. 
Moreover, note that $\rho$ also satisfies the weak equation for Eq.\eqref{pmedrift_temp}, hence $\rho$ must coincide with $\rho_1$, which yields a uniform bound of $\rho$ which doesn't depend on time.
\end{proof}

Applying Theorem \ref{unif_cont_rho} to Eq.\eqref{pmedrift_temp}, we have the continuity of $\rho$ uniformly in $t$ for $m > 1$ -- in particular (in light of Theorem \ref{holder_m<2} below) for the case $m = 2$.

\begin{theorem} \label{unif_cont_rho_2}
Let $m>1$ and consider $V$ that satisfies the assumptions \textbf{(V1)} and \textbf{(V2)}.  Let $\rho(x,t)$ be the unique weak solution to Eq.\eqref{pde} given by Theorem \ref{existence}, with nonnegative initial data $\rho(\cdot, 0)$ satisfying 
$\|\rho(\cdot, 0)\|_{L^\infty(\mathbb{T}^d_L)}  < \infty$, 
and $\|\rho(\cdot, 0)\|_{L^1(\mathbb{T}^d_L)} =1$. Then for any $\tau>0$, $\rho$ is continuous in $\mathbb{T}_L^d \times [\tau,\infty)$, where the continuity is uniform in both $x$ and $t$.
\end{theorem}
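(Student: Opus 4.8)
The plan is to argue exactly as in the proof of Theorem \ref{uniform_bdd}: freeze the nonlocal term, regard Eq.\eqref{pde} as a porous medium equation with a sufficiently regular drift, and then invoke Theorem \ref{unif_cont_rho}. First I would apply Theorem \ref{existence} to obtain the unique weak solution $\rho$ of Eq.\eqref{pde} on $\mathbb{T}_L^d \times [0,T]$ for every $T > 0$, together with the regularity $\rho \in C(0,T;L^p(\mathbb{T}_L^d))$ for all $p \in [1,\infty)$; in particular $t \mapsto \rho(\cdot,t)$ is continuous into $L^1(\mathbb{T}_L^d)$. Set $\Phi(x,t) := \theta L^{d(2-m)}\,(V * \rho(\cdot,t))(x)$, extended by $\Phi(x,t) := \Phi(x,0)$ for $t < 0$ so that it is defined on $\mathbb{T}_L^d \times \mathbb{R}$.

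Next I would check that $\Phi$ satisfies the hypotheses of Theorem \ref{unif_cont_rho}. For the spatial regularity, assumption \textbf{(V2)} lets us differentiate under the integral sign, and since mass is conserved we have $\|\rho(\cdot,t)\|_{L^1(\mathbb{T}_L^d)} = 1$ for all $t \geq 0$; hence $\|\Phi(\cdot,t)\|_{C^2(\mathbb{T}_L^d)} \leq \theta L^{d(2-m)}\|V\|_{C^2(\mathbb{T}_L^d)} = \theta L^{d(2-m)} C$, uniformly in $t$. For the joint continuity $\Phi \in C(\mathbb{T}_L^d \times \mathbb{R})$, I would combine two equicontinuity estimates: spatially, $|\Phi(x,t) - \Phi(x',t)| \leq \theta L^{d(2-m)}\|\nabla V\|_{\infty}|x-x'|$ using $\|\rho(\cdot,t)\|_{L^1} = 1$, which is uniform in $t$; and in time, $|\Phi(x,t) - \Phi(x,s)| \leq \theta L^{d(2-m)}\|V\|_{\infty}\,\|\rho(\cdot,t) - \rho(\cdot,s)\|_{L^1(\mathbb{T}_L^d)}$, which tends to $0$ as $s \to t$ by the $C(0,T;L^1)$ regularity from Theorem \ref{existence}, uniformly in $x$. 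Together these give joint continuity.

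Having identified $\Phi$ as an admissible a priori potential, $\rho$ is a weak solution of Eq.\eqref{pmedrift_temp} with this $\Phi$, so by the uniqueness of such solutions (\cite{hilhorst}, respectively the comparison/uniqueness theory of \cite{kl}) it coincides with the solution $\rho_1$ of that linear-drift problem. The $L^\infty$ bound required as input to Theorem \ref{unif_cont_rho} is supplied by Theorem \ref{uniform_bdd} (equivalently by applying Theorem \ref{unif_bdd} to Eq.\eqref{pmedrift_temp}). Theorem \ref{unif_cont_rho} applied to $\rho_1 = \rho$ then yields, for every $\tau > 0$, that $\rho$ is uniformly continuous on $\mathbb{T}_L^d \times [\tau,\infty)$, with the continuity uniform in both $x$ and $t$, which is the assertion.

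The only step demanding genuine care is the joint continuity of the frozen potential $\Phi$ in $(x,t)$: this is precisely where the $C(0,T;L^1)$ regularity of $\rho$ from Theorem \ref{existence} is indispensable, since mere a.e.-in-$t$ control, or weak-$\ast$ continuity, would not suffice to feed Theorem \ref{unif_cont_rho}. Everything else is a routine application of results already in hand, and no new estimate is needed; note, however, that the argument is \emph{not} uniform in $L$, the dependence entering through the factor $L^{d(2-m)}C$ in the bound on $\|\Phi(\cdot,t)\|_{C^2}$.
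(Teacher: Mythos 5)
Your argument is correct and is essentially the same route the paper takes: freeze $\Phi := \theta L^{d(2-m)}\rho*V$ as an a priori potential, get the uniform $L^\infty$ bound from Theorem \ref{uniform_bdd}, and invoke Theorem \ref{unif_cont_rho}. Your careful verification of the joint continuity of $\Phi$ via the $C(0,T;L^1)$ regularity supplied by Theorem \ref{existence} simply fills in the detail that the paper's one-line proof (``follows immediately'') leaves implicit.
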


\begin{proof}
Follows immediately from
the above reasoning, Theorem \ref{uniform_bdd} and Theorem \ref{unif_cont_rho}
 \end{proof}


Applying  Theorem \ref{holder} to Eq.\eqref{pmedrift_temp}, 
with $\Phi = \theta L^{d(2-m)}\rho*V$
we have the H\"older continuity of $\rho$ for $1<m<2$.

\begin{theorem} \label{holder_m<2}
Let $1<m<2$ and consider $V$ that satisfies the assumptions \textbf{(V1)} and \textbf{(V2)}.  Let $\rho(x,t)$ be the unique weak solution to Eq.\eqref{pde} given by Theorem \ref{existence}, with nonnegative initial data  $\rho(x,0)$ satisfying 
$\|\rho(\cdot, 0)\|_{L^\infty(\mathbb{T}^d_L)} < \infty$, 
and $\|\rho(\cdot, 0)\|_{L^1(\mathbb{T}^d_L)} =1$. 
Then for any $\tau>0$, $u$ is H\"older continuous in $\mathbb{T}^d_L \times (\tau,\infty)$, where the H\"older exponent and coefficient depend on $\tau, m, d, \theta, L$ and $C$
and the $L^{\infty}$ norm of the initial condition.
\end{theorem}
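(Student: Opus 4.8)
The plan is to reduce Theorem~\ref{holder_m<2} to Theorem~\ref{holder} by treating the nonlocal aggregation equation Eq.\eqref{pde} as a porous medium equation with a fixed (time-dependent) drift potential, in exactly the manner already used for Theorems~\ref{uniform_bdd} and \ref{unif_cont_rho_2}. Set $\Phi(x,t) := \theta L^{d(2-m)}(V * \rho)(x,t)$, where $\rho$ is the unique weak solution furnished by Theorem~\ref{existence}. Then $\rho$ is a weak (indeed viscosity) solution of Eq.\eqref{pmedrift0} with this choice of $\Phi$, so it suffices to verify that $\Phi$ satisfies the hypotheses of Theorem~\ref{holder}, namely that $\Phi \in C(\mathbb{T}_L^d \times \mathbb{R})$ and $\|\Phi(\cdot,t)\|_{C^2(\mathbb{T}_L^d)} \le M_2$ uniformly in $t$, and that $\rho(\cdot,0)$ meets assumption (1) there.

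First I would record the $C^2$-in-space bound on $\Phi$. Since $\rho(\cdot,t) \ge 0$ with $\|\rho(\cdot,t)\|_{L^1(\mathbb{T}_L^d)} = 1$ (mass conservation), Young's inequality for convolutions together with assumption \textbf{(V2)} gives
\begin{equation}
\|\Phi(\cdot,t)\|_{C^2(\mathbb{T}_L^d)} \le \theta L^{d(2-m)} \|\rho(\cdot,t)\|_{L^1(\mathbb{T}_L^d)} \|V\|_{C^2(\mathbb{T}_L^d)} = \theta L^{d(2-m)} C
\end{equation}
for all $t \ge 0$, so one may take $M_2 = \theta L^{d(2-m)} C$, which depends only on $\theta, L, d, m$ and $C$. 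Next, for the assumption (1) of Theorem~\ref{holder}: $\|\rho(\cdot,0)\|_{L^\infty} < \infty$ is given, and $\|\rho(\cdot,0)\|_{L^1} = 1$; moreover, by Theorem~\ref{uniform_bdd} the solution obeys $\|\rho(\cdot,t)\|_{L^\infty(\mathbb{T}_L^d \times [0,\infty))} \le M_1'$ for a constant depending on $\sup_x \rho(x,0)$, $\theta$, $\|V\|_{C^2}$ and $L$, so one can restart the problem from any positive time with bounded, mass-one data. It remains to check joint continuity of $\Phi$ in $(x,t)$: this follows from Theorem~\ref{existence}, which gives $\rho \in C(0,T;L^1(\mathbb{T}_L^d))$, since convolution with the fixed $C^2$ kernel $V$ maps an $L^1$-continuous curve to a curve continuous in $(x,t)$ by the estimate $\|\Phi(\cdot,t) - \Phi(\cdot,s)\|_{C^2} \le \theta L^{d(2-m)}\|\rho(\cdot,t) - \rho(\cdot,s)\|_{L^1}\|V\|_{C^2}$ and uniform continuity of $x \mapsto (V*g)(x)$.

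With these verifications in hand, Theorem~\ref{holder} applies directly: for every $\tau > 0$, $\rho$ is H\"older continuous on $\mathbb{T}_L^d \times [\tau,\infty)$ with exponent and coefficient depending on $\tau, m, d, M_1, M_2$ — equivalently, on $\tau, m, d, \theta, L, C$ and $\|\rho(\cdot,0)\|_{L^\infty}$, which is the asserted dependence. I do not expect a genuine obstacle here; the only point requiring mild care is the passage from ``$\rho$ is a weak solution of Eq.\eqref{pde}'' to ``$\rho$ is a viscosity solution of Eq.\eqref{pmedrift0} with drift $\Phi$,'' but this is precisely the identification already exploited in the proof of Theorem~\ref{uniform_bdd} (where $\rho$ is shown to coincide with the solution $\rho_1$ of the frozen-potential equation Eq.\eqref{pmedrift_temp}), and the same argument reconciles the notions of solution. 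Thus the theorem follows by combining Theorem~\ref{existence}, Theorem~\ref{uniform_bdd}, and Theorem~\ref{holder}.
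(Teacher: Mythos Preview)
Your proposal is correct and follows essentially the same route as the paper: treat $\Phi = \theta L^{d(2-m)} V*\rho$ as an \emph{a priori} drift, use the $C^2$ bound on $\Phi$ coming from mass conservation and \textbf{(V2)}, invoke the identification of $\rho$ with the solution of Eq.\eqref{pmedrift_temp} already established in Theorem~\ref{uniform_bdd}, and then apply Theorem~\ref{holder}. The paper's proof is a one-liner (``follows immediately from the preceding reasoning, Theorem~\ref{uniform_bdd} and Theorem~\ref{holder}''); you have simply written out the verifications that make that sentence honest.
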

\begin{proof}
Follows immediately from
the preceding  reasoning, Theorem \ref{uniform_bdd} and Theorem \ref{holder}

\end{proof}

\section{The Case $m = 2$:  Analysis Via Normal Modes}

In this section, we will use Fourier Transform to study the PDE in Eq.\eqref{pde}, and this method works best when $m=2$.  We continue to assume, without loss of generality
that $\|\rho(x,0)\|_{L^1(\mathbb{T}_L^d)}=1$, 
however from the perspective of 
\textit{functional analysis}, 
the homogeneity of the special case $m = 2$ makes even this 
stipulation redundant.

The  dynamics in Eq.\eqref{pde} is governed by gradient flow for the ``free energy'' functional 
\begin{equation}
\mathcal{F}_\theta(\rho) = \int_{ \mathbb{T}_L^d} \rho^2  + \frac{1}{2}\theta \rho(\rho*V) dx. \label{f}
\end{equation}
For the analysis of the functional $\mathcal{F}_\theta$, since we are assuming $\rho(x,0)$ integrates to 1, we shall denote by $\mathscr{P}$ the class of probability densities on 
$\mathbb{T}_L^d$ which also belong to $L^2(\mathbb{T}_L^d)$, i.e.
\begin{equation}\label{def_p}
\mathscr{P} := \{ f\in L^1(\mathbb{T}^d_L) \cap L^2(\mathbb{T}^d_L) : \|f\|_{L^1(\mathbb{T}^d_L)} = 1\}.
\end{equation}
Special to the case $m = 2$ is that the functional 
$\mathcal F_{\theta}(\cdot)$
can be expressed in a simpler form if we express $\rho$ in terms of its Fourier modes.
We write
$$
\hat{\rho}(k)  =  \int_{ \mathbb{T}_L^d} \rho(x)\text{e}^{-ik\cdot x}dx
$$
where $k$ is of the form
$k = \frac{2\pi}{L}\vec{n}$ with $\vec{n} \in \mathbb Z^{d}$.  With these conventions 
we have
$$
\rho(x)  =  \frac{1}{L^{d}}\sum_{k}\hat{\rho}(k)\text{e}^{ik\cdot x}
$$
and, in terms of these variables, Eq.\eqref{f} becomes
\begin{equation}
\label{f_fourier}
\mathcal{F}_\theta(\rho) = \frac{1}{L^{d}}\sum_{k} | \hat{\rho}(k)|^2 (1+\frac{1}{2}\theta \hat{V}(k)).
\end{equation}

On the basis of Eq.\eqref{f_fourier}, a salient value of $\theta$ emerges:  
We denote this value by $\theta^{\sharp}$, which is defined via
\begin{equation}
\label{theta_sharp_definition}
[\theta^{\sharp}]^{-1} := 
\frac{1}{2}\max_{k \neq 0}\{|\hat{V}(k)|; \hat{V}(k) < 0\}.
\end{equation}
Formally $\theta^{\sharp}$ may be designated as $+\infty$ in case $\hat V(k) \geq 0$ for all $k\neq 0$ -- i.e. if $V$ is (essentially) of positive type.  For the purposes of the present discussion, we shall assume otherwise.  
Different values of $\theta$ separate our problem into 3 cases:

\begin{enumerate}
\item (subcritical) When $\theta<\theta^{\sharp}$, we have 
$1+\frac{1}{2}\theta \hat{V}(k)>0$ for all 
$k\in\mathbb{Z}^d$, then under the restriction $\hat{\rho}(0) = 1$, it is manifest that global minimizer for $\mathcal{F}_\theta(\rho)$ in $\mathscr{P}$ is the constant solution 
\begin{equation}\label{def_rho_0}
\rho_{0}(x):=\frac{1}{L^d} \int_{\mathbb{T}^d_L} \rho(x,0) dx \equiv \frac{1}{L^d}.
\end{equation}

\item (critical) When $\theta=\theta^{\sharp}$,  we have still have $1+\frac{1}{2}\theta  \hat{V}(k)\geq 0$ for all $k\in\mathbb{Z}^d$ however now there is a set $\mathbb K^{\sharp}$ (containing at least two elements) defined by the condition that for
$k\in \mathbb K^{\sharp}$, $1 + \frac{1}{2}\theta^{\sharp} \hat{V}(k) = 0$.
In this case the global minimizers for $\mathcal{F}_\theta(\rho)$ in $\mathscr{P}$ take the form
\begin{equation}
\rho(x) = \rho_{0} + \sum_{k\in \mathbb K^{\sharp}} c_k 
\text{e}^{ik\cdot x},
\end{equation}
where $c_{-k}  =  \overline{c}_{k}$ and, of course, subject to the restriction that the resultant quantity is non--negative.

\item (supercritical) When $\theta > \theta^\sharp$, we have $1+ \frac{1}{2}\theta  \hat{V}(k)<0$ for some $k\in\mathbb{Z}^d$.  In this case the constant solution $\rho_{0}$ is not even a local minimizer of $\mathcal{F}_\theta$ in $\mathscr{P}$, let alone global minimizer.
\end{enumerate}

\begin{remark}
The above -- which is \textit{manifest} for $m = 2$ -- is in sharp contrast to the cases 
$m \neq 2$.  In particular, for general $m$ there is an analogous quantity 
$\theta^{\sharp}$ given by
$$
[\theta^{\sharp}]^{-1} := 
\frac{1}{m}\max_{k \neq 0}\{|\hat{V}(k)|; \hat{V}(k) < 0\}
$$
where items (1) -- (3) are suggested.  
However, the following was shown for $m = 1$ and, presumably holds for 
all $m \neq 2$:  While for 
$\theta < \theta^{\sharp}$, the constant solution
has ``some stability'' (c.f. \cite{cp} Theorem 2.11 for the case $m = 1$)
there is a $\theta_{\text{\tiny T}} < \theta^{\sharp}$
where global considerations come into play.  In particular, at 
$\theta = \theta_{\text{\tiny T}}$, there is a non--uniform minimizer for 
$\mathcal F_{\theta_{\text{\tiny T}}}(\cdot)$
which is degenerate with the uniform solution.  Moreover, for 
$\theta > \theta_{\text{\tiny T}}$ (which implies, in particular, at
$\theta = \theta^{\sharp}$)
the uniform solution is no longer a minimizer.
\end{remark}

\subsection{The subcritical case, when m=2}
In the subcritical case, the constant solution $\rho_0$ is the only global minimizer of $\mathcal{F}_\theta$ in $\mathscr{P}$.
Our goal in this section is to show for every non--negative initial data 
$\rho(x,0)\in L^\infty(\mathbb{T}_L^d)$ which integrates to 1, the weak solution $\rho(x,t)$ converges to $\rho_{0}$ exponentially in $L^2({T}_L^d)$ as $t\to\infty$, where $\rho_0$ is as given in Eq.\eqref{def_rho_0}.

By formally taking the time derivative of the free energy functional, a simple calculation indicates that e.g., at least for classical solutions to
Eq.(\ref{pde}),
 the free energy is always non--increasing:
\begin{equation}
 \label{dissipate}
\frac{d}{dt}\mathcal{F}_\theta(\rho) = -\int_{\mathbb{T}_L^d} \rho\big|\nabla(\frac{m}{m-1}\rho^{m-1}+\theta L^{d(2-m)}\rho*V)\big|^2 dx.
\end{equation}
In \cite{bs}, it is proved that Eq.\eqref{dissipate} is indeed true in the integral sense:
\begin{lemma}[Bertozzi--Slep\v{c}ev] 
\label{decrease}
Consider V that satisfies the assumptions \textbf{(V1)} and \textbf{(V2)}. Let $\rho(x,t)$ be a weak solution of Eq.\eqref{pde} in $\mathbb{T}_L^d \times [0,T]$. Then for almost all $\tau\in[0,T]$,
\begin{equation} \label{f_decay_int}
\mathcal{F}_\theta(\rho(\cdot,0)) -\mathcal{F}_\theta(\rho(\cdot,\tau)) \geq
 \int_0^\tau \int_{\mathbb{T}_L^d} \rho|\nabla(\frac{m}{m-1}\rho^{m-1}+\theta L^{d(2-m)} \rho*V)|^2 dx dt
\end{equation}
\end{lemma}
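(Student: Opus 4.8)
The plan is to establish the \emph{equality} version of Eq.\eqref{f_decay_int} for a smooth, strictly positive approximating family and then to pass to the limit, in which step only the inequality survives because the dissipation functional is merely weakly lower semicontinuous.

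First I would regularize the datum: for $\delta>0$ let $\rho_\delta$ be the weak solution of Eq.\eqref{pde} with initial datum $\rho(\cdot,0)+\delta$. Since this datum is bounded below, the same argument used in Section 2 (the pointwise lower bound $\inf_x\rho_\delta(\cdot,t)\ge e^{-t\|\Delta\Phi\|_\infty}\delta$ with $\Phi=\theta L^{d(2-m)}V*\rho_\delta$) shows that along $\rho_\delta$ the equation is uniformly parabolic on each slab $\mathbb{T}_L^d\times[0,\tau]$; together with the a priori $C^2$-in-space control on $V*\rho_\delta$ from \textbf{(V2)}, $\rho_\delta$ is then a classical, strictly positive solution there. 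Theorem \ref{uniform_bdd} bounds the family in $L^\infty(\mathbb{T}_L^d\times[0,\tau])$ uniformly in $\delta$, and by stability of weak solutions in $L^1$ (from \cite{bs}) together with this uniform $L^\infty$ bound, $\rho_\delta\to\rho$ in $C(0,\tau;L^p)$ for every $p<\infty$.

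Next, for the classical positive solution, writing $\xi_\delta:=\frac{m}{m-1}\rho_\delta^{m-1}+\theta L^{d(2-m)}V*\rho_\delta$ (the first variation of $\mathcal{F}_\theta$ as in Eq.\eqref{energy}, so that $\partial_t\rho_\delta=\nabla\cdot(\rho_\delta\nabla\xi_\delta)$ is exactly Eq.\eqref{pde}), I would multiply by $\xi_\delta$, integrate over $\mathbb{T}_L^d$, integrate by parts (no boundary terms on the torus, everything smooth), and integrate in time to obtain the exact balance
\[
\mathcal{F}_\theta(\rho_\delta(\cdot,0))-\mathcal{F}_\theta(\rho_\delta(\cdot,\tau))=\int_0^\tau\!\!\int_{\mathbb{T}_L^d}\rho_\delta\,|\nabla\xi_\delta|^2\,dx\,dt.
\]
Since $\mathcal{F}_\theta$ is bounded below on nonnegative densities of fixed mass (the entropy part is bounded below, the interaction part is controlled by $\|V\|_\infty$), the right-hand side is bounded uniformly in $\delta$; as the integrand $\rho_\delta|\nabla\xi_\delta|^2$ dominates, up to constants, both $|\nabla\rho_\delta^{(2m-1)/2}|^2$ and $M^{-1}|\nabla\rho_\delta^{m}|^2$ (with $M$ the uniform $L^\infty$ bound, after absorbing the lower-order interaction flux $\theta L^{d(2-m)}\rho_\delta^{1/2}\nabla V*\rho_\delta$, which is uniformly bounded in $L^2$), the families $\rho_\delta^{(2m-1)/2}$ and $\rho_\delta^{m}$ are bounded in $L^2(0,\tau;H^1)$ uniformly in $\delta$. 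Passing to the limit $\delta\to0$: on the left, $\int\rho_\delta^2$ converges by strong $L^2$ convergence, $\int\rho_\delta(V*\rho_\delta)$ converges since $V$ is bounded, so for a.e.\ $\tau$ one has $\mathcal{F}_\theta(\rho_\delta(\cdot,\tau))\to\mathcal{F}_\theta(\rho(\cdot,\tau))$; on the right, one identifies the space--time weak-$L^2$ limit of $\rho_\delta^{1/2}\nabla\xi_\delta=\tfrac{2m}{2m-1}\nabla\rho_\delta^{(2m-1)/2}+\theta L^{d(2-m)}\rho_\delta^{1/2}(\nabla V*\rho_\delta)$ with $\rho^{1/2}\nabla\xi$ (using the uniform $H^1$ bound for the first term and strong convergence of $\rho_\delta^{1/2}$ and of $\nabla V*\rho_\delta$ for the second), and invokes weak lower semicontinuity of the $L^2$ norm to get $\liminf_\delta\int_0^\tau\!\int\rho_\delta|\nabla\xi_\delta|^2\ge\int_0^\tau\!\int\rho|\nabla\xi|^2$. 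Combining with the exact balance yields Eq.\eqref{f_decay_int}.

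The main obstacle is this last step, and it also explains why the statement is an inequality valid only for a.e.\ $\tau$: the right-hand integrand of Eq.\eqref{f_decay_int} is not even obviously finite for the limiting weak solution, because the weak-solution class supplies only $\rho^m\in L^2(0,T;H^1)$, which near the degeneracy set $\{\rho=0\}$ does not control $\nabla\rho^{(2m-1)/2}$, i.e.\ $\rho^{1/2}\nabla\rho^{m-1}$. Deriving the dissipation bound \emph{uniformly at the regularized level} is precisely what furnishes the missing a priori estimate, lets one interpret $\rho^{1/2}\nabla\xi$ as the genuine weak limit of the degenerate nonlinear fluxes (ruling out a defect measure concentrated on $\{\rho=0\}$), and provides the compactness needed for the lower-semicontinuity step; verifying that these weak limits are correctly identified, via the strong $L^2$ convergence of $\rho_\delta$ together with the uniform $H^1$ bounds above, is where the real work lies.
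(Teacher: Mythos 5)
The paper does not supply its own proof of this lemma: it is attributed to Bertozzi and Slep\v{c}ev and quoted directly from \cite{bs} (``In \cite{bs}, it is proved that\ldots''), so there is no internal argument to compare against. What you have written is a reconstruction of the kind of argument that establishes it, and as such it is conceptually correct and follows the standard route: regularize so that the equation is uniformly parabolic and the dissipation balance is an exact identity, bound the dissipation uniformly in $\delta$ to extract the otherwise-missing estimate on $\nabla\rho^{(2m-1)/2}$, identify the weak limit of $\rho_\delta^{1/2}\nabla\xi_\delta$, and conclude by weak lower semicontinuity of the $L^2$ norm. Your closing observation — that deriving the estimate \emph{at the regularized level} is precisely what makes $\rho^{1/2}\nabla\xi$ a genuine $L^2$ object for the limiting weak solution — is the right way to see why the statement is an inequality and not an equality.

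Two details you gloss over deserve a sentence each. First, with $\rho(\cdot,0)$ merely in $L^\infty$, the solution starting from $\rho(\cdot,0)+\delta$ is classical only for $t>0$ by interior parabolic regularity; you cannot integrate by parts down to $t=0$ directly. One should either mollify the initial datum in space as well, or prove the exact identity on $[\epsilon,\tau]$ and send $\epsilon\downarrow0$, using the uniform $L^\infty$ bound and the time-continuity of $t\mapsto\mathcal{F}_\theta(\rho_\delta(\cdot,t))$. Second, the assertion $\rho_\delta\to\rho$ in $C(0,\tau;L^p)$ is the load-bearing input for identifying weak limits; it should be tied explicitly to the $L^1$-contraction/comparison structure from \cite{bs} (or \cite{kl}), since without it the identification of $\rho^{1/2}\nabla\xi$ as the weak limit of the degenerate fluxes fails. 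Both are fixable and do not affect the soundness of the overall strategy.
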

\vspace{-0.5cm}
\begin{remark} 
\label{GHT}
Theorem \ref{unif_cont_rho_2} implies that $\rho(\cdot, t)$ is a continuous function of $t$, hence $\mathcal{F}_\theta(\rho(\cdot, t))$ is continuous in $t$ as well.  Therefore Eq.\eqref{f_decay_int} indeed holds for all $\tau\in[0,T]$ and, moreover, 
Eq.(\ref{dissipate}) may be regarded as a \textit{differential inequality}.
\end{remark}

In the following lemma, we show when $\theta <\theta^{\sharp}$, the free energy will decay to the free energy of the global minimizer as $t\to\infty$.

\begin{lemma}
\label{YUY}
Suppose $m=2$ and 
consider $V$ that satisfies the assumptions \textbf{(V1)} and \textbf{(V2)}.
Further suppose that $\theta<\theta^{\sharp}$, where $\theta^\sharp$ is as given in Eq.\eqref{theta_sharp_definition} -- including $\theta^{\sharp} = \infty$ if $V$ is of positive type.
 Let $\rho(x,t)$ be the weak solution to Eq.\eqref{pde} on $[0,\infty)\times\mathbb{T}_L^d$, with non-negative initial data $\rho(x,0)\in L^\infty(\mathbb{T}_L^d)$ which integrates to 1.  Then $\mathcal{F}_\theta(\rho)\to \mathcal{F}_\theta(\rho_{0})$ as $t\to\infty$, where $\rho_0$ is the uniform solution $($as given in Eq.\eqref{def_rho_0}$)$.\label{decay}
\end{lemma}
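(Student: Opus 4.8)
The plan is the classical gradient-flow scheme --- use the energy dissipation to show the $\omega$-limit is stationary, then pin that stationary limit to $\rho_0$ --- except that the identification of stationary states will be done by a short Plancherel computation, which is where the hypothesis $\theta<\theta^\sharp$ enters. First, by Remark~\ref{GHT} the map $t\mapsto\mathcal F_\theta(\rho(\cdot,t))$ is continuous and non-increasing; since $\rho(\cdot,t)$ is nonnegative, integrates to $1$, and lies in $L^\infty(\mathbb T_L^d)\subset L^2(\mathbb T_L^d)$ by Theorem~\ref{uniform_bdd}, it belongs to $\mathscr P$, and in the subcritical case $\rho_0$ is the global minimizer of $\mathcal F_\theta$ over $\mathscr P$. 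Hence $\mathcal F_\theta(\rho(\cdot,t))\downarrow\mathcal F_\infty$ for some $\mathcal F_\infty\ge\mathcal F_\theta(\rho_0)$, and the assertion is that this is an equality. Lemma~\ref{decrease} (with $m=2$, so $L^{d(2-m)}=1$) gives $\int_0^\infty D(t)\,dt\le\mathcal F_\theta(\rho(\cdot,0))-\mathcal F_\infty<\infty$, where $D(t):=\int_{\mathbb T_L^d}\rho(\cdot,t)\,|\nabla(2\rho(\cdot,t)+\theta\,\rho(\cdot,t)*V)|^2\,dx$; so one can pick $t_n\uparrow\infty$ with $D(t_n)\to0$ and $\rho^2(\cdot,t_n)\in H^1(\mathbb T_L^d)$. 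By Theorems~\ref{uniform_bdd} and~\ref{unif_cont_rho_2}, the family $\{\rho(\cdot,t_n):t_n\ge 1\}$ is bounded in $L^\infty$ and equicontinuous, so, after extracting a subsequence, $\rho_n:=\rho(\cdot,t_n)\to\psi$ uniformly on $\mathbb T_L^d$; then $\psi\ge0$, $\int\psi=1$, and $\mathcal F_\theta(\psi)=\mathcal F_\infty$ because the quadratic functional $\mathcal F_\theta$ is continuous under uniform convergence of a uniformly bounded sequence.

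Second, I would show $\psi$ is a time-independent weak solution and is Lipschitz. Writing $u_n:=2\rho_n+\theta\,\rho_n*V$, the relation $D(t_n)\to0$ says $\sqrt{\rho_n}\,\nabla u_n\to0$ in $L^2$, hence $\rho_n\nabla u_n\to0$ in $L^1$ by Cauchy--Schwarz (recall $\|\rho_n\|_{L^1}=1$). Since $\rho_n\nabla u_n=\nabla(\rho_n^2)+\theta\,\rho_n\,(\rho_n*\nabla V)$ and $\rho_n\,(\rho_n*\nabla V)\to\psi\,(\psi*\nabla V)$ uniformly, while $\nabla(\rho_n^2)\to\nabla(\psi^2)$ in the sense of distributions, uniqueness of distributional limits forces the pointwise identity $\nabla(\psi^2)=-\theta\,\psi\,\nabla(\psi*V)$ (both sides continuous, so in fact $\psi^2\in C^1(\mathbb T_L^d)$). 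On the open set $\{\psi>0\}$ this reads $\nabla\psi=-\tfrac\theta2\,(\psi*\nabla V)$, so in particular $|\nabla\psi|\le\tfrac\theta2\|\nabla V\|_\infty$ there; combined with the continuity of $\psi$ and $\psi\equiv0$ on $\{\psi=0\}$, a routine restriction-to-segments argument (all free-boundary values being $0$) shows $\psi\in W^{1,\infty}(\mathbb T_L^d)$, with $\nabla\psi=-\tfrac\theta2(\psi*\nabla V)\,\mathbf 1_{\{\psi>0\}}$ a.e.\ (using $\nabla\psi=0$ a.e.\ on $\{\psi=0\}$).

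Third, I finish by a Plancherel computation. Using the last relation and integrating by parts on the boundaryless torus (legitimate since $\psi\in W^{1,\infty}$ and $\psi*V\in C^2$),
\[
\int_{\mathbb T_L^d}|\nabla\psi|^2\,dx=-\tfrac\theta2\int_{\mathbb T_L^d}\nabla\psi\cdot\nabla(\psi*V)\,dx=\tfrac\theta2\int_{\mathbb T_L^d}\psi\,(\psi*\Delta V)\,dx.
\]
By Plancherel and \textbf{(V1)} (so $\hat V(k)\in\mathbb R$), the left-hand side equals $\frac1{L^d}\sum_k|k|^2|\hat\psi(k)|^2$ and the right-hand side equals $-\frac\theta2\frac1{L^d}\sum_k|k|^2\hat V(k)|\hat\psi(k)|^2$, whence $\sum_k|k|^2\bigl(1+\tfrac\theta2\hat V(k)\bigr)|\hat\psi(k)|^2=0$. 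For $k\ne0$ one has $|k|^2\ge(2\pi/L)^2>0$ and, because $\theta<\theta^\sharp$, $1+\tfrac\theta2\hat V(k)\ge 1-\theta/\theta^\sharp>0$; so every summand vanishes, $\hat\psi(k)=0$ for all $k\ne0$, i.e.\ $\psi\equiv\frac1{L^d}\hat\psi(0)=\rho_0$. Therefore $\mathcal F_\infty=\mathcal F_\theta(\psi)=\mathcal F_\theta(\rho_0)$, which (together with the monotone convergence of $\mathcal F_\theta(\rho(\cdot,t))$ to $\mathcal F_\infty$) is the lemma.

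The main obstacle is the second step: extracting from $D(t_n)\to0$ the genuine pointwise relation $\nabla(\psi^2)=-\theta\psi\nabla(\psi*V)$ together with the Lipschitz regularity of $\psi$ that legitimizes the integration by parts above --- precisely the points at which the degeneracy of the diffusion and the possible vanishing of $\psi$ must be handled (the redeeming feature being that the free-boundary contributions are killed by $\psi=0$ there). The rest is routine, and it is only in the closing lines that the subcriticality assumption $\theta<\theta^\sharp$ is actually used.
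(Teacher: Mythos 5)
Your proposal is correct and follows essentially the same strategy as the paper: use the dissipation inequality to extract a subsequence $t_n$ with vanishing dissipation, invoke Arzel\`a--Ascoli (via the uniform bound and DiBenedetto continuity) to get a uniform limit $\psi$, identify $\psi$ as a stationary (Kirkwood--Monroe) state, establish Lipschitz regularity by the same component/segment argument, and then kill all nonzero Fourier modes via $\theta<\theta^\sharp$. The only (minor) difference is in passing to the limit in the stationarity equation --- you use $L^1$ convergence of $\rho_n\nabla u_n=\nabla(\rho_n^2)+\theta\rho_n(\rho_n*\nabla V)$ and uniqueness of distributional limits, whereas the paper works with the quantity $\tfrac{4}{3}\nabla\rho_n^{3/2}+\rho_n^{1/2}\nabla(\theta\rho_n*V)$ and a weak $L^2$ limit of $\nabla\rho_n^{3/2}$ --- but these are two routes to the same identity and the argument is otherwise identical.
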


\begin{proof} By Lemma \ref{decrease}, we know $\mathcal{F}_\theta(\rho(t))$ is a continuous and decreasing function of $t$, whose limit is bounded below by $\mathcal{F}_\theta(\rho_0)$, since $\rho_0$ is the global minimizer of $\mathcal{F}_\theta$ in $\mathscr{P}$ when $\theta < \theta^\sharp$.  Hence we can send $\tau$ to infinity in Eq.\eqref{f_decay_int}, which gives
\begin{equation}
\int_0^\infty \int_{\mathbb{T}_L^d} \rho|\nabla(2\rho+\theta\rho*V)|^2 dxdt <  \infty.
\end{equation}

Then there exists an increasing sequence of time $(t_n)_{n=1}^\infty$, where $\lim_{n\to\infty}t_n = \infty$, such that
 \begin{equation}
\label{PYM}
\lim_{n\to\infty} \int_{\mathbb{T}_L^d} \rho(x, t_n)|\nabla(2\rho(x, t_n)+\theta \rho(x, t_n)*V)|^2 dx =0.
\end{equation}
To avoid clutter, in what follows, we shall abbreviate
$\rho(\cdot, t_{n})$
by $\rho_n$.  
Recall that Theorem \ref{unif_bdd} gives us a uniform bound of $\|\rho_n\|_{L^\infty(\mathbb{R}^d)}$. In addition, by \cite{dib}, $(\rho_n)$ is uniformly equicontinuous, hence Arzel\`{a}-Ascoli Theorem enables us to find a subsequence of $\rho_n$ (which we again denote by $\rho_n$ for notational simplicity), and a continuous function $\rho_\infty$, such that

\begin{equation}\label{unif_conv_to_rho_infty}
\lim_{n\to\infty} \| \rho_n - \rho_\infty \|_{L^\infty(\mathbb{T}^d_L)} = 0,
\end{equation}
We next claim that
$\|\nabla \rho_n^{3/2}\|_{L^2(\mathbb{T}_L^d)}$ is bounded uniformly in $n$.  To prove the claim, we first note that
\begin{equation} 
\label{rho_n_estimate}
\int_{\mathbb{T}_L^d} \big| \frac{4}{3} \nabla \rho_n^{3/2} + \rho_n^{1/2} \nabla(\theta \rho_n*V)\big|^2 dx =  \int_{\mathbb{T}_L^d} \rho_n \big|2\nabla \rho_n + \nabla (\theta \rho_n*V)\big|^2 dx \to 0.
\end{equation}
To obtain the uniform $L^2$ bound for $\nabla \rho_n^{3/2}$, due to the triangle inequality, it suffices to prove a uniform $L^2$ bound for $\rho_n^{1/2} \nabla(\theta \rho_n*V)$, which is true since $\rho_n$ is uniformly bounded in $n$ and  $\|V\|_{C^2(\mathbb{T}_L^d)}< \infty$ due to \textbf{(V2)}, hence the claim is proved.

As a consequence of the claim, we obtain weak convergence of $\nabla \rho_n^{3/2} $ in $L^2$ (along another subsequence)  And, it is clear, the limit 
is just $\nabla \rho_\infty^{3/2}$ due to the uniform convergence of the $(\rho_{n})$.
(Moreover, this places $\nabla \rho_\infty^{3/2} \in L^2(\mathbb{T}^d_L: \mathbb{R}^d)$).
Thus:
\begin{equation}\label{L^2_weakly}
\nabla \rho_n^{3/2} \rightharpoonup \nabla \rho_\infty^{3/2} \text{ as } n\to\infty  \text{ weakly in } L^2(\mathbb{T}^d_L: \mathbb{R}^d).
\end{equation}

Let 
$$
B_n := \frac{4}{3} \nabla \rho_n^{3/2} + \rho_n^{1/2} \nabla(\theta \rho_n*V).
$$ 
Then  Eq.\eqref{unif_conv_to_rho_infty} and Eq.\eqref{L^2_weakly} 
and an additional uniform convergence argument identifying the weak limit of $\rho_n^{1/2} \nabla(\theta \rho_n*V)$,
\noindent implies that $B_n$ weakly converges to  $B_\infty$ in $L^2$, where $$B_\infty := \frac{4}{3} \nabla \rho_\infty^{3/2} + \rho_\infty^{1/2} \nabla(\theta \rho_\infty*V).$$ 
On the other hand, recall that Eq.\eqref{rho_n_estimate} gives us that $B_n \to 0$ strongly in $L^2$, thus we have $B_\infty$ is indeed 0 i.e.,
\begin{equation}\label{rho_infty_int}
 \int_{\mathbb{T}_L^d} \big| \frac{4}{3} \nabla \rho_\infty^{3/2} + \rho_\infty^{1/2} \nabla(\theta \rho_\infty*V)\big|^2 dx=
 \int_{\mathbb{T}_L^d} \rho_\infty \big|2\nabla \rho_\infty + \nabla (\theta \rho_\infty*V)\big|^2 dx = 0.
 \end{equation}

In particular, then,  
$\nabla(\rho_\infty+\frac{1}{2}\theta\rho_\infty*V)$
is zero a.e.~on the
support of $\rho_{\infty}$.  Now $\rho_{\infty}$ certainly admits a weak derivative
which, clearly, is non--zero only on the support of $\rho_{\infty}$.  Thus, from the 
preceding, we can write
\begin{equation}
\label{OUF}
 \int_{\mathbb{T}_L^d} \nabla\rho_\infty \cdot \nabla(\rho_\infty+\frac{1}{2}\theta  \rho_\infty*V) dx = 0.
 \end{equation}
 
 Now, we wish to express the above as a Fourier sum which requires some additional justification.  To this end we claim that $\rho_{\infty}$ is Lipschitz continuous 
 -- i.e., in $W^{1,\infty}(\mathbb T_{L}^{d})$ -- which places both entities 
 in $L^{2}(\mathbb T_{L}^{d})$ and vindicates the use of explicit formulas.  
 
 The equation $\nabla \rho_{\infty}  = -\frac{1}{2}\theta\nabla (V*\rho_{\infty})$ valid on the support of $\rho$ shows that in the various \textit{components} where $\rho_{\infty}$ is positive, 
 it is at least $C^2$.  Indeed, in general, Hypothesis 
\textbf{(V2)} immediately implies $\|\rho_\infty(x)*V\|_{C^2(\mathbb{T}^d_L)} \leq \|\rho_\infty\|_{L^1} \|V\|_{C^2(\mathbb{T}^d_L)}$ so whenever $\rho_{\infty}$ satisfies this
($m = 2$ version of the Kirkwood--Monroe) equation, we have Lipschitz continuity with uniform constant.  We shall denote this constant by $\kappa$. 
Now suppose that $x,y\in \mathbb T_{L}^{d}$ have $\rho_{\infty}(x)$ and
$\rho_{\infty}(y)$ positive.  Let us assume, ostensibly, that $x$ and $y$ belong to different components.  On the (shortest) line joining $x$ and $y$, let 
$z_{x}$ denote the first point, starting from $x$ that is encountered on the boundary of the component of $x$ and similarly for $z_{y}$.  Then
\begin{align}
\label{WQW}
|\rho_{\infty}(x) - \rho_{\infty}(y)|  & =  |\rho_{\infty}(x) - \rho_{\infty}(z_{x}) + \rho_{\infty}(z_{y}) - \rho_{\infty}(y)|
\notag
\\
& \leq |\rho_{\infty}(x) - \rho_{\infty}(z_{x})| + |\rho_{\infty}(z_{y}) - \rho_{\infty}(y)|
\notag
\\
& \leq \kappa[|x - z_{x}| + |y - z_{y}|] \leq \kappa|x - y|;
\end{align}
the first inequality due to $\rho_{\infty}(z_{x})  = \rho_{\infty}(z_{y}) = 0$
and the last inequality because all four points lie in order on the same line.  A similar argument can be used if, e.g., $\rho_{\infty}(x)$
is positive and $\rho_{\infty}(y)$ is zero.

All of this establishes enough regularity to unabashedly express Eq.(\ref{OUF}) in Fourier modes:

 \begin{equation}\label{rho_infty_fourier}
0 = \sum_{k}\frac{|k|^2}{L^d} |\hat{\rho}_\infty(k)|^2(1+\frac{1}{2}\theta \hat{V}(k)).
 \end{equation}
By the defining property of $\theta^{\sharp}$ we have $1+\frac{1}{2}\theta  \hat{V}(k)>0$ for all $k\neq 0$, thus Eq.\eqref{rho_infty_fourier} implies $\hat{\rho}_\infty(k)=0$ for all $k\neq 0$, i.e. $\rho_\infty\equiv\rho_{0}$.

Now, we may use the monotonicity in time of  $\mathcal{F}_\theta(\rho(t))$ and we finally have
$$
\lim_{t\to\infty} \mathcal{F}_\theta(\rho(t)) = \lim_{n\to\infty} \mathcal{F}_\theta(\rho_{n}) =  \mathcal{F}_\theta(\rho_\infty) = \mathcal{F}_\theta(\rho_{0})
$$
which is the stated claim.
\end{proof}

By combining the above result with the uniform continuity in time, we can show the solution will become uniformly positive after a sufficiently large time.
\begin{corollary}\label{uniform_pos}
Under the assumption of Lemma \ref{decay}, we have 
$$\lim_{t\to\infty} \|\rho(\cdot,t)-\rho_0\|_{L^\infty(\mathbb{T}_L^d)}= 0,$$ hence there exists $T>0$ depending on $\theta$, $\|V\|_{C^2(\mathbb{T}^d_L)}$ and $\rho(\cdot,0)$, such that $\rho(x,t)>\rho_0/2$ for all  $x\in\mathbb{T}_L^d, t>T$. 
\end{corollary}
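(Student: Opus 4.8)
The plan is to argue by contradiction, leveraging the convergence of the free energy proved in Lemma \ref{decay} together with the compactness furnished by Theorem \ref{uniform_bdd} (uniform $L^\infty$ bound) and Theorem \ref{unif_cont_rho_2} (equicontinuity of $\rho(\cdot,t)$, uniformly in $x$ and $t\geq\tau$). The point is that any subsequential uniform limit of the orbit must be a global minimizer of $\mathcal{F}_\theta$, and in the subcritical case the only such minimizer is $\rho_0$.

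First I would suppose the first assertion fails: there exist $\delta>0$ and times $t_n\to\infty$ with $\|\rho(\cdot,t_n)-\rho_0\|_{L^\infty(\mathbb{T}_L^d)}\geq\delta$. By Theorem \ref{uniform_bdd} the family $\{\rho(\cdot,t_n)\}$ is uniformly bounded, and by Theorem \ref{unif_cont_rho_2} it is uniformly equicontinuous; hence Arzel\`a--Ascoli yields, along a subsequence (not relabeled), a continuous function $\rho_*$ with $\rho(\cdot,t_n)\to\rho_*$ uniformly on $\mathbb{T}_L^d$. Since the convergence is uniform on the bounded domain, $\rho_*\geq 0$ and $\int_{\mathbb{T}_L^d}\rho_*\,dx=\lim_n\int_{\mathbb{T}_L^d}\rho(\cdot,t_n)\,dx=1$, so $\rho_*\in\mathscr{P}$; and passing to the limit in the hypothesis gives $\|\rho_*-\rho_0\|_{L^\infty}\geq\delta$, so $\rho_*\not\equiv\rho_0$.

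Next I would note that for $m=2$ the functional $\mathcal{F}_\theta(\rho)=\int_{\mathbb{T}_L^d}\rho^2+\tfrac12\theta\rho(\rho*V)\,dx$ is continuous with respect to uniform convergence on $\mathbb{T}_L^d$: the quadratic term is handled by $|\rho_n^2-\rho_*^2|\leq 2M|\rho_n-\rho_*|$, and the convolution term by $\|(\rho_n-\rho_*)*V\|_{L^\infty}\leq L^d\|\rho_n-\rho_*\|_{L^\infty}\|V\|_{L^\infty}$ together with \textbf{(V2)}. Hence $\mathcal{F}_\theta(\rho(\cdot,t_n))\to\mathcal{F}_\theta(\rho_*)$. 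On the other hand, Lemma \ref{decay} gives $\mathcal{F}_\theta(\rho(\cdot,t))\to\mathcal{F}_\theta(\rho_0)$ as $t\to\infty$ (the limit existing by monotonicity, cf. Remark \ref{GHT}), so in particular $\mathcal{F}_\theta(\rho(\cdot,t_n))\to\mathcal{F}_\theta(\rho_0)$. Therefore $\mathcal{F}_\theta(\rho_*)=\mathcal{F}_\theta(\rho_0)$, i.e. $\rho_*$ is a global minimizer of $\mathcal{F}_\theta$ on $\mathscr{P}$. But in the subcritical case $\rho_0$ is the \emph{unique} global minimizer, contradicting $\rho_*\not\equiv\rho_0$. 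This establishes $\lim_{t\to\infty}\|\rho(\cdot,t)-\rho_0\|_{L^\infty(\mathbb{T}_L^d)}=0$.

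The uniform positivity is then immediate: since $\theta$, $\|V\|_{C^2(\mathbb{T}_L^d)}$ and $\rho(\cdot,0)$ determine the entire evolution, there is $T>0$ (depending only on these data) with $\|\rho(\cdot,t)-\rho_0\|_{L^\infty(\mathbb{T}_L^d)}<\rho_0/2$ for all $t>T$, whence $\rho(x,t)>\rho_0-\rho_0/2=\rho_0/2$ for all $x\in\mathbb{T}_L^d$ and $t>T$. The only step that requires genuine care is the passage from subsequential uniform convergence — which is all that compactness directly provides, and which already appears inside the proof of Lemma \ref{decay} — to convergence along the full time axis; this is precisely what the contradiction setup above resolves, and I expect it to be the sole subtlety.
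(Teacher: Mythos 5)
Your proof is correct, and it uses the same compactness/contradiction scaffolding as the paper; however, the crucial step — identifying the subsequential uniform limit $\rho_*$ as $\rho_0$ — is handled by a genuinely different route, and yours is the more defensible one. The paper simply writes that ``according to the arguments of Lemma \ref{YUY}\dots the limit \textit{must} be $\rho_0$,'' which, if read as re-running the dissipation argument of that lemma along the arbitrary sequence $\tau_n$, does not literally work: the finiteness of $\int_0^\infty\int\rho|\nabla(2\rho+\theta\rho*V)|^2$ only guarantees that the integrand vanishes along \emph{some} sequence of times, not along an arbitrary one. You sidestep this entirely by using the \emph{conclusion} of Lemma \ref{decay} — that $\mathcal{F}_\theta(\rho(\cdot,t))\to\mathcal{F}_\theta(\rho_0)$ along the full time axis, which holds by monotonicity — together with the continuity of $\mathcal{F}_\theta$ under uniform convergence (easy for $m=2$ since it is quadratic, as you verify) and the uniqueness of the global minimizer in the subcritical regime. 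That makes the identification airtight. The remaining bookkeeping (mass conservation passing to the limit, $\rho_*\ge 0$, the second assertion about $T$) is handled correctly; the paper works pointwise via $y_n\to y_\infty$, whereas you work directly with $L^\infty$ norms, but that is a purely cosmetic difference.
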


\begin{proof}
%
%
%
%
%
%
%
%
We prove the statement in the display.  Supposing that this is not the case.
Then there is a sequence of times, $(\tau_{n})$ and points $(y_{n})$ 
-- $y_{n} \in \mathbb T_{L}^{d}$ -- and a $\delta > 0$
such that
$$
|\rho(y_{n}, \tau_{n}) - \rho_{0}| > \delta.
$$
Now, going to a further subsequence, we have 
$y_{n} \to y_{\infty}$
(with $y_{\infty} \in \mathbb T_{L}^{d}$ by compactness).
But, along yet a further subsequence, not relabeled,
we have, according to the arguments of
Lemma \ref{YUY} that 
$\rho(\cdot, \tau_{n})$ is converging uniformly 
and the limit \textit{must} be $\rho_{0}$.  
Thus
$$
\lim_{n\to\infty}\rho_{n}(y_{n}, \tau_{n})
=\lim_{n\to\infty}[\rho_{n}(y_{n}, \tau_{n}) - \rho_{n}(y_{\infty}, \tau_{n})]
+\lim_{n\to\infty}\rho_{n}(y_{\infty}, \tau_{n})
= \rho_{0}
$$
in contradiction with the preceding display.
\end{proof}

\begin{theorem}\label{exp_conv}
Suppose $m=2$ and $\theta<\theta^{\sharp}$, where $\theta^\sharp$ is as given in Eq.\eqref{theta_sharp_definition}. Consider V that satisfies the assumptions \textbf{(V1)} and \textbf{(V2)}.  Let $\rho(x,t)$ be the weak solution to Eq.\eqref{pde} on $[0,\infty)\times\mathbb{T}_L^d$, with non--negative initial data $\rho(x,0)\in L^\infty(\mathbb{T}_L^d)$ which integrates to 1.  
Then $\mathcal{F}_\theta(\rho(t))$ decays exponentially to $\mathcal{F}_\theta(\rho_0)$, where the rate depend on $\rho(x,0)$. Moreover, $\|\rho(\cdot,t)-\rho_0\|_{L^2(\mathbb{T}_L^d)} \to 0$ exponentially, i.e.
\begin{equation*}
0\leq\mathcal{F}_\theta(\rho(t)) - \mathcal{F}_\theta(\rho_0) \leq C_1 \exp({-\frac{\rho_0 c'}{L^2} t}),
\end{equation*}
and
\begin{equation*}
\|\rho(t) - \rho_0\|_{L^2(\mathbb{T}^d_L)} \leq C_2 \exp({-\frac{\rho_0 c'}{L^2} t}),
\end{equation*}
where $c'$ and $C_1$ and $C_2$ depend on $\theta$, $V$ and $\rho(\cdot,0)$.
\end{theorem}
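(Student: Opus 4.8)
The plan is to promote the qualitative statement of Corollary~\ref{uniform_pos} to a quantitative rate. Set
\begin{equation*}
g(t):=\mathcal{F}_\theta(\rho(\cdot,t))-\mathcal{F}_\theta(\rho_0)\ \ge\ 0,
\end{equation*}
the inequality holding because $\rho(\cdot,t)\in\mathscr{P}$ and, in the subcritical regime, $\rho_0$ is the global minimizer of $\mathcal{F}_\theta$ on $\mathscr{P}$. Since $\theta<\theta^{\sharp}$, the definition \eqref{theta_sharp_definition} gives $1+\tfrac12\theta\hat V(k)\ge\lambda_\theta:=1-\theta/\theta^{\sharp}>0$ for every $k\ne 0$ (read $\lambda_\theta:=1$ if $\theta^{\sharp}=\infty$), and trivially $1+\tfrac12\theta\hat V(k)\le\Lambda_\theta:=1+\tfrac12\theta\sup_{k}|\hat V(k)|$. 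As $\rho(\cdot,t)$ has unit mass its zeroth Fourier mode equals $1$, so Parseval together with the Fourier representation \eqref{f_fourier} of the energy yields the spectral-gap sandwich
\begin{equation*}
\lambda_\theta\,\|\rho(\cdot,t)-\rho_0\|_{L^2(\mathbb{T}_L^d)}^2\ \le\ g(t)\ \le\ \Lambda_\theta\,\|\rho(\cdot,t)-\rho_0\|_{L^2(\mathbb{T}_L^d)}^2 .
\end{equation*}
In particular it suffices to prove that $g$ decays exponentially; the $L^2$ bound will then follow from the left inequality.

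Next I would derive a closed linear differential inequality for $g$. By Corollary~\ref{uniform_pos} there is a time $T=T(\theta,V,\rho(\cdot,0))$ with $\rho(x,t)\ge\rho_0/2$ for all $x$ and all $t>T$. Writing $\mu:=2\rho+\theta\rho*V$ and invoking Remark~\ref{GHT} to treat \eqref{dissipate} as a differential inequality, for a.e.\ $t>T$ we have
\begin{equation*}
g'(t)=-\int_{\mathbb{T}_L^d}\rho\,|\nabla\mu|^2\,dx\ \le\ -\tfrac{\rho_0}{2}\int_{\mathbb{T}_L^d}|\nabla\mu|^2\,dx .
\end{equation*}
For a.e.\ such $t$ the last integral is finite, since $\int\rho|\nabla\mu|^2\,dx<\infty$ by Lemma~\ref{decrease} and $\rho\ge\rho_0/2$, and $\rho*V$ is $C^2$; hence $\mu(\cdot,t)\in H^1(\mathbb{T}_L^d)$ and the Fourier calculus below is legitimate. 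The $k$-th Fourier coefficient of $\mu$ is $2\hat\rho(k)\big(1+\tfrac12\theta\hat V(k)\big)$, so using $|k|^2\ge 4\pi^2/L^2$ for $k\ne 0$ and the elementary inequality $\big(1+\tfrac12\theta\hat V(k)\big)^2\ge\lambda_\theta\big(1+\tfrac12\theta\hat V(k)\big)$,
\begin{equation*}
\int_{\mathbb{T}_L^d}|\nabla\mu|^2\,dx=\frac{4}{L^d}\sum_{k\ne 0}|k|^2|\hat\rho(k)|^2\Big(1+\tfrac12\theta\hat V(k)\Big)^2\ \ge\ \frac{16\pi^2\lambda_\theta}{L^2}\,g(t).
\end{equation*}
Hence $g'(t)\le-\alpha\,g(t)$ for a.e.\ $t>T$, with $\alpha:=8\pi^2\rho_0\lambda_\theta/L^2$.

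Grönwall's inequality then gives $g(t)\le g(T)\,e^{-\alpha(t-T)}$ for $t>T$; since $g$ is non-increasing in $t$ (Lemma~\ref{decrease}) we have $g(T)\le g(0)=\mathcal{F}_\theta(\rho(\cdot,0))-\mathcal{F}_\theta(\rho_0)$, and absorbing $e^{\alpha T}$ together with the (bounded) values of $g$ on $[0,T]$ into a constant $C_1=C_1(\theta,V,\rho(\cdot,0))$ yields the first displayed estimate with $c'=8\pi^2\lambda_\theta$ (depending only on $\theta$ and $V$). For the second, the left spectral-gap inequality gives $\|\rho(\cdot,t)-\rho_0\|_{L^2}^2\le\lambda_\theta^{-1}g(t)\le\lambda_\theta^{-1}C_1\,e^{-\rho_0 c' t/L^2}$, whence $\|\rho(\cdot,t)-\rho_0\|_{L^2}\le C_2\,e^{-\rho_0 c' t/(2L^2)}$; replacing $c'$ by $c'/2$ throughout (still depending only on $\theta,V$) gives the stated form.

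The only genuinely nontrivial input is Corollary~\ref{uniform_pos} --- that the a priori possibly vanishing solution becomes uniformly bounded below after a finite time --- which is exactly what permits dropping the degenerate weight $\rho$ from the dissipation $\int\rho|\nabla\mu|^2$ and bounding it below by a constant times the non-degenerate quadratic form $g(t)$; absent this, one controls only a $\rho$-weighted Dirichlet energy and the differential inequality does not close. Everything downstream --- Parseval, the inequality $(1+x)^2\ge\lambda(1+x)$ for $1+x\ge\lambda>0$, Grönwall, and the $H^1$ regularity of $\mu(\cdot,t)$ needed to justify the Fourier manipulations --- is routine.
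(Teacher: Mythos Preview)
Your proof is correct and follows essentially the same approach as the paper: invoke Corollary~\ref{uniform_pos} to obtain the uniform lower bound $\rho\ge\rho_0/2$ after time $T$, use this to strip the degenerate weight from the dissipation integral, pass to Fourier modes to bound the resulting quadratic form below by a multiple of $g(t)$, and integrate the differential inequality. The only cosmetic difference is in the bookkeeping of constants --- the paper defines $c'=2\min_{k\ne 0}|k|^2\bigl(1+\tfrac12\theta\hat V(k)\bigr)$, optimizing the product jointly, whereas you bound $|k|^2\ge 4\pi^2/L^2$ and $1+\tfrac12\theta\hat V(k)\ge\lambda_\theta$ separately; for large $L$ these agree, and in any case both give a rate of the correct form.
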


\begin{proof}
By Lemma \ref{uniform_pos}, there exist some $T>0$ depending on $\theta, V$ and $\rho(\cdot,0)$, such that $\rho(x,t)>\rho_0/2$ for all  $x\in\mathbb{T}_L^d, t>T$. Then for all $t_2> t_1>T$, Eq.\eqref{f_decay_int} becomes
\begin{eqnarray} 
\label{integral_inequality}
\nonumber\mathcal{F}_\theta(\rho(\cdot , t_1))-\mathcal{F}_\theta(\rho(\cdot ,t_2))&\geq& \int_{t_1}^{t_2} \int_{\mathbb{T}_L^d} \frac{\rho_0}{2} |\nabla(2\rho+\theta  \rho*V)|^2 dx dt\\
\nonumber&=& 2\rho_0 \int_{t_1}^{t_2}\frac{1}{L^d} \sum_{k} |k|^2 |\hat{\rho}(k)|^2 (1+\frac{1}{2}\theta \hat{V}(k)) ^2 dt\\
\nonumber&\geq& 
\rho_{0}c^{\prime}
 \int_{t_1}^{t_2} \frac{1}{L^d} \sum_{k \neq 0} | \hat{\rho}(k)|^2 (1+\frac{1}{2}\theta \hat{V}(k)) dt\\
&=& \rho_{0}c'\int_{t_1}^{t_2} (\mathcal{F}_\theta(\rho(\cdot, t))-\mathcal{F}_\theta(\rho_{0}) )dt, 
\end{eqnarray}
where $c' = 2 \min_{k\neq0}|k^{2}|(1+\frac{1}{2}\theta \hat{V}(k))$,
which is positive when 
$\theta<\theta^{\sharp}$.

In the spirit of Remark \ref{GHT} we may regard the above as a differential inequality for 
$g(t) :=\mathcal F_{\theta}(\rho(\cdot, t)) - \mathcal F_{\theta}(\rho_{0})$; the inequality reads
$$
-\frac{dg}{dt} \geq \rho_{0}c^{\prime}g(t).
$$
This immediately integrates to yield 
$g(t) \leq g(T)\text{exp}\{-\rho_{0}c^{\prime}(t- T)\}$ for $t \geq T$.  I.e., 
$$
\mathcal F(\rho(\cdot, t)) - \mathcal F(\rho_{0})  \leq  
C\text{e}^{-\rho_{0}c^{\prime}t}.
$$
Since $\mathcal{F}_\theta(\rho(\cdot,t)) - \mathcal{F}_\theta(\rho_0)$ is comparable to $\|\rho(t)-\rho_0\|_{L^2}$, we have $\|\rho(t)-\rho_0\|_{L^2}\to 0$ exponentially with the same rate.
\end{proof}

\begin{remark}
It is remarked that, via comparison to linearized theory, 
the above is essentially optimal.  (The results differ by a factor of two which comes from the definition of $T =: T_{1/2}$.  Using 
$T_{\epsilon} = 
\sup\{t > 0\mid||\rho(\cdot, t) - \rho_{0}||_{L^{\infty}(\mathbb T_{L}^{d})}
 > \epsilon \rho_{0}\}$, the \textit{long} time asymptotic rates are actually in complete agreement.)  Moreover, while for $L$ of order unity, the result stands:
 $c^{\prime}$ -- with or without an additional factor of two --
 might well be optimized at a wave number of order unity.  However, as 
 $L\to\infty$, it is clear that
 $$
 \min_{k\neq 0}|k|^{2}(1 + \frac{1}{2}\hat{V}(k))\to (\frac{2\pi}{L})^{2}
 (1 + \frac{1}{2}\hat{V}(0)).
 $$
 So, in particular, for large $L$ the rate scales as 
 $L^{-(d+2)}$ -- a result which may be an artifact of our normalization.
\end{remark}

\subsection{Some remarks on the supercritical case, when $m=2$}

When $\theta > \theta^\sharp$, we have $1+ \frac{1}{2}\theta \hat{V}(k_0)<0$ for some $k_0 = \frac{2\pi}{L}\vec n_0$, where $\vec n_0 \in \mathbb{Z}^d$. In other words, at least one of the coefficients of the free energy Eq.\eqref{f_fourier} is negative. In the next proposition we show that in this case the constant solution $\rho_0$ is not linearly stable.

\begin{proposition}Suppose $m=2$ and $\theta<\theta^{\sharp}$, where $\theta^\sharp$ is as given in Eq.\eqref{theta_sharp_definition}. Consider an interaction V that satisfies the assumptions \textbf{(V1)} and \textbf{(V2)}.   Then the constant solution $\rho_0$ is not a local minimizer of the free energy Eq.\eqref{f_fourier} in $\mathscr{P}$.
\end{proposition}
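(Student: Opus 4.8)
The plan is to exhibit, arbitrarily close to $\rho_0$ in $\mathscr{P}$, a one-parameter family of admissible densities along which the free energy $\mathcal{F}_\theta$ strictly decreases; this rules out local minimality, and does so simultaneously for every topology one might reasonably attach to $\mathscr{P}$. The only structural input is the fact established at the start of this subsection: there is a nonzero mode $k_0 = \tfrac{2\pi}{L}\vec n_0$ whose coefficient $1 + \tfrac{1}{2}\theta\hat V(k_0)$ in the Fourier expansion \eqref{f_fourier} is strictly negative. Since assumption \textbf{(V1)} forces $\hat V$ to be real and even, the mode $-k_0$ carries the same negative coefficient.

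First I would fix such a $k_0$ and, for $0 < \epsilon < \epsilon_0 := \rho_0/2 = \tfrac{1}{2}L^{-d}$, set $\rho_\epsilon(x) := \rho_0 + \epsilon\,\text{e}^{ik_0\cdot x} + \epsilon\,\text{e}^{-ik_0\cdot x} = \rho_0 + 2\epsilon\cos(k_0\cdot x)$. Then I would check that $\rho_\epsilon \in \mathscr{P}$: it is real and bounded, hence in $L^1 \cap L^2$; it is nonnegative because $|2\epsilon\cos(k_0\cdot x)| \le 2\epsilon \le \rho_0$; and it has total mass $1$, since $k_0 \ne 0$ makes $\int_{\mathbb{T}_L^d}\cos(k_0\cdot x)\,dx = 0$. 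In Fourier variables this reads $\hat\rho_\epsilon(0) = 1$, $\hat\rho_\epsilon(\pm k_0) = \epsilon L^d$, and $\hat\rho_\epsilon(k) = 0$ otherwise.

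Next I would substitute this into \eqref{f_fourier}. Only the modes $0$ and $\pm k_0$ contribute, and using $\mathcal{F}_\theta(\rho_0) = L^{-d}(1 + \tfrac{1}{2}\theta\hat V(0))$ together with $\hat V(k_0) = \hat V(-k_0)$ one obtains $\mathcal{F}_\theta(\rho_\epsilon) - \mathcal{F}_\theta(\rho_0) = 2\epsilon^2 L^d\big(1 + \tfrac{1}{2}\theta\hat V(k_0)\big)$, which is strictly negative for every $\epsilon \in (0,\epsilon_0)$ by the choice of $k_0$. Finally, since $\|\rho_\epsilon - \rho_0\|_{L^\infty(\mathbb{T}_L^d)} \le 2\epsilon \to 0$ as $\epsilon \downarrow 0$, the family $\rho_\epsilon$ converges to $\rho_0$ in $L^2(\mathbb{T}_L^d)$ and in the Wasserstein metric as well (the torus being bounded). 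Hence every neighbourhood of $\rho_0$ contains some $\rho_\epsilon$ with $\mathcal{F}_\theta(\rho_\epsilon) < \mathcal{F}_\theta(\rho_0)$, so $\rho_0$ is not a local minimizer of $\mathcal{F}_\theta$ in $\mathscr{P}$.

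No step here presents a genuine obstacle: the energy comparison is a two-line computation once the test family is chosen. The only points that need mild care are (i) verifying that $\rho_\epsilon$ really lies in $\mathscr{P}$ -- this is precisely why the perturbation must be built from a nonzero mode and why $\epsilon$ is kept below $\rho_0/2$, so that nonnegativity and unit mass both survive -- and (ii) being explicit about which topology underlies ``local minimizer'': because $\rho_\epsilon \to \rho_0$ in $L^\infty$, the conclusion holds at once for the $L^2$ and the $W_2$ notions, which are the ones natural to \eqref{f_fourier} and to the gradient-flow structure of \eqref{pde}, respectively.
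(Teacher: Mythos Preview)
Your proof is correct and follows essentially the same approach as the paper: choose a nonzero mode $k_0$ with $1+\tfrac12\theta\hat V(k_0)<0$, perturb $\rho_0$ by a small multiple of $\cos(k_0\cdot x)$, and read off from the Fourier form \eqref{f_fourier} that the energy drops. Your version is somewhat more careful than the paper's in that you explicitly verify $\rho_\epsilon\in\mathscr{P}$ (nonnegativity via the bound $\epsilon<\rho_0/2$, unit mass) and you address the topology in which local minimality fails, but the underlying argument is identical.
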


\begin{proof}
We choose $k_0= \frac{2\pi}{L}\vec n_0$ such that $1+ \frac{1}{2}\theta  \hat{V}(k_0)<0$,  where $\vec n_0 \in \mathbb{Z}^d$.  We add a small pertubation $\epsilon \eta$ to the constant solution $\rho_0$, where 
$$\eta := \cos(\frac{2\pi n_0 \cdot x}{L}).$$
Then
\begin{eqnarray*}
\mathcal{F}_\theta(\rho_0+\epsilon \eta) = \mathcal{F}_\theta(\rho_0) + L^d \epsilon^2(1+\frac{1}{2}\theta  \hat V(k_0)),
\end{eqnarray*}
which is strictly less than $\mathcal{F}_\theta(\rho_0)$ by the defining property of $k_0$.
\end{proof}
\begin{remark}\label{touch_zero} In fact, using the same perturbation term in the proof, we would know that when $\theta>\theta^\sharp$, any strictly positive function is not a local minimizer of the free energy Eq.\eqref{f_fourier}.
\end{remark}

In the supercritical case, while Eq.\eqref{f_fourier} immediately implies that $\rho_0$ is not a local minimizer of $\mathcal{F}_\theta$ in $\mathscr{P}$, it gives us little information about what is the global minimizer. The difficulty comes from the restriction $\rho(x)\geq 0$ for all $x$, which evidently plays an important role in the supercritical case, since any minimizer should touch zero somewhere due to Remark \ref{touch_zero}.   After Fourier transform, the non-negativity of $\rho$ actually gives us infinite numbers of restrictions, which causes the difficulty.

\section{Exponential decay for $1<m<2$ and 
weak interaction}
In this section, we continue our study of Eq.(\ref{pde}) with $m\in (1,2)$ and
here we will assume that  $\theta$ is ``small''.  Unfortunately, $\theta$ will 
\textit{not} be uniformly small in volume.  In particular, we shall require 
$\theta L^{d(2-m)}$ to be a small number of order unity and, under these conditions
we shall acquire all the results of the previous section.  We claim that without additional (physics based) assumptions -- in particular H--stability of the interaction -- the above condition is essentially optimal.  Specifically, our cornerstone result of a unique stationary state does not hold for non--H--stable interactions when 
$\theta L^{d(2-m)}$ is a sufficiently \textit{large} number of order unity.   
However, from an \ae sthetic perspective, this uniqueness result 
is the sole instance where $\theta L^{d(2-m)}$ must be considered small. 
In the aftermath of Proposition \ref{YGY} 
and its corollary, we will only require 
$\theta$ itself to be a small quantity.

We start with a priliminary result (which is, actually, just a quantitative version of 
the argument used in Lemma \ref{YUY} in the vicinity of Eq.(\ref{WQW})).
\begin{proposition}
\label{YGY}
Consider an interaction V that satisfies the assumptions \textbf{(V1)} and \textbf{(V2)}.
Let 
$$
\varepsilon_{0} := \theta L^{d(2-m)}
$$
be a sufficiently small number of order unity.  Let $\rho$ denote any solution to the 
Kirkwood--Monroe equations which here read, whenever $\rho > 0$, 
$$
\nabla \rho^{m-1}  =  -\varepsilon_{0}\frac{m-1}{m}\rho*\nabla V
$$
and let 
$$
R := \|\rho\|_{L^{\infty}(\mathbb T_{L}^{d})}.
$$
Then if $\varepsilon_{0}$ is a small number of order unity then $R$ is also a small number of order unity (if $L$ is large).  In particular, 
$$
R \leq 
\kappa_{4}\max\{
[\varepsilon_{0}]^{\frac{d}{d(m-1)+1}},
L^{-d}
\}
$$
with $\kappa_{4}$ a constant of order unity.
\end{proposition}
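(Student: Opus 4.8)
The plan is to extract pointwise information about $\rho$ from the Kirkwood--Monroe equation together with the two conserved/controlled quantities: the mass $\int \rho = 1$ and the a~priori bound $\|\rho*\nabla V\|_{L^\infty}\leq \|\rho\|_{L^1}\|\nabla V\|_{L^\infty}\leq C$ coming from \textbf{(V2)}. First I would observe, exactly as in the argument around Eq.~\eqref{WQW} in Lemma~\ref{YUY}, that on each connected component of the open set $\{\rho>0\}$ the function $\rho^{m-1}$ is Lipschitz with a uniform constant: indeed $|\nabla\rho^{m-1}|\leq \varepsilon_0\tfrac{m-1}{m}\|\rho*\nabla V\|_\infty\leq \varepsilon_0\tfrac{m-1}{m}C =: \Lambda$, and since $\rho^{m-1}$ vanishes on the boundary of each component (by continuity of $\rho$, which we have from Theorem~\ref{unif_cont_rho}, or simply from $\rho^{m-1}$ being Lipschitz up to the boundary of the component), the same Lipschitz bound extends globally: for any $x,y$, travelling along the segment from $x$ to $y$ and cutting at the first/last boundary points of the relevant components gives $|\rho^{m-1}(x)-\rho^{m-1}(y)|\leq \Lambda|x-y|$.

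Next I would turn this global Lipschitz estimate on $\rho^{m-1}$ into the desired bound on $R=\|\rho\|_\infty$. Let $x_0$ be a point where $\rho$ is (nearly) maximal, so $\rho^{m-1}(x_0)=R^{m-1}$. The Lipschitz bound gives $\rho^{m-1}(x)\geq R^{m-1}-\Lambda|x-x_0|$, hence $\rho(x)\geq (R^{m-1}-\Lambda|x-x_0|)_+^{1/(m-1)}$ on a ball of radius $r_0 := R^{m-1}/\Lambda$ about $x_0$ (intersected with $\mathbb T_L^d$, so really of radius $\min\{r_0, cL\}$). Integrating this lower bound and using $\int\rho=1$ yields
\begin{equation*}
1 \;\geq\; \int_{B(x_0,\min\{r_0,cL\})}\big(R^{m-1}-\Lambda|x-x_0|\big)_+^{1/(m-1)}\,dx \;\geq\; c_d\, R^{m-1}\cdot \big(\min\{r_0,cL\}\big)^{d}\cdot\frac{1}{(\text{something of order }1)},
\end{equation*}
more precisely a lower bound of the form $c_d R\,\big(\min\{R^{m-1}/\Lambda,\,cL\}\big)^d$ after the substitution $x-x_0 = (R^{m-1}/\Lambda)z$ on the ball and explicit integration of $(1-|z|)_+^{1/(m-1)}$. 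Splitting into the two cases: if the minimum is $R^{m-1}/\Lambda$, we get $1\gtrsim R^{1+d(m-1)}\Lambda^{-d}$, i.e. $R\lesssim \Lambda^{d/(d(m-1)+1)}\sim [\varepsilon_0]^{d/(d(m-1)+1)}$; if the minimum is $cL$, we get $1\gtrsim R\, L^d$, i.e. $R\lesssim L^{-d}$. Taking the larger of the two possibilities gives the stated bound $R\leq \kappa_4\max\{[\varepsilon_0]^{d/(d(m-1)+1)}, L^{-d}\}$.

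The main technical point — and the place to be careful — is the passage from the local (per-component) Lipschitz estimate to a \emph{global} one; this is where the continuity of $\rho$ (hence vanishing of $\rho^{m-1}$ on component boundaries) is essential, and it must be invoked before any Fourier or integral manipulation. The regularity needed is supplied by Theorem~\ref{unif_cont_rho} applied with the a~priori potential $\Phi = \varepsilon_0 L^{-d}\cdot(L^{d}\cdots)$, or more simply by noting that a stationary $\rho$ with $\rho^{m-1}$ Lipschitz on each component is automatically continuous. A secondary point is that the segment argument requires the shortest path between $x$ and $y$ in $\mathbb T_L^d$, along which ``lying in order on the same line'' still makes sense; and when forming the integral lower bound one must remember that $B(x_0,r_0)$ may wrap around the torus, which is precisely why the truncation at radius $\sim L$ (and the resulting $L^{-d}$ alternative) appears. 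Everything else is the elementary one-variable integral $\int_0^1 (1-s)^{1/(m-1)}s^{d-1}\,ds$, a Beta function of order unity since $m>1$, which supplies the constant $\kappa_4$.
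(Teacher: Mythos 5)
Your proposal follows essentially the same route as the paper's proof: the uniform gradient bound $|\nabla\rho^{m-1}|\leq \kappa_1\varepsilon_0$ from the Kirkwood--Monroe equation and \textbf{(V2)}, the resulting pointwise lower bound $\rho^{m-1}(x)\geq R^{m-1}-\kappa_1\varepsilon_0|x-x_0|$ about the maximizing point $x_0$, integration of this lower bound against the normalization $\int\rho=1$, and finally the case split on whether the controlled radius $\sim R^{m-1}/(\kappa_1\varepsilon_0)$ exceeds $L$, which is exactly what produces the $\max\{[\varepsilon_0]^{d/(d(m-1)+1)}, L^{-d}\}$ alternative. The only (minor) difference is presentational: you explicitly re-invoke the segment-cutting argument from around Eq.~\eqref{WQW} to promote the per-component Lipschitz estimate to a global one, and you integrate the exact profile $(R^{m-1}-\Lambda|x-x_0|)_+^{1/(m-1)}$; the paper instead simply works with the sublevel set where $\rho^{m-1}\geq \frac12 R^{m-1}$, which is enough for a one-sided bound and avoids the Beta-function computation. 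Both are correct and give the same estimate up to constants.
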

\begin{proof}
From the mean--field equations,
$$
|\nabla\rho^{m-1}| \leq  \frac{m-1}{m}
\varepsilon_{0}\int_{\mathbb T_{L}^{d}}
|\nabla V(x-y)|\rho(y)dy
\leq \frac{m-1}{m}\|V\|_{C_{1}}\varepsilon_{0} =: \kappa_{1}\varepsilon_{0}.
$$

Let $x_{0}$ mark the spot where $\rho$ achieves $R$.  Then, for all $x$,
$$
\rho^{m-1}(x)  \geq 
R^{m-1} - \kappa_{1}\varepsilon_{0}|x-x_{0}|.
$$
Thus, if $r$ is the length scale of the region about $x_{0}$ where
$\rho^{m-1}$ exceeds, a.e., $\frac{1}{2}R^{m-1}$ 
we have 
$$
r \geq
\frac{R^{m-1}}{2\kappa_{1}\varepsilon_{0}}
$$
provided the right hand side does not exceed $L$.  Otherwise, obviously, $r = L$.
Since $\rho$ integrates to unity we have, assuming $r < L$,
$$
1 = \int_{\mathbb T_{L}^{d}}\rho dx
\geq \kappa_{2}r^{d}R
\geq \kappa_{2}
\frac{R^{d(m-1) + 1}}{(2\kappa_{1}\varepsilon_{0})^{d}} =: \frac{1}{\kappa_{3}^{d}}
\frac{1}{\varepsilon_{0}^{d}} R^{d(m-1) +1}
$$
(with $\kappa_{2}$ a geometric constant of order unity) and otherwise we acquire the mundane bound.  After a small step, the stated bound is obtained with an appropriate definition of $\kappa_{4}$.  
\end{proof}

With the above in hand, we can establish that $\rho_{0}$ is the unique stationary solution.  We start with 
\begin{corollary}
\label{uniqueness_KM}
Under the conditions stated in Proposition \ref{YGY}, 
if $\varepsilon_{0}$ is sufficiently small -- but of order unity independent of $L$ --
the unique solution to the 
mean--field equations is $\rho = \rho_{0}$.
\end{corollary}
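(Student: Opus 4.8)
The plan is to show that, once $\varepsilon_{0}$ is a sufficiently small constant \emph{independent of $L$}, every solution $\rho$ of the mean--field equations must be strictly positive, and then to pin it down as $\rho_{0}$ by a short monotonicity estimate; write $W:=\rho*V$. First I would record the structure of solutions, exactly as in the paragraph around Eq.\eqref{WQW}: since $\|\rho\|_{L^{1}}=1$, hypothesis \textbf{(V2)} gives $\|W\|_{C^{2}(\mathbb T_{L}^{d})}\le\|V\|_{C^{2}(\mathbb T_{L}^{d})}$, so on each connected component $\mathcal C$ of $\{\rho>0\}$ the relation $\nabla\rho^{m-1}=-\varepsilon_{0}\frac{m-1}{m}\nabla W$ integrates to $\rho^{m-1}+\varepsilon_{0}\frac{m-1}{m}W\equiv\lambda_{\mathcal C}$ for some constant $\lambda_{\mathcal C}$. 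In particular $\rho^{m-1}$ is $C^{1}$ on $\mathcal C$; and since $|\nabla\rho|\le\frac{1}{m-1}\|\rho\|_{L^{\infty}}^{\,2-m}|\nabla\rho^{m-1}|$ is bounded by a constant not depending on the component (using the $L^{\infty}$ bound of Proposition~\ref{YGY}), the interpolation argument of Eq.\eqref{WQW} shows $\rho$ is globally Lipschitz on $\mathbb T_{L}^{d}$, hence continuous up to each $\partial\mathcal C$, where it vanishes.

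Next I would establish strict positivity. Suppose $\rho$ vanishes somewhere; since $\int\rho=1$ we have $\rho\not\equiv 0$, so $R:=\|\rho\|_{L^{\infty}}>0$ is attained at some $p$ lying in a \emph{proper} component $\mathcal C$ of $\{\rho>0\}$. Evaluating $\rho^{m-1}+\varepsilon_{0}\frac{m-1}{m}W\equiv\lambda_{\mathcal C}$ at $p$ and at a point $z\in\partial\mathcal C$ (where $\rho=0$) gives
$$R^{m-1}=\varepsilon_{0}\tfrac{m-1}{m}\big(W(z)-W(p)\big)\ \le\ \varepsilon_{0}\tfrac{m-1}{m}\,\operatorname{osc}(W).$$
Since $\rho\ge 0$, $\operatorname{osc}(W)\le 2\|W\|_{L^{\infty}}\le 2\|\rho\|_{L^{\infty}}\|V\|_{L^{1}(\mathbb T_{L}^{d})}=2R\|V\|_{L^{1}(\mathbb T_{L}^{d})}$, and $\|V\|_{L^{1}(\mathbb T_{L}^{d})}$ is of order unity (no wrapping). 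Cancelling one power of $R$ gives $R^{\,2-m}\ge\frac{m}{2(m-1)\|V\|_{L^{1}}}\varepsilon_{0}^{-1}$, which contradicts the upper bound $R\le\mathrm{const}$ of Proposition~\ref{YGY} as soon as $\varepsilon_{0}$ is below an $L$--independent constant. Hence $\rho>0$ on all of $\mathbb T_{L}^{d}$.

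With positivity in hand, $\{\rho>0\}=\mathbb T_{L}^{d}$ is connected, so $\rho^{m-1}+\varepsilon_{0}\frac{m-1}{m}\rho*V\equiv\lambda$ for a single constant $\lambda$, and likewise $\rho_{0}^{m-1}+\varepsilon_{0}\frac{m-1}{m}\rho_{0}*V\equiv\lambda_{0}$. Subtract, multiply by $h:=\rho-\rho_{0}$ and integrate over $\mathbb T_{L}^{d}$; the constant term drops because $\int h\,dx=0$, leaving
$$\int_{\mathbb T_{L}^{d}}(\rho-\rho_{0})\big(\rho^{m-1}-\rho_{0}^{m-1}\big)\,dx\ =\ -\,\varepsilon_{0}\tfrac{m-1}{m}\int_{\mathbb T_{L}^{d}}h\,(h*V)\,dx .$$
With $\bar R:=\max\{\|\rho\|_{L^{\infty}},\rho_{0}\}$ (bounded by an $L$--independent constant via Proposition~\ref{YGY}), monotonicity of $t\mapsto t^{m-1}$ on $[0,\bar R]$ bounds the left side below by $(m-1)\bar R^{\,m-2}\|h\|_{L^{2}}^{2}$, while Cauchy--Schwarz and Young's convolution inequality $\|h*V\|_{L^{2}}\le\|h\|_{L^{2}}\|V\|_{L^{1}}$ bound the right side above by $\varepsilon_{0}\frac{m-1}{m}\|V\|_{L^{1}}\|h\|_{L^{2}}^{2}$. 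If $h\not\equiv 0$ this forces $\bar R^{\,2-m}\ge\frac{m}{\|V\|_{L^{1}}}\varepsilon_{0}^{-1}$, again impossible for $\varepsilon_{0}$ a small $L$--independent constant; hence $\rho\equiv\rho_{0}$.

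The one genuinely delicate step is strict positivity: a priori a solution could be a low, broad compactly supported bump, and the corollary stands or falls on excluding this using smallness of $\varepsilon_{0}$ alone. What makes it work is that $\operatorname{osc}(\rho*V)$ is controlled by $\|\rho\|_{L^{\infty}}\|V\|_{L^{1}}$ -- which does not grow with $L$ -- rather than by $\|\rho\|_{L^{\infty}}$ times the diameter of $\mathbb T_{L}^{d}$, so the lower bound on $R$ it yields beats the uniform upper bound of Proposition~\ref{YGY} without any reference to $L$. Once positivity is granted, the uniqueness estimate is just the convexity mechanism that already singles out $\rho_{0}$ as the unique energy minimizer.
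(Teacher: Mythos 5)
Your argument is correct, and it reaches the conclusion by a genuinely different route from the paper. The paper's proof never establishes strict positivity of a stationary solution: it multiplies the gradient form of the Kirkwood--Monroe identity by $\nabla\rho$, uses that $\nabla\rho$ vanishes a.e.\ on $\{\rho=0\}$ to avoid any issue with the possibly disconnected support, bounds $\nabla\rho\cdot\nabla\rho^{m-1}\geq (m-1)R_{\varepsilon_0}^{m-2}|\nabla\rho|^2$, and passes to Fourier modes to read off that the coefficient of each $|\hat\rho(k)|^2$ is strictly positive once $R_{\varepsilon_0}$ is small, forcing $\hat\rho(k)=0$ for $k\neq 0$. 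You instead first prove that any solution is strictly positive, by evaluating the first integral $\rho^{m-1}+\varepsilon_0\tfrac{m-1}{m}W\equiv\lambda_{\mathcal C}$ at the maximum point and at a boundary point of a hypothetical proper component and comparing with the $\mathrm{osc}(W)\leq 2R\|V\|_{L^1}$ bound; once $\{\rho>0\}=\mathbb T_L^d$ is connected you may write a single global first integral, subtract the one for $\rho_0$, multiply by $h=\rho-\rho_0$, and close by MVT and Young's convolution inequality rather than by Fourier. Both routes terminate in the same dichotomy -- $R_{\varepsilon_0}^{2-m}\gtrsim\varepsilon_0^{-1}$ versus the smallness of $R_{\varepsilon_0}$ supplied by Proposition~\ref{YGY} -- so the payoff is comparable, but your version buys strict positivity of stationary states (a fact the paper's proof doesn't produce and doesn't need) at the cost of needing the first-integral form, which only makes sense componentwise and hence forces the positivity lemma; the paper's gradient-weighted argument sidesteps that entirely. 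One small point: the positivity step is genuinely load-bearing in your approach, because without a connected support one cannot subtract a single constant $\lambda$ before pairing against $h$, so be explicit that this is why the lemma is needed rather than merely convenient.
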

\begin{proof}
From the mean--field equation, we may write
$$
0 = 
\int_{\mathbb T_{L}^{d}}\nabla\rho\cdot\nabla(\rho^{m-1} + \varepsilon_{0}\frac{m-1}{m}
\rho*V
)dx.
$$
By recapitulating the Lipchitz continuity that was featured in the vicinity of Eq.(\ref{WQW})
we have full justification to manipulate classically under the integral.  Letting 
$R_{\varepsilon_{0}}$ denote the upper bound on the $L^{\infty}$ norm of $\rho$
that was featured in Proposition \ref{YGY}.  Then, pointwise a.e.~on the support of $\rho$,
$$
\nabla \rho\cdot \nabla \rho^{m-1}  =  \frac{m-1}{\rho^{2-m}}|\nabla \rho|^{2}
\geq \frac{m-1}{R_{\varepsilon_{0}}^{2-m}}|\nabla \rho|^{2}
$$
since, we remind the reader, $2-m > 0$.
In other words,
$$
0 \geq \int_{\mathbb T_{L}^{d}}\frac{1}{R_{\varepsilon_{0}}^{2-m}}|\nabla \rho|^{2}
+ \frac{\varepsilon_{0}}{m}\nabla \rho\cdot \nabla (\rho*V)dx.
$$
We can again go to Fourier modes and the above reads
$$
0 \geq
\sum_{k\neq 0}k^{2}|\hat{\rho}(k)|^{2}[\frac{1}{R_{\varepsilon_{0}}^{2-m}} + 
\frac{\varepsilon_{0}}{m}\hat{V}(k)].
$$
For $\varepsilon_{0}$ sufficiently small (but of order unity independent of $L$)
the coefficient of $|\hat{\rho}(k)|^{2}$ is positive for all terms so the later must vanish identically.  The desired result is proved.
\end{proof}

Based on the fact that $\rho_0$ is the unique stationary solution, in the next lemma we prove that $\rho(\cdot, t)$ will converge to $\rho_0$ uniformly, 
(but not with a quantitative estimate on the \textit{rate}.)

\begin{lemma}\label{unif_conv_m<2}
Suppose the 
conclusions in Corollary \ref{uniqueness_KM}
are satisfied.  Let $\rho(x,t)$ be the weak solution to Eq.\eqref{pde} on $[0,\infty)\times\mathbb{T}_L^d$, with non-negative initial data $\rho(x,0)\in L^\infty(\mathbb{T}_L^d)$ which integrates to 1.  Then $\sup_x |\rho(\cdot, t)-\rho_0|\to 0$ as $t \to \infty$.
\end{lemma}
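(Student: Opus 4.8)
The plan is to mimic the argument used for $m=2$ in Lemma \ref{YUY} and Corollary \ref{uniform_pos}, but replacing the Fourier-mode conclusion by the uniqueness statement of Corollary \ref{uniqueness_KM}. First I would invoke the energy-dissipation identity Lemma \ref{decrease}: since $\mathcal F_\theta(\rho(\cdot,t))$ is continuous and non-increasing and bounded below (say by $\mathcal F_\theta(\rho_0)$, or simply because the energy is bounded below on $\mathscr P$ when $\varepsilon_0$ is small), the dissipation integral
$\int_0^\infty \int_{\mathbb T_L^d}\rho|\nabla(\frac{m}{m-1}\rho^{m-1}+\theta L^{d(2-m)}\rho*V)|^2\,dx\,dt$
is finite. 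Hence there is a sequence $t_n\to\infty$ along which the inner integral tends to zero. Using Theorem \ref{uniform_bdd} (uniform $L^\infty$ bound) and Theorem \ref{unif_cont_rho_2} / \cite{dib} (uniform equicontinuity in $x$), Arzel\`a--Ascoli produces a further subsequence with $\rho(\cdot,t_n)\to\rho_\infty$ uniformly for some continuous $\rho_\infty\in\mathscr P$.

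Next I would identify $\rho_\infty$ as a solution of the Kirkwood--Monroe (mean-field) equations. As in Lemma \ref{YUY}, I would first establish a uniform $L^2$ bound on $\nabla\rho_n^{(m+1)/2}$ (the natural quantity making the dissipation integrand a perfect square up to the drift term, which is controlled since $\rho_n$ is bounded and $\|V\|_{C^2}<\infty$), extract weak $L^2$-convergence $\nabla\rho_n^{(m+1)/2}\rightharpoonup \nabla\rho_\infty^{(m+1)/2}$, and combine with the uniform convergence of $\rho_n$ to pass to the limit in the dissipation functional; this gives
$\int_{\mathbb T_L^d}\rho_\infty|\nabla(\frac{m}{m-1}\rho_\infty^{m-1}+\theta L^{d(2-m)}\rho_\infty*V)|^2\,dx=0$,
so that $\nabla(\frac{m}{m-1}\rho_\infty^{m-1}+\varepsilon_0\rho_\infty*V)=0$ a.e.\ on the support of $\rho_\infty$; rescaling gives exactly the Kirkwood--Monroe equation $\nabla\rho_\infty^{m-1}=-\varepsilon_0\frac{m-1}{m}\rho_\infty*\nabla V$ on $\{\rho_\infty>0\}$. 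Then Corollary \ref{uniqueness_KM} forces $\rho_\infty\equiv\rho_0$. Since this holds for every convergent subsequence of every sequence $t_n\to\infty$ with vanishing dissipation, and in fact the whole family $\{\rho(\cdot,t)\}_{t\geq 0}$ is precompact in $C(\mathbb T_L^d)$ by the uniform bound and equicontinuity, a standard subsequence-of-subsequence argument (exactly as in Corollary \ref{uniform_pos}: if $\sup_x|\rho(\cdot,\tau_n)-\rho_0|>\delta$ along some $\tau_n\to\infty$, extract a uniformly convergent subsequence whose limit must be $\rho_0$, a contradiction) upgrades this to $\sup_x|\rho(\cdot,t)-\rho_0|\to 0$.

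The main obstacle is the limit-identification step: for $1<m<2$ the pressure variable $\rho^{m-1}$ and the density $\rho$ are related nonlinearly and degenerately, so one must be careful that (i) the weak limit of $\nabla\rho_n^{(m+1)/2}$ (or $\nabla\rho_n^{m-1}$, on the positive set) is genuinely $\nabla\rho_\infty$ of the corresponding power — which uses the uniform $L^\infty$ bound together with uniform convergence to control the composition — and (ii) that "the gradient vanishes a.e.\ on the support" can be converted into the Kirkwood--Monroe equation and then into the integration-by-parts identity $\int\nabla\rho_\infty\cdot\nabla(\rho_\infty^{m-1}+\cdots)=0$. This last point requires the Lipschitz-continuity argument for $\rho_\infty$ carried out near Eq.\eqref{WQW}: on each component where $\rho_\infty>0$ the mean-field equation plus \textbf{(V2)} gives a uniform Lipschitz (indeed $C^2$) bound, and the across-components estimate via boundary points where $\rho_\infty=0$ yields global Lipschitz continuity, placing $\rho_\infty\in W^{1,\infty}(\mathbb T_L^d)$ and legitimizing the manipulations that feed into Corollary \ref{uniqueness_KM}. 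Everything else is a transcription of the $m=2$ machinery.
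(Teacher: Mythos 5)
Your proposal follows essentially the same route as the paper: the paper's proof is just a terse pointer to the transplantation of Lemma \ref{YUY} (finite dissipation $\Rightarrow$ a subsequential uniform limit solving the Kirkwood--Monroe equation) combined with Corollary \ref{uniqueness_KM} and the compactness/contradiction argument of Corollary \ref{uniform_pos}, and you have correctly unpacked all three ingredients, including the Lipschitz-regularity point needed to justify the integration by parts. One algebraic slip worth fixing: the power that makes $\rho\,|m\rho^{m-2}\nabla\rho|^2 = m^2\rho^{2m-3}|\nabla\rho|^2$ a perfect gradient is $\alpha = (2m-1)/2 = m-\tfrac12$, not $(m+1)/2$; the two coincide only at $m=2$, where both give $3/2$.
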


\begin{proof}
This is more or less identical to the proof of 
Corollary \ref{uniform_pos} based on Lemma \ref{YUY} so we shall be succinct.  
Assuming the result false, we could find a sequence of times 
$t_{n}\to\infty$
and points
$x_{n}\to x_{\infty} \in \mathbb T_{L}^{d}$
such that 
$\rho(\cdot,t_{n})$ converges uniformly
and yet 
$|\rho(x_{n}, \tau_{n}) - \rho_{0}| > \delta$.
So, denoting by 
$\rho_{\infty}(\cdot)$
the uniform limit, we would have
$|\rho_{\infty}(x_{\infty}) - \rho_{0}| > \delta$.

Hence, since $\rho_{\infty}$ is continuous, it is definitively
\textit{not} equal to $\rho_{0}$.  
However, any subsequential limit must satisfy the mean--field equation
and by Corollary \ref{uniqueness_KM} this is uniquely $\rho_{0}$
 in contradiction with the preceding.  
 This completes the proof.
\end{proof}

In the next lemma, we show that once $\rho$ and $\rho_0$ becomes comparable, $\mathcal F_{\theta}(\rho) - \mathcal F_{\theta}(\rho_{0})$ also becomes comparable with $L^{d(2-m)}\|\rho-\rho_0\|_{L^2(\mathbb{T}_L^d)}$. 
 Indeed, as alluded to earlier, this will be proved under the weaker assumption 
 that $\theta$ --
not $\theta L^{d(m-2)}$ -- is small.  We start with:

\begin{lemma}\label{lemma_comparable}
Suppose that $\theta>0$ is sufficiently small (but of order unity independent of $L$). Let $\rho$ be such that
$\|\rho - \rho_{0}\|_{\mathbb T_{L}^{d}} < \frac{1}{2}\rho_{0}$. Then we have
\begin{equation}\label{comparable_energy}
\alpha L^{d(2-m)}\|\rho - \rho_{0}\|^{2}_{L^{2}(\mathbb T_{L}^{d})} \leq \mathcal F_{\theta}(\rho) - \mathcal F_{\theta}(\rho_{0}) \leq
\beta L^{d(2-m)}\|\rho - \rho_{0}\|^{2}_{L^{2}(\mathbb T_{L}^{d})}
\end{equation}
for some $\alpha, \beta > 0$ of order unity.  
\end{lemma}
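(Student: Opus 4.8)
The plan is to set $\phi := \rho - \rho_{0}$, which has $\int_{\mathbb{T}_L^d}\phi\,dx = 0$ since $\rho$ and $\rho_{0}$ both have mass $1$, and to split the energy difference as $\mathcal F_\theta(\rho) - \mathcal F_\theta(\rho_{0}) = A + B$ where
\[
A := \frac{1}{m-1}\int_{\mathbb{T}_L^d}(\rho^m - \rho_{0}^m)\,dx, \qquad B := \frac{\theta}{2}L^{d(2-m)}\int_{\mathbb{T}_L^d}\big(\rho(V*\rho) - \rho_{0}(V*\rho_{0})\big)\,dx;
\]
the linear piece $-\tfrac{1}{m-1}\int(\rho - \rho_{0})\,dx$ of the difference has been discarded because it equals $-\tfrac{1}{m-1}\int\phi\,dx = 0$. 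I will show that $A$ is two-sidedly comparable to $L^{d(2-m)}\|\phi\|_{L^2(\mathbb{T}_L^d)}^2$ with constants depending only on $m$, that $|B| \le \tfrac{\theta}{2}\|V\|_{L^1}L^{d(2-m)}\|\phi\|_{L^2(\mathbb{T}_L^d)}^2$ with $\|V\|_{L^1} := \int_{\mathbb{T}_L^d}|V|\,dx$ a constant of order unity independent of $L$, and then take $\theta$ small to absorb $B$ into $A$.

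For $A$, write $f(s) := \tfrac{1}{m-1}(s^m - s)$, so $f''(s) = m s^{m-2}$, and use the integral form of Taylor's theorem: pointwise, $f(\rho) - f(\rho_{0}) = f'(\rho_{0})\phi + m\phi^2\int_0^1(1-t)(\rho_{0} + t\phi)^{m-2}\,dt$. Integrating in $x$ annihilates the term $f'(\rho_{0})\phi$ because $\int\phi = 0$. The hypothesis $\|\rho - \rho_{0}\|_{L^\infty(\mathbb{T}_L^d)} < \tfrac12\rho_{0}$ guarantees $\rho_{0} + t\phi \in [\tfrac12\rho_{0}, \tfrac32\rho_{0}]$ for every $t\in[0,1]$, so, since $m - 2 < 0$, the inner integral lies between $\tfrac{m}{2}(\tfrac32\rho_{0})^{m-2}$ and $\tfrac{m}{2}(\tfrac12\rho_{0})^{m-2}$; recalling $\rho_{0} = L^{-d}$ and hence $\rho_{0}^{m-2} = L^{d(2-m)}$, this gives
\[
\tfrac{m}{2}\big(\tfrac32\big)^{m-2} L^{d(2-m)}\|\phi\|_{L^2(\mathbb{T}_L^d)}^2 \;\le\; A \;\le\; \tfrac{m}{2}\big(\tfrac12\big)^{m-2} L^{d(2-m)}\|\phi\|_{L^2(\mathbb{T}_L^d)}^2 .
\]

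For $B$, expanding $\rho(V*\rho)$ with $\rho = \rho_{0} + \phi$ leaves, after noting that the cross terms integrate to zero (indeed $\int\rho_{0}(V*\phi)\,dx = \rho_{0}(\int V)(\int\phi) = 0$, and $V*\rho_{0}$ is constant so $\int\phi(V*\rho_{0})\,dx = 0$), only the purely quadratic contribution $\int_{\mathbb{T}_L^d}\phi(V*\phi)\,dx$. Since $\rho,\phi$ are bounded (hence in $L^2$) and $V*\phi \in C^2$, Parseval's identity applies in the Fourier conventions of Section 4 and gives $\int\phi(V*\phi)\,dx = \tfrac{1}{L^d}\sum_{k\ne 0}\hat V(k)|\hat\phi(k)|^2$ (the $k=0$ term drops because $\hat\phi(0)=0$); bounding $|\hat V(k)| \le \|V\|_{L^1}$ and using $\tfrac{1}{L^d}\sum_k|\hat\phi(k)|^2 = \|\phi\|_{L^2(\mathbb{T}_L^d)}^2$ yields the stated bound on $|B|$. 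Combining the three estimates, $\mathcal F_\theta(\rho) - \mathcal F_\theta(\rho_{0})$ is squeezed between $\big(\tfrac{m}{2}(\tfrac32)^{m-2} - \tfrac{\theta}{2}\|V\|_{L^1}\big)L^{d(2-m)}\|\phi\|_{L^2}^2$ and $\big(\tfrac{m}{2}(\tfrac12)^{m-2} + \tfrac{\theta}{2}\|V\|_{L^1}\big)L^{d(2-m)}\|\phi\|_{L^2}^2$, so imposing the $L$-independent smallness $\theta\|V\|_{L^1} < m(\tfrac32)^{m-2}$ makes the lower coefficient positive, and \eqref{comparable_energy} holds with, e.g., $\alpha = \tfrac{m}{4}(\tfrac32)^{m-2}$ and $\beta = \tfrac{m}{2}(\tfrac12)^{m-2} + \tfrac{\theta}{2}\|V\|_{L^1}$, both of order unity.

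There is no genuine analytic obstacle; the argument is a short convexity-plus-perturbation estimate. The one point worth stressing — and the reason this lemma requires only $\theta$, rather than $\theta L^{d(2-m)}$, to be small — is the coincidence of scales: the convexity of the entropy density at the constant state is $f''(\rho_{0}) = m\rho_{0}^{m-2} = m L^{d(2-m)}$, carrying precisely the same volume factor $L^{d(2-m)}$ that multiplies the sign-indefinite interaction term in $\mathcal F_\theta$, so the coercive entropy contribution dominates the interaction contribution uniformly in $L$ as soon as $\theta$ alone is small.
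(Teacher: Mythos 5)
Your proof is correct and follows essentially the same route as the paper's: write $\rho = \rho_{0}(1+\eta)$, obtain two-sided quadratic bounds on the entropic part using the pinching $\tfrac12\rho_{0}\le\rho\le\tfrac32\rho_{0}$ (the paper uses an elementary pointwise inequality for $(1+\eta)^m$ where you use the integral remainder form of Taylor, giving the same constants), bound the interaction term via Parseval by an order-unity constant times $L^{d(2-m)}\|\rho-\rho_0\|_{L^2}^2$ (the paper uses $[\theta^\sharp]^{-1}$ where you use $\|V\|_{L^1}$), and absorb it by taking $\theta$ small.
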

\begin{proof}
First, by any number of methods we have
$$
\int_{\mathbb T_{L}^{d} \times \mathbb T_{L}^{d}}
\rho(x)\rho(y) V(x-y)dxdy \geq
-K_{V}\|\rho - \rho_{0}\|^{2}_{L^{2}(\mathbb T_{L}^{d})};
$$
e.g., we may take, using the Fourier decomposition, $K_{V}  =  [\theta^{\sharp}]^{-1}$.  Similarly for a corresponding \textit{upper} bound with a positive constant.
Let us turn to the entropic--like terms.

Writing $\rho = \rho_{0}(1 + \eta)$, our assumption implies that $|\eta| \leq \frac{1}{2}$. From this it is easy to verify that, pointwise, 
$$
(1 + \eta)^{m}  \geq  
1 + m\eta + \frac{m(m-1)}{2}(\frac{2}{3})^{2-m}\eta^{2} := 1 + m\eta + a\eta^{2},
$$
and for the other direction we have
$$
(1 + \eta)^{m}  \leq  
1 + m\eta + \frac{m(m-1)}{2}(3)^{2-m}\eta^{2} := 1 + m\eta + b\eta^{2}.
$$
Thence
$\rho^{m} - \rho_{0}^{m}  =  \rho_{0}^{m}[(1 + \eta)^{m} - 1]
= \rho_{0}^{m}[(1 + \eta)^{m} - 1 - m\eta + m\eta]
\geq \rho_{0}^{m}[m\eta + a \eta^{2}]$.
So
$$
\int_{\mathbb T_{L}^{d}}(\rho^{m} - \rho_{0}^{m})dx
\geq a\rho_{0}^{m}\|\eta\|^{2}_{L^{2}(\mathbb T_{L}^{d})}
= a L^{d(2-m)}\|\rho - \rho_{0}\|^{2}_{L^{2}(\mathbb T_{L}^{d})},
$$
and similarly we have
$$
\int_{\mathbb T_{L}^{d}}(\rho^{m} - \rho_{0}^{m})dx
\leq b L^{d(2-m)}\|\rho - \rho_{0}\|^{2}_{L^{2}(\mathbb T_{L}^{d})}.
$$
Combining this with the bounds on the energy term, the stated claim 
has been established.
\end{proof}

Finally, in the next theorem, we prove that the free energy decays exponentially
to its minimum value.

\begin{theorem}\label{exp_conv_m<2}
Suppose the conclusions acquired in 
Corollary \ref{uniqueness_KM}
are satisfied
and suppose that $\theta$ is a sufficiently small number which is of the order of unity.
 Let $\rho(x,t)$ be the weak solution to Eq.\eqref{pde} on $[0,\infty)\times\mathbb{T}_L^d$, with non-negative initial data $\rho(x,0)\in L^\infty(\mathbb{T}_L^d)$ which integrates to 1.  Then $\mathcal{F}_\theta(\rho(t))$ decays exponentially to $\mathcal{F}_\theta(\rho_0)$. More precisely, 
\begin{equation}\label{free_energy_decay_m<2}
\mathcal F(\rho(\cdot, t)) - \mathcal F(\rho_{0})  \leq  C_1
\text{e}^{-\rho_{0}^{m-1}c^{\prime}t}
\end{equation}
for various constants $c^{\prime}$ and $C_1$.  Similarly for the $L^{2}$--norm of
$(\rho - \rho_{0})$ with a different prefactor.  
\end{theorem}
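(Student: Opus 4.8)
The strategy is to rerun the Grönwall argument of Theorem~\ref{exp_conv}, substituting for the two places where that proof exploited the homogeneity of $m=2$: the comparability of the energy gap with an $L^{2}$--norm, now supplied by Lemma~\ref{lemma_comparable}, and the term-by-term comparison of the dissipation with the energy in Fourier space, now replaced by a linearization of the pressure $\rho^{m-1}$ about $\rho_{0}$. Throughout, $\bar f:=L^{-d}\int_{\mathbb{T}_{L}^{d}}f\,dx$ denotes the spatial mean.

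\textbf{Step 1 (entering the good regime).} By Lemma~\ref{unif_conv_m<2} we have $\sup_{x}|\rho(\cdot,t)-\rho_{0}|\to 0$, so exactly as in Corollary~\ref{uniform_pos} there is a time $T>0$, depending on $\theta$, $V$, $L$ and $\rho(\cdot,0)$, such that for every $t>T$ one has simultaneously $\rho(x,t)>\rho_{0}/2$ for all $x$, $\|\rho(\cdot,t)-\rho_{0}\|_{L^{\infty}(\mathbb{T}_{L}^{d})}<\rho_{0}/2$ (so that Lemma~\ref{lemma_comparable} applies), and $\|\rho(\cdot,t)/\rho_{0}-1\|_{L^{\infty}(\mathbb{T}_{L}^{d})}<\varepsilon$ for a small $\varepsilon$ to be fixed in Step~2. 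On $[0,T]$ the free energy is bounded by its initial value through Lemma~\ref{decrease}, so it suffices to prove the stated decay for $t>T$ and then enlarge $C_{1}$.

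\textbf{Step 2 (coercivity of the dissipation).} For $t>T$, Remark~\ref{GHT} lets us read Eq.\eqref{f_decay_int} as the differential inequality $-\tfrac{d}{dt}g(t)\ge\tfrac{\rho_{0}}{2}\int_{\mathbb{T}_{L}^{d}}|\nabla\mu|^{2}\,dx$, where $g(t):=\mathcal{F}_{\theta}(\rho(\cdot,t))-\mathcal{F}_{\theta}(\rho_{0})$ and $\mu:=\tfrac{m}{m-1}\rho^{m-1}+\theta L^{d(2-m)}\rho*V$. Writing $\rho=\rho_{0}(1+\eta)$ with $\bar\eta=0$ and $\|\eta\|_{L^{\infty}}<\varepsilon$, a Taylor expansion of $(1+\eta)^{m-1}$ gives
\[
\mu=\mathrm{const}+L^{d(2-m)}\big[\,m(\rho-\rho_{0})+\theta(\rho-\rho_{0})*V\,\big]+\tfrac{m}{m-1}\rho_{0}^{m-1}R(\eta),\qquad |R(\eta)|\le C\eta^{2}.
\]
The Poincaré inequality on the torus gives $\|\nabla\mu\|_{L^{2}}^{2}=\|\nabla(\mu-\bar\mu)\|_{L^{2}}^{2}\ge(2\pi/L)^{2}\|\mu-\bar\mu\|_{L^{2}}^{2}$. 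Applying Parseval to the mean-zero linear part, together with $\|R(\eta)-\overline{R(\eta)}\|_{L^{2}}\le C\|\eta\|_{L^{\infty}}\|\eta\|_{L^{2}}$ and $\|\eta\|_{L^{2}}=\rho_{0}^{-1}\|\rho-\rho_{0}\|_{L^{2}}$ for the remainder, yields
\[
\|\mu-\bar\mu\|_{L^{2}}\ge L^{d(2-m)}\,(\delta-c\varepsilon)\,\|\rho-\rho_{0}\|_{L^{2}},\qquad \delta:=\min_{k\neq0}\big|\,m+\theta\hat{V}(k)\,\big|,
\]
with $c$ of order unity; here $\delta>0$ precisely because the smallness of $\theta$ forces $\theta<\theta^{\sharp}$, the general-$m$ analogue of Eq.\eqref{theta_sharp_definition}. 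Fixing $\varepsilon$ with $c\varepsilon\le\delta/2$, and recalling $\rho_{0}>\rho_{0}/2$ is used to pass from $\int\rho|\nabla\mu|^{2}$ to $\tfrac{\rho_{0}}{2}\|\nabla\mu\|_{L^{2}}^{2}$, we get $-\tfrac{d}{dt}g(t)\ge\tfrac{(2\pi)^{2}\delta^{2}}{16\,L^{2}}\,\rho_{0}\,L^{2d(2-m)}\,\|\rho-\rho_{0}\|_{L^{2}}^{2}$.

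\textbf{Step 3 (Grönwall and conclusion).} Feeding in the upper bound of Lemma~\ref{lemma_comparable}, $\|\rho-\rho_{0}\|_{L^{2}}^{2}\ge\beta^{-1}L^{-d(2-m)}g(t)$, and using the identity $\rho_{0}L^{d(2-m)}=L^{d(1-m)}=\rho_{0}^{m-1}$, the estimate of Step~2 chains to $-\tfrac{d}{dt}g(t)\ge c'\rho_{0}^{m-1}g(t)$ for $t>T$, with $c'$ of order $L^{-2}$. Integrating gives $g(t)\le g(T)\,\mathrm{e}^{-c'\rho_{0}^{m-1}(t-T)}$, which together with Step~1 is Eq.\eqref{free_energy_decay_m<2} after adjusting $C_{1}$. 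The $L^{2}$ statement follows from the lower bound of Lemma~\ref{lemma_comparable}, $\|\rho(\cdot,t)-\rho_{0}\|_{L^{2}}^{2}\le\alpha^{-1}L^{-d(2-m)}g(t)$, upon taking square roots (with the rate possibly halved, or preserved by a routine refinement). The delicate point throughout is Step~2: unlike $m=2$, where the dissipation is an exactly diagonal quadratic form comparable term by term to the energy, here $\rho^{m-1}$ is nonlinear, so one must linearize and verify that the quadratic remainder $R(\eta)$ cannot overcome the linear coercive contribution; this is exactly why the argument has to be confined to the regime $t>T$ in which $\|\rho/\rho_{0}-1\|_{L^{\infty}}$ is small, and why the resulting $T$ (hence $C_{1}$) is not explicit, inheriting the soft, compactness-based character of Lemma~\ref{unif_conv_m<2} and Corollary~\ref{uniqueness_KM}.
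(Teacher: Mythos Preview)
Your proof is correct, and Steps~1 and~3 coincide with the paper's argument. The genuine difference is in Step~2, where you and the paper bound the dissipation $\int\rho|\nabla\mu|^{2}$ from below by a multiple of $\rho_{0}^{m-1}L^{d(2-m)}\|\rho-\rho_{0}\|_{L^{2}}^{2}$ via different mechanisms.

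The paper proceeds more crudely: it applies the elementary inequality $|a+b|^{2}\ge\tfrac{1}{2}|a|^{2}-|b|^{2}$ to split $\nabla\mu$ into the pressure piece $\tfrac{m}{m-1}\nabla\rho^{m-1}$ and the interaction piece $\theta L^{d(2-m)}\nabla(\rho*V)$, then uses the pointwise comparability $\rho\sim\rho_{0}$ to replace $\rho\cdot\rho^{2(m-2)}=\rho^{2m-3}$ by $g\rho_{0}^{2m-3}$ for an order-unity constant $g$. This puts both pieces in the form $(\text{const})\cdot|\nabla\rho|^{2}$ and $(\text{const})\cdot|\nabla(\rho*V)|^{2}$, which pass directly to Fourier with summand $k^{2}|\hat\rho(k)|^{2}[g-\tfrac{3}{2}\theta^{2}|\hat V(k)|^{2}]$; positivity then requires $\theta$ small against $g$. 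No Taylor expansion, no remainder control, and the only use of $\|\rho/\rho_{0}-1\|_{L^{\infty}}<\tfrac{1}{2}$ is the pointwise power bound.

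Your route---Poincar\'e on $\nabla\mu$, then Taylor-linearize $\mu$ itself and absorb the quadratic remainder $R(\eta)$ using $\|\eta\|_{L^{\infty}}<\varepsilon$---is a bit more delicate but buys a sharper smallness condition: positivity of your coefficient $\delta=\min_{k\neq0}|m+\theta\hat V(k)|$ is exactly $\theta<\theta^{\sharp}$ in the general-$m$ sense, whereas the paper's condition $g>\tfrac{3}{2}\theta^{2}\max_{k}|\hat V(k)|^{2}$ is a more restrictive quadratic constraint on $\theta$. On the other hand, the paper's argument does not need the auxiliary parameter $\varepsilon$ nor the two-term Taylor remainder estimate, so it is marginally shorter and more robust. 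Both approaches land on the same differential inequality and invoke Lemma~\ref{lemma_comparable} identically.
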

\begin{proof}
According to Lemma \ref{unif_conv_m<2}, there exist some $T>0$ depending on $\theta, L, V$ and $\rho(\cdot,0)$, such that 
$|\rho(x,t) -\rho_0| < \frac{1}{2}\rho_{0}$ for all  $x\in\mathbb{T}_L^d, t>T$. Then for all $t_2> t_1>T$, 
we manipulate the integrand on the right hand side of 
Eq.(\ref{f_decay_int}) -- the lower bound on
$\mathcal{F}_\theta(\rho(\cdot , t_1))-\mathcal{F}_\theta(\rho(\cdot ,t_2))$:
\begin{align}
\int_{\mathbb T_{L}^{d}}\rho
|\nabla\frac{m}{m-1}\rho^{m-1} + \theta & L^{d(2-m)}\nabla  \rho* V|^{2}dx
\geq 
\notag
\\
&\int_{\mathbb T_{L}^{d}}\rho
\left [
\frac{1}{2}|\nabla\frac{m}{m-1}\rho^{m-1}|^{2}
- |\theta L^{d(2-m)}\nabla \rho* V|^{2}
\right ]dx  
\notag
\\
= & \int_{\mathbb T_{L}^{d}}
\left [
\frac{1}{2}m^{2}\rho^{2m-3}|\nabla \rho|^{2}
- \rho \theta^{2}L^{2d(2-m)}|\nabla (\rho*V)|^{2}
\right]dx
\notag
\\
\geq & \int_{\mathbb T_{L}^{d}}
\left[
g\rho_{0}^{2m-3}|\nabla \rho|^{2} - 
\frac{3}{2}\rho_{0}\theta^{2}L^{2d(2-m)}|\nabla (\rho*V)|^{2}
\right]dx
\end{align}
where the value of $g$ -- which is always of order unity -- 
depends on whether $2m-3$ is positive or not.
Note that all terms are proportional to 
$\rho_{0}^{2m - 3} = 
\rho_{0}^{m-1}L^{d(2-m)}$.

Going to Fourier modes, the final (spatial) integral in the above string becomes
$$
\rho_{0}^{m-1}L^{d(2-m)}\cdot
\frac{1}{L^{d}}\sum_{k}
k^{2}|\hat{\rho}(k)|^{2}[g - \frac{3}{2}\theta^{2}|\hat{V}(k)|^{2}]
$$
where, for sufficiently small $\theta$, we may assert that the summand is positive.  

We thus have
\begin{align}
\label{ZFD}
\mathcal{F}_\theta(\rho(\cdot , t_1))
& -
\mathcal{F}_\theta(\rho(\cdot ,t_2))
\geq
\notag
\\
&\rho_{0}^{m-1}
\beta
c^{\prime}
\int_{t_{1}}^{t_{2}}\int_{\mathbb T_{L}^{d}}L^{d(2-m)}(\rho - \rho_{0})^{2}dxdt
\hspace{2 pt}
\geq
\hspace{2 pt}
\rho_{0}^{m-1}c^{\prime}
\int_{t_{1}}^{t_{2}}
[\mathcal{F}_\theta(\rho(\cdot , t)) - \mathcal{F}_\theta(\rho_{0})]
dt
\end{align}
where in the above, $\beta$ is the constant from
Lemma \ref{lemma_comparable}
which has been conveniently absorbed into the definition of $c^{\prime}$:
$$
c^{\prime}\beta :=  \min_{k\neq 0}[k^{2}(g - \frac{3}{2}
\theta^{2}|\hat{V}(k)|^{2})]
$$
and in the final step we have used 
Lemma \ref{lemma_comparable}.

Note that Eq.\eqref{ZFD} has the same form as Eq.\eqref{integral_inequality}  therefore we can again treat it as a differential inequality as in the proof of Theorem \ref{exp_conv}.  We obtain that
$$
\mathcal F(\rho(\cdot, t)) - \mathcal F(\rho_{0})  \leq  C_1
\text{e}^{-\rho_{0}^{m-1}c^{\prime}t}.
$$
A further application of 
Lemma \ref{lemma_comparable} implies a similar result for the $L^{2}$--norm of
$(\rho - \rho_{0})$
and the proof is finished.
\end{proof}

\begin{remark}
Here as in the case $m = 2$, 
when $L$ is large, 
$c^{\prime} \propto L^{-2}$ and we 
have the large $L$ scaling of the rate proportional to
$L^{-(2 + d(m-1))}$ in agreement with a perturbative analysis.  However in this case, our arguments
do not provide agreement with the constant of proportionality.  
We also note that by Theorem \ref{holder} we have that $\rho(\cdot, t)$ is uniformly H\"older continuous in space and time for all $t\geq T$, where the H\"older coefficient and exponent depends on $\theta$, $L$ and $V$.  
Thus we can bound, the $L^{\infty}$--norm
of $\rho - \rho_{0}$ by some power of its $L^{2}$--norm.  
Hence the exponential convergence of $\|\rho-\rho_0\|_{L^2(\mathbb{T}_L^d)}$ implies the exponential convergence of $\|\rho-\rho_0\|_{L^\infty(\mathbb{T}_L^d)}$.
However 
a bound along these lines is ``even more'' non--optimal since the two norms should, presumably, differ by a factor of
$L^{d}$.
\end{remark}

%
%
%

\section{Appendix:  Proof of Lemma \ref{lemma}}\label{proof}

\begin{proof}[Proof of Lemma \ref{lemma}]
We will do the comparison between $\rho^-$ and $w$ first; the comparison between $\rho^+$ and $w$ can be done in the same way.  


First note that $w$ also satisfies Eq.\eqref{tilde_rho_pde} with $\Phi \equiv 0$, therefore the inequality Eq.\eqref{rho_rho-_rho+} also hold for $w$, namely
\begin{equation}
w - \rho^- \geq -C_1 a,
\end{equation}
where 
$C_1 $ depends on $m, d, M_1, M_2$.

We define $f:= w-\rho^-$, and our goal is to obtain an upper bound for $f$.  More precisely, we want to show there exists some constant $C$ and $\beta$ depending on $m,d,M_1,M_2$, such that $f(x,t) \leq Ca^\beta$ in $\tilde \Omega \times [1,2]$.

 Our strategy is as following.  First, we claim that 
\begin{equation}\label{int_f}
g(T):=\sup_{y\in\tilde \Omega }\int_{B(y,1)\cap \tilde \Omega} f(x,T) dx < C_0 a  \text{ for all }T\in[0,2],
\end{equation}
 where $C_0$ depends on $m, d, M_1, M_2$. We will prove this claim momentarily.  Once we have the claim, we know the space integral of $f(x,t)$ in any unit ball is of order $a$, for $0<t<2$.  To get $f(x,t)\leq O(a^\beta)$ for $t\in[1,2]$, it suffices to show $f$ is H\"older continuous in space
with exponent and constant that are uniform in time  
for all $t\in[1,2]$, which is indeed true, since Theorem 11.2 of \cite{dgv} guarantees this uniform 
H\"older continuity of $\rho^-$ and $w$ for $t\in[1,2]$.  

Now it suffices to prove our claim.   It is proved by writing both equations in weak form, choosing an appropriate test function and applying the Gronwall inequality.  By writing both Eq.\eqref{pme_minus} and Eq.\eqref{pme_v} in weak form and subtracting the two equations, we arrive at
\begin{equation}
\underbrace{\int_{\tilde \Omega} f(x,T)\varphi(x)dx}_{I_1}  =  \underbrace{\int_{\tilde \Omega} f(x,0)\varphi(x)dx}_{I_2} + \int_0^T \underbrace{\int_{\tilde \Omega} \big( w^m - \rho^-|\rho^-|^{m^- - 1}\big)\Delta \varphi(x) + Ma\varphi(x)~ dx}_{I_3}dt, \label{temptemp}
\end{equation} 
where $\varphi\in  C_0^\infty(\tilde \Omega )$ is a test function chosen as follows. For a fixed $T>0$, there exists $z\in \tilde \Omega $, such that the maximum of $\int_{B(y,1)\cap \tilde \Omega} f(x,T) dx$ is achieved at $z$.  We then define $$\varphi(x):= \mu*h^z(x),$$ where $\mu$ is a standard mollifier supported in $B(0,\frac{1}{10})$, and 
\begin{equation}
h^z(x) := \begin{cases} 1-|x-z|^2/2 & \text{for } |x-z|\leq1\\
(|x-z|-2)^2/2 & \text{for } 1<|x-z|\leq 2\\
0 & \text{for } |x-z|>2
\end{cases}
\end{equation}
For such $\varphi$, we have $0<\varphi<1$,
inside the ball $B(z,1)$
 and $\int_{\tilde\Omega} \varphi dx < |B(z,3)| < 6^d $.

To estimate $I_1$, note that $\varphi(x)\geq 1/3$ in $B(z,1)$, and $f(x,T) +C_1 a\geq 0$ in $\tilde \Omega$, which implies
\begin{eqnarray*}
I_1 &=& \int_{\tilde\Omega} (f(x,T)+C_1 a) \varphi(x) dx - \int_{\tilde\Omega} C_1 a \varphi(x) dx\\
&\geq& \frac{1}{3} \int_{B(z,1)\cap \tilde \Omega} (f(x,T)+C_1 a) dx - 6^d C_1 a\\
&\geq& \frac{g(T)}{3} - 6^d C_1 a.
\end{eqnarray*}

For $I_2$, since $f(x,0) = (\frac{m}{m^-})^{m^-} \tilde\rho(x,0)^{1-ca} - \tilde \rho(x,0),$
we would obtain $f(x,0)<C_2 a$, where $C_2$ depends on $m,\|\tilde \rho(\cdot, 0)\|_\infty$ and $c$, (hence depends on $m,d,M_1, M_2$), which yields
\begin{equation*}
I_2 \leq C_2 a \int_{\tilde\Omega} \varphi(x) dx \leq 6^d C_2 a.
\end{equation*}

Now we start to estimate $I_3$. Due to the definition of $m^-$ in Eq. \eqref{def_m*}, we have  $m^- - m \leq 2(m-1)ca$. Also,
we can derive some \emph{a priori} bound of $\rho^-(x,t)$ and $w(x,t)$ for $t\in[1,2]$, which depend on $m, d, M_1, M_2$.  Then we have
\begin{equation*} \Big|w^m - \rho^-|\rho^-|^{m^- - 1}  \Big | \leq C_3 |w-\rho^-| + C_4 a \quad \text{in } \tilde\Omega \times [0,2],
 \end{equation*}
 where $C_3, C_4$ depends on $m, d, M_1, M_2$. 
Together with the fact that $|\Delta \varphi|$ is bounded, in particular by $d$,
in $B(z,3)$ and vanishes outside of $B(z,3)$,  we obtain the following bound for $I_3$:
 \begin{eqnarray}
\nonumber I_3 &\leq& \int_{\tilde\Omega} (C_3|f|+ C_4 a)|\Delta \varphi| + Ma\varphi dx\\
\nonumber&\leq& dC_3\int_{B(z,3)\cap\tilde\Omega} |f(x,t)| dx + 6^d(d C_4 + M)a\\
&\leq& d C_3 \sum_{i=1}^{c_d} \int_{B(z+x_i,1)\cap\tilde\Omega} |f(x,t)| dx + 6^d(d C_4 + M)a, \label{estimate_I3}
\end{eqnarray} 
where in the last inequality we denote by $c_d$ the number such that $B(0,3)$ can be covered by $c_d$ numbers of balls of radius 1, centered at $x_1, \ldots x_{c_d}$ respectively.  Note that $c_d$ is a constant only depending on $d$. 

Finally, we wish to control $\int_{B(z+x_i,1)\cap\tilde\Omega} |f| dx$.  Note that $f\geq -C_1a$ implies $|f| \leq f+2C_1a$, which yields
\begin{eqnarray*}
\int_{B(z+x_i,1)\cap\tilde\Omega} |f(x,t)| dx &\leq& \int_{B(z+x_i,1)\cap\tilde\Omega} f dx + 2^d 2C_1a\\
&\leq& g(t) + 2^{d+1}C_1a
\end{eqnarray*}
Plugging it into Eq.\eqref{estimate_I3}, we obtain
$$I_3 \leq d C_3 c_d g(t) + (d C_3 c_d 2^{d+1}C_1 + 6^d d C_4 + 6^dM) a$$

By putting estimates of $I_1, I_2, I_3$ together, we have
\begin{equation*}
g(T) \leq C_5\int_0^T g(t)dt + C_6 a \quad\text{for }T\in[0,2]
\end{equation*}
where $C_5, C_6$ only depend on $m,d, M_1, M_2$. And for initial data, we have $g(0) \leq |B(0,1)| \sup_x f(x,0) \leq 2^d C_2 a$. By Gronwall inequality, we have $g(T) \leq C_0a$ for all $T\in[0,2]$, where $C_0$ only depends on $m,d,M_1, M_2$, hence our claim Eq.\eqref{int_f} is proved.
\end{proof}

\section*{Acknowledgements}
L.Chayes was partially supported by NSF grant DMS-0805486, I.Kim was partially supported by NSF grant DMS-0970072, and Y.Yao was partially supported by NSF grant DMS-0805486 and DMS-0970072.

\end{document}